\newcommand{\Z}{\mathbb{Z}}
\newcommand{\N}{\mathbb{N}}
\newcommand{\ff}{\mathbb{F}}
\newcommand{\Tr}{\operatorname{Tr}}
\newcommand{\sk}{\smallskip}
\newcommand{\msk}{\medskip}
\newtheorem{thm}{Theorem}[section]
\newtheorem{prop}[thm]{Proposition}
\newtheorem{lem}[thm]{Lemma}
\newtheorem{coro}[thm]{Corollary}
\theoremstyle{definition}
\newtheorem{rem}[thm]{Remark}
\newtheorem{exam}[thm]{Example}
\newtheorem{defi}[thm]{Definition}
\theoremstyle{remark}
\newtheorem{note}[thm]{Note}
\def\blue{\color{blue}}
\def\black{\color{black}}
\begin{document} \sloppy
\numberwithin{equation}{section}
\title[Spectral properties of GP-graphs of ($q^\ell+1$)-th powers]{Spectral properties of generalized Paley graphs \\ of ($q^\ell+1$)-th powers and applications}
\author{Ricardo A.\@ Podest\'a, Denis E.\@ Videla}
\dedicatory{\today}
\keywords{Generalized Paley graphs, spectrum, equienergy, Waring's problem, Ramanujan graphs}
\thanks{2010 {\it Mathematics Subject Classification.} Primary 05C25;\, Secondary 05C50, 05C75, 05E30, 11P05.}
\thanks{Partially supported by CONICET, FONCYT and SECyT-UNC}
\address{Ricardo A.\@ Podest\'a. FaMAF -- CIEM (CONICET), Universidad Nacional de C\'ordoba. 
	\newline Av.\@ Medina Allende 2144, Ciudad Universitaria, (5000), C\'ordoba, Argentina. \newline
	{\it E-mail: podesta@famaf.unc.edu.ar}}
\address{Denis E.\@ Videla. FaMAF -- CIEM (CONICET), Universidad Nacional de C\'ordoba. 
	\newline	Av.\@ Medina Allende 2144, Ciudad Universitaria, (5000), C\'ordoba, Argentina.
	\newline {\it E-mail: devidela@famaf.unc.edu.ar}}

\begin{abstract}
We consider a special class of generalized Paley graphs over finite fields, namely the Cayley graphs 
with vertex set $\ff_{q^m}$ and connection set the nonzero $(q^\ell+1)$-th powers in $\ff_{q^m}$, as well as their complements. 
We explicitly compute the spectrum and the energy of these graphs. As a consequence, the graphs turn out to be (with trivial exceptions) simple, connected, non-bipartite, integral and strongly regular, of pseudo or negative Latin square type. 
Using the spectral information we compute several invariants of these graphs. We exhibit infinitely many pairs of integral equienergetic non-isospectral graphs. 
As applications, on the one hand we solve Waring's problem over $\ff_{q^m}$ for the exponents 
$q^\ell+1$, for each $q$ and for infinitely many values of $\ell$ and $m$. We obtain that the Waring's number $g(q^\ell+1,q^m)=1$ or $2$, depending on $m$ and $\ell$, thus solving some open cases. On the other hand, we construct infinite towers of integral Ramanujan graphs in all characteristics. Finally, we give the Ihara zeta functions of these graphs. 
\end{abstract}

\maketitle

\section{Introduction} \label{sec:1}
Given a graph $\Gamma$ of $n$ vertices, $\lambda$ is an eigenvalue of $\Gamma$ if it is an eigenvalue of its 
adjacency matrix $A$. The \textit{spectrum} of the graph $\Gamma$ is the spectrum of $A$ counted with multiplicities 
(a multiset). Let $\lambda_1 \ge \cdots \ge \lambda_n$ be the eigenvalues of $\Gamma$ and let $d_i=m(\lambda_i)$ denotes the multiplicity of $\lambda_i$ for each $i=1,\ldots, n$. If $n=n_\Gamma$ is the number of distinct eigenvalues, it is usual to denote the spectrum by 
$$Spec(\Gamma) = \{ [\lambda_1]^{d_1}, [\lambda_2]^{d_2}, \ldots, [\lambda_n]^{d_n}\}$$ 
now with $\lambda_1 > \lambda_2 > \cdots > \lambda_n$. 
Two graphs are said \textit{isospectral} if they have the same spectrum.
Some structural properties of a graph $\Gamma$ can be read from its spectrum. In fact, if $\Gamma$ is $k$-regular then $\lambda_1=k$. Furthermore, $\Gamma$ is connected if and only if $d_1 =1$ and it is bipartite if and only if $\lambda_n=-k$. 
We will denote by $\bar \lambda_i$ and $\bar d_i = m(\bar \lambda_i)$ the eigenvalues and multiplicities of the complementary graph $\bar \Gamma$, respectively.
A graph $\Gamma$ is said to be \textit{integral} if $Spec(\Gamma)\subset \mathbb{Z}$. Generically, there are few integral graphs. In fact, the set of all $k$-regular connected integral graphs with fixed $k$ is finite for every $k$ (\cite{Cv}). 
A $k$-regular undirected connected graph is said to be 
\textit{Ramanujan} if
\begin{equation} \label{rama}
\lambda(\Gamma) := \max\limits_{1 \le i \le n} \{|\lambda_i|:  |\lambda_i| \ne k \} \le 2\sqrt{k-1}. 
\end{equation}
We remark that from a non-bipartite integral Ramanujan graph $\Gamma$, one can obtain a bipartite integral Ramanujan graph simply by considering the ``double'' of $\Gamma$, namely $\Gamma \otimes K_2$.

The paper has three main goals: ($a$) to introduce an interesting family of graphs and to study their structural and spectral properties in detail; ($b$) to produce pairs of integral equienergetic non-isospectral integral graphs; and ($c$) to construct infinite families of integral, non-bipartite, Ramanujan graphs. We will achieve these goals by considering a subclass of semiprimitive generalized Paley graphs over arbitrary finite fields. They will also have the extra property of being strongly regular graphs.

\subsubsection*{Outline and results.}
In Section \ref{sec:2} we consider a family of Cayley graphs $\Gamma_{q,m} (\ell) = X(\ff_{q^m},S_\ell)$ 
where $S_\ell$ is the set of nonzero $(q^\ell+1)$-th powers in $\ff_{q^m}^*$ with $\ell, m \in \Z$ satisfying $0 \le \ell <m$. 
They form a subclass of the generalized Paley graphs $X(\ff_{q}, \{x^k : x \in \ff_q\})$ (GP-graphs for short). 
In fact, they are a subfamily of the so called semiprimitive GP-graphs. When $m_\ell=\tfrac{m}{(m,\ell)}$ is odd, the family includes complete graphs $K_{q^m}$ when $q$ is even and classic Paley graphs $P(q^m)$ when $q$ is odd. 
In the case $m_\ell$ even, 
the graphs turn out to be generalized Paley graphs (as defined in \cite{LP}), with particular parameters. 
We focus on the case $m_\ell$ even and show that it is enough to take $\ell \mid m$. Hence, we consider the family 
$$\mathcal{G}_{q,m} = \{\Gamma_{q,m}(\ell) \, : \, 1 \le \ell \le \tfrac m2, \, \ell \mid m, \, \text{and } m_\ell=\tfrac{m}{\ell} \text{ even}\}.$$
We determine when these graphs are undirected and their regularity degrees (Proposition~\ref{prop Gamas}). 
We also study the structure of the lattice (poset) of the subgraphs for $q^m$ fixed (Proposition~\ref{teo subg}).

Section \ref{sec:3} is devoted to the computation of the spectra of these graphs. 
It turns out that they depend on some exponential sums associated to quadratic forms related to the powers $x^{q^\ell+1}$. 
Thus, we first recall quadratic forms $Q(x)$ of $m$ variables over finite fields $\ff_q$ 
and their invariants.
Then, in Lemma \ref{LsumT} we compute the exponential sums $T_{Q,a}$ in \eqref{TQ}. 
For quadratic forms 
$$Q_{\gamma, \ell}(x) = \Tr_{q^m/q}(\gamma x^{q^\ell+1}), \qquad  \gamma \in \ff_{q^m}^*,$$ 
we recall the known distribution of ranks and types due to Klapper (see Theorems \ref{Thmpar} and \ref{Thmimpar}).  
Using all these facts, in Theorem \ref{Spec Gamma} we compute the spectrum $Spec(\Gamma_{q,m}(\ell))$ of the graphs, obtaining explicit expressions for the eigenvalues and their multiplicities. 
For $\ell\ne \frac m2$ there are 3 distinct eigenvalues, while there are only 2 in the case $\ell =\frac m2$. In both cases the eigenvalues are integers.

Knowing the spectrum explicitly allows us to give some structural properties of the graphs. This is done in the next section, where we also consider the family $\bar{\mathcal{G}}_{q,m}$ of complementary graphs. 
In Proposition \ref{Spec complement} we compute the spectra of the complementary graphs, which are of course also integral.
Next, in Proposition \ref{properties}, we show that all the graphs considered are non-bipartite 
(except for $\Gamma_{2,2}(1) = 2K_2$, $\bar \Gamma_{2,2}(1) = C_4 = K_{2,2})$), mutually 
non-isospectral and connected (except when $\ell=\frac m2$, in which case it is a sum of copies of a complete graph, i.e.\@ 
$\Gamma_{q,m}(\frac m2) = q^{\frac m2} K_{q^{\frac m2}}$). 
Furthermore, we compute the number of closed walks of length $r$ for any $r$ 
(Corollary \ref{coro walks}).

All the graphs considered turn out to be strongly regular, this is the topic of Section~{\ref{sec:5}. 
For the  graphs $\Gamma_{q,m}(\ell)$ and $\bar \Gamma_{q,m}(\ell)$ we give the parameters $srg(v,k,e,d)$ as strongly regular graphs (Theorem \ref{prop srg}) as well as their intersection arrays as distance regular graphs (Corollary~\ref{coro array}).
In Proposition \ref{teo latin} we show that the connected graphs $\Gamma_{q,m}(\ell)$ in $\mathcal{G}_{q,m}$ (i.e.\@ for $\ell \ne \tfrac m2$) are of Latin square type; that is, pseudo Latin square graphs if $\frac 12 m_\ell$ is odd and negative Latin square graphs if $\frac 12 m_\ell$ is even.
Being strongly regular graphs they have diameter $\delta=2$ and girth $g=3$. We also study some other classic invariants. 
In Proposition \ref{prop invariants} we compute or give bounds for the chromatic, independence and clique numbers $\chi(\Gamma)$, $\alpha(\Gamma)$, $w(\Gamma)$, the isoperimetric constant $h(\Gamma)$ and the vertex and edge connectivities $\kappa(\Gamma)$ and 
$\lambda_2(\Gamma)$.

In Section \ref{sec:6} we study the energy of these graphs. In Proposition \ref{prop energy} we give the energies of $\Gamma_{q,m}(\ell)$ and $\bar \Gamma_{q,m}(\ell)$ for any $q,m,\ell$. In Proposition \ref{coro pairs} we exhibit four infinite families of pairs of equienergetic non-isospectral graphs involving some graphs in $\mathcal{G} \cup \bar{\mathcal{G}}$. Further, we show that all the graphs considered are hyperenergetic (with $\ell \ne \frac m2$) and do not have maximal or $k$-maximal energy with the unique exceptions of the complement of the Clebsch graph $\bar \Gamma_{2,4}(1)$ and the Clebsch graph $\Gamma_{2,4}(1)$, respectively (see Corollaries \ref{coro hyper} and \ref{comp clebsch}). However, in Proposition \ref{min Energy} we prove that the graphs are asymptotically $k$-maximal energetic.

%

In Section \ref{sec:8}, as a quite unexpected consequence, we obtain the answer to  Waring's problem for finite fields in some open cases. More precisely, we prove that the Waring number (see \eqref{WN}) for powers of the form $q^\ell+1$ is exactly 
$$g(q^{\ell}+1,q^m)=2$$ 
(or else  $g(q^{\ell}+1,2^m)=1$ if $m_\ell$ is odd), meaning that every element in $\ff_{q^m}$ can be written as a sum of at most two $(q^{\ell}+1)$-th powers. That is, for $a\in \ff_{q^m}$ there exists $x, y \in \ff_{q^m}$ such that $a=x^{q^\ell+1}+y^{q^\ell+1}$.

In the last section, we focus on the property of being Ramanujan. In Theorem \ref{Ram q23} we show that Ramanujan graphs in the family 
$\mathcal{G}_{q,m}$ can occur only in characteristics 2 and 3. Moreover, we construct 3 infinite families of Ramanujan graphs (with different degrees of regularity) over $\ff_2$, $\ff_3$ and $\ff_4$. 
In Tables \ref{tabla f2}--\ref{tabla f4} we give the parameters and the spectrum of the first three graphs in each of the families. 
The first graphs in each family are the Clebsch graph $\Gamma_{2,4}(1)=srg(16,5,0,2)$, the Brouwer-Haemers graph 
$\Gamma_{3,4}(1) = srg(81,20,1,6)$ and $\Gamma_{4,4}(1) = srg(256,51,2,12)$. 
On the other hand, every graph in $\bar{\mathcal{G}}_{q,m}$ is Ramanujan (also Theorem \ref{Ram q23}). 
Since a graph is Ramanujan if and only if the Ihara zeta function $\zeta_\Gamma(t)$ of $\Gamma$ satisfies the Riemann hypothesis in this context (\cite{Ih})}, in Proposition~\ref{ihara} we give an expression of $Z_\Gamma(t)$ for the graphs $\Gamma \in \mathcal{G}_{q,m} \cup \bar{\mathcal{G}}_{q,m}$. 
Finally, in Example \ref{ex zetas} we compute $\zeta_\Gamma(t)$ for the Clebsch and the Brouwer-Haemers graphs (and their complements) and also their complexities (the number of spanning trees).

\section{The generalized Paley graphs of $(q^\ell+1)$-th powers} \label{sec:2}
Given a group $G$ and a subset $S \subset G$ (called the connection set) the \textit{Cayley graph} $\Gamma=X(G,S)$ is the digraph with vertex set $G$ where two elements $g,h \in G$ form a directed edge or arc from $g$ to $h$ if $g^{-1}h \in S$ ($h-g \in S$ if $G$ is abelian), hence without multiple edges. If the identity $e_G \notin S$ then $\Gamma$ has no loops (hence, it is simple). If $S$ is symmetric, that is $S=S^{-1}$ (or $S=-S$ for $G$ abelian), then $\Gamma$ has undirected edges. In this case, $\Gamma$ is $k$-regular with $k=|S|$. Actually, $\Gamma$ is vertex-transitive. 
Finally, $\Gamma$ is connected if and only if $S$ is a generating set of $G$. 

For instance, the cycle and complete graphs in $n$-vertices are Cayley graphs: $C_n=X(\Z_n,\{1\})$ and 
$K_n = X(\Z_n,\Z_n \smallsetminus\{0\})$.
Another interesting examples using finite fields as vertex sets are given by the Paley graphs 
$P(q)=X(\ff_{q},S)$ with $S=\{x^2 : x \in \ff_{q}^*\}$ (notice that $P(q)$ is undirected if $q\equiv 1 \pmod 4$ and directed if $q \equiv 3 \pmod 4$), and by the generalized Paley graphs $X(\ff_q, R_k)$ with $\{x^k:x\in \ff_q^*\}$ (see \cite{PV}, \cite{PV2}, \cite{PV Equi3} for properties of these graphs in general).

\subsection{The graphs $\Gamma_{q,m}(\ell)$}
Let $q=p^s$ be a prime power with $s\ge 1$, $m \in \N$ and $\ell \in \N_0$. We will next consider a family of Cayley graphs $\Gamma_{q,m}(\ell)$
which will turn out to be a generalization of the graphs $K_{q^m}$ and $P(q^m)$.
Consider the set of $(q^\ell+1)$-th powers in $\ff_{q^m}^*$
\begin{equation} \label{Sqml}
S_\ell := S_{q,m}(\ell) = \{ x^{q^\ell+1} : x \in \ff_{q^m}^* \}.
\end{equation}

\begin{defi} \label{def Sl}
Considering $\ff_{q^m}$ additively, we define the Cayley graph 
\begin{equation} \label{gamaml}
\Gamma_{q,m}(\ell) = X(\ff_{q^m}, S_\ell). 
\end{equation}
We call it the \textit{generalized Paley graph of $(q^\ell+1)$-th powers}. 
Notice that if $\ell=0$, we obtain the Paley graph $P(q^m)$. 
We will denote the set of edges of $\Gamma_{q,m}(\ell) $ by $E_{q,m}(\ell) $, or simply by $E_\ell$. 
It is enough to consider $0\le \ell\le m-1$
since $x^{q^m-1}=1$ implies that $S_{km+\ell} = S_{\ell}$ for every $k$. 
\end{defi}

Recall that if $m\in \Z$, the $2$-adic valuation of $m$, denoted by $v_2(m)$, is the power of $2$ 
in the prime factorization of $m$.
We will need the following result, whose proof is elementary. 
\begin{lem} \label{mcd}
Let $m$ and $\ell$ be integers and $q$ a prime power. Then,
$$(q^m-1, q^\ell+1) = \left\{ \begin{array}{cl} 
q^{(m,\ell)}+1, & \qquad \text{if }  v_{2}(m)> v_2(\ell),  \\[1.25mm]
\hfil 2, & \qquad \text{if $v_{2}(m) \leq v_{2}(\ell),$ with $q$ odd},  \\[1.25mm]
\hfil 1, & \qquad \text{if $v_{2}(m) \leq v_{2}(\ell),$ with $q$ even}.
\end{array} \right.$$ 
\end{lem}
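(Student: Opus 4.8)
The plan is to reduce everything to the standard identity $(q^a-1,q^b-1)=q^{(a,b)}-1$ together with the observation $q^\ell+1\mid q^{2\ell}-1$. Put $d=(q^m-1,q^\ell+1)$; we may assume $d\ge 2$, the case $d=1$ being trivial. Since $d\mid q^m-1$ we have $(d,q)=1$, so $q$ is invertible modulo $d$; let $t$ denote its order in $(\Z/d\Z)^\times$. From $q^m\equiv 1\pmod d$ we get $t\mid m$, and from $q^\ell\equiv -1\pmod d$ we get $q^{2\ell}\equiv 1\pmod d$, hence $t\mid 2\ell$. Moreover, when $d>2$ one has $-1\not\equiv 1\pmod d$, so $t\nmid\ell$; combined with $t\mid 2\ell$ this forces $v_2(t)=v_2(\ell)+1$.

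\emph{Case $v_2(m)\le v_2(\ell)$.} If $d>2$, the relations just obtained give $v_2(\ell)+1=v_2(t)\le v_2(m)\le v_2(\ell)$, a contradiction; hence $d\le 2$. Finally, if $q$ is even then $q^m-1$ is odd, forcing $d=1$, while if $q$ is odd then $q^m-1$ and $q^\ell+1$ are both even, forcing $d=2$. This settles the last two lines.

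\emph{Case $v_2(m)>v_2(\ell)$.} Set $g=(m,\ell)$ and write $\ell=g\ell_1$, $m=gm_1$ with $(\ell_1,m_1)=1$. A short computation with $2$-adic valuations shows that here $v_2(g)=v_2(\ell)$, so that $\ell_1$ is odd, $m_1$ is even, and $(2\ell,m)=2g$. For the inclusion $q^g+1\mid d$: since $\ell_1$ is odd, $q^g+1\mid (q^g)^{\ell_1}+1=q^\ell+1$, and since $2g\mid m$ we have $q^g+1\mid q^{2g}-1\mid q^m-1$. For the reverse inclusion, $d\mid q^\ell+1\mid q^{2\ell}-1$ and $d\mid q^m-1$ give $d\mid q^{(2\ell,m)}-1=q^{2g}-1$; then $(q^g)^2\equiv 1\pmod d$ together with $\ell_1$ odd yields $q^\ell=(q^g)^{\ell_1}\equiv q^g\pmod d$, so from $q^\ell\equiv -1$ we get $d\mid q^g+1$. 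Therefore $d=q^g+1=q^{(m,\ell)}+1$.

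There is no essential obstacle; the only points requiring care are the purely elementary $2$-adic bookkeeping — verifying that $v_2(m)>v_2(\ell)$ implies $v_2(g)=v_2(\ell)$ and $(2\ell,m)=2g$, and that $t\mid 2\ell$, $t\nmid\ell$ force $v_2(t)=v_2(\ell)+1$. The governing idea is simply to replace $q^\ell+1$ by $q^{2\ell}-1$ and to control the $2$-part of the order of $q$ modulo $d$ by the hypothesis on $v_2(m)$ versus $v_2(\ell)$.
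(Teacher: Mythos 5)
Your proof is correct. Note that the paper itself gives no argument for this lemma (it is stated with the remark that its proof is elementary), so there is nothing to compare against; your write-up supplies exactly the kind of elementary argument being alluded to. All the key steps check out: the reduction of $q^\ell+1$ to $q^{2\ell}-1$ combined with $(q^a-1,q^b-1)=q^{(a,b)}-1$, the observation that $t\mid 2\ell$ and $t\nmid\ell$ force $v_2(t)=v_2(\ell)+1$ (which, with $t\mid m$, rules out $d>2$ when $v_2(m)\le v_2(\ell)$), the parity bookkeeping $v_2(g)=v_2(\ell)$, $\ell_1$ odd, $m_1$ even, $(2\ell,m)=2g$ when $v_2(m)>v_2(\ell)$, and the two divisibilities $q^g+1\mid q^\ell+1$ (since $\ell_1$ is odd) and $q^g+1\mid q^{2g}-1\mid q^m-1$ together with $d\mid q^{2g}-1$ and $q^\ell\equiv q^g\pmod d$ giving $d=q^g+1$. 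The opening remark ``we may assume $d\ge2$'' is harmless but unnecessary, since your second case never uses it and your first case argues by contradiction from $d>2$.
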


We now study the basic properties of $S_\ell$.  
For integers $m,\ell$, we will use the notation 
\begin{equation*} \label{ml}
	m_\ell = \tfrac{m}{(m,\ell)}.
\end{equation*}
We begin by showing that $S_\ell=S_{(m,\ell)}$ for $m_\ell$ even.

\begin{lem} \label{Sls}
Let $m,\ell$ be non-negative integers.
The subset $S_\ell$, defined in \eqref{Sqml}, is a multiplicative subgroup of $\ff_{q^m}^*$ and we have:
\begin{enumerate}[$(a)$] \setlength\itemsep{1mm}
\item If $q$ is even, then $S_\ell = \ff_{q^m}^*$ if $m_\ell$ is odd or $S_\ell = S_{(m,\ell)}$ if $m_\ell$ is even. 

\item If $q$ is odd, then $S_\ell = S_0$ if $m_\ell$ is odd or $S_\ell = S_{(m,\ell)}$ if $m_\ell$ is even. 
\end{enumerate}
Moreover, if $m_\ell$ is odd we have that $|S_0| = \frac{q^m-1}2$ for $q$ odd and $|S_\ell| = q^m-1$, $\ell\ne 0$, for $q$ even, while $|S_{(m,\ell)}| = \frac{q^m-1}{q^{(m,\ell)}+1}$ if $m_\ell$ is even. 
\end{lem}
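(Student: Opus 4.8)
The plan is to work entirely inside the cyclic group $\ff_{q^m}^*$, whose order is $q^m-1$. First I would note that $S_\ell$ is exactly the image of the group homomorphism $x \mapsto x^{q^\ell+1}$ on $\ff_{q^m}^*$, hence a subgroup, and that its order is $\frac{q^m-1}{(q^m-1,\,q^\ell+1)}$ since in a cyclic group of order $n$ the image of the $d$-th power map has order $n/(n,d)$. Thus everything reduces to evaluating the gcd $(q^m-1, q^\ell+1)$, which is precisely what Lemma \ref{mcd} provides in terms of the comparison between $v_2(m)$ and $v_2(\ell)$.

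Next I would translate the three cases of Lemma \ref{mcd} into the language of $m_\ell = m/(m,\ell)$. The key observation is that $v_2(m) > v_2(\ell)$ is equivalent to $m_\ell$ being even, and $v_2(m) \le v_2(\ell)$ is equivalent to $m_\ell$ being odd: indeed $v_2(m_\ell) = v_2(m) - v_2((m,\ell)) = v_2(m) - \min\{v_2(m),v_2(\ell)\}$, which is positive exactly when $v_2(m) > v_2(\ell)$. With this dictionary in hand, Lemma \ref{mcd} gives $(q^m-1,q^\ell+1) = q^{(m,\ell)}+1$ when $m_\ell$ is even; $(q^m-1,q^\ell+1)=1$ when $m_\ell$ is odd and $q$ even; and $(q^m-1,q^\ell+1)=2$ when $m_\ell$ is odd and $q$ odd. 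Dividing $q^m-1$ by these values yields immediately $|S_\ell| = \frac{q^m-1}{q^{(m,\ell)}+1}$, $|S_\ell|=q^m-1$, and $|S_\ell| = \frac{q^m-1}{2}$ respectively, which establishes the "moreover" clause about cardinalities.

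It then remains to identify $S_\ell$ with the named subgroups. In a cyclic group there is a \emph{unique} subgroup of each order dividing the group order, so it suffices to check that the claimed subgroups have the right orders. For $q$ even and $m_\ell$ odd, $|S_\ell| = q^m-1 = |\ff_{q^m}^*|$ forces $S_\ell = \ff_{q^m}^*$. For $q$ odd and $m_\ell$ odd, $S_0 = \{x^2 : x\in \ff_{q^m}^*\}$ has order $\frac{q^m-1}{(q^m-1,2)} = \frac{q^m-1}{2}$ (as $q^m-1$ is even), matching $|S_\ell|$, so $S_\ell = S_0$. For $m_\ell$ even (either parity of $q$), I apply the same uniqueness argument to $S_{(m,\ell)}$: since $(m,\ell)$ divides $m$ with $m/(m,\ell)$ even, we have $v_2(m) > v_2((m,\ell))$, i.e. $m_{(m,\ell)} = m/(m,\ell)$ has the same even-ness, so by the formula just derived $|S_{(m,\ell)}| = \frac{q^m-1}{q^{((m,\ell),(m,\ell))}+1} = \frac{q^m-1}{q^{(m,\ell)}+1} = |S_\ell|$, and uniqueness gives $S_\ell = S_{(m,\ell)}$.

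The only genuine subtlety — the "main obstacle" — is the bookkeeping in the last step: one must be careful that $(m,\ell)$ plays the role of "$\ell$" correctly when reapplying the gcd computation, i.e. that $(q^m-1, q^{(m,\ell)}+1) = q^{(m,\ell)}+1$, which holds because $q^{(m,\ell)}+1 \mid q^m-1$ precisely when $m/(m,\ell)$ is even. Everything else is a direct consequence of the cyclic structure of $\ff_{q^m}^*$ and Lemma \ref{mcd}; no further computation is needed.
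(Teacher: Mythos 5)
Your proposal is correct and follows essentially the same route as the paper: both reduce everything to the cyclic structure of $\ff_{q^m}^*$, evaluate $(q^m-1,q^\ell+1)$ via Lemma \ref{mcd} after translating the $2$-adic condition into the parity of $m_\ell$, and identify $S_\ell$ with the named subgroup using the uniqueness of subgroups of a given order in a cyclic group. The only cosmetic difference is that the paper writes $S_\ell=\langle\alpha^{(q^m-1,q^\ell+1)}\rangle$ directly, whereas you match cardinalities (applying Lemma \ref{mcd} a second time to $(m,\ell)$) before invoking uniqueness.
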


\begin{proof}
$S_\ell$ is a multiplicative subgroup of $\ff_{q^m}^*$ since $x^{q^\ell+1} y^{q^\ell+1} = (xy)^{q^\ell+1}$. 
Now, if $\alpha$ is a primitive element of $\mathbb{F}_{q^m}$ then $S_\ell = \langle \alpha ^{q^{\ell}+1} \rangle$. Thus, $S_\ell$ 
is a cyclic subgroup of $\mathbb{F}_{q^m}^*$ of order 
$t=\frac{q^m-1}{(q^m-1, \, q^{\ell}+1)}$. 
Since the subgroup generated by $\alpha^{(q^m-1, \, q^{\ell}+1)}$ has the same order than $S_{\ell}$, and every cyclic group of order $n$ has a unique subgroup of order $d$ for each $d \mid n$, we get that 
	$$S_\ell = \langle \alpha^{(q^m-1, \, q^{\ell}+1)} \rangle.$$

Now, the condition $v_2(m) > v_2(\ell)$ implies that $m$ and $m_\ell$ are even. Conversely, if $m_\ell$ is even then $m$ is even and $v_2(m)>v_2(\ell)$. 
Thus, by Lemma \ref{mcd} we have that $(q^m-1, q^{\ell}+1) = q^{(m,\ell)}+1$ if $m_{\ell}$ is even for any $q$ 
and $(q^m-1, q^{\ell}+1) = 2$ (resp.\@ $1$) if $m_{\ell}$ odd for $q$ odd (resp.\@ even). 
Hence, $S_{\ell}=\{x^{q^{(m,\ell)}+1}: x\in\mathbb{F}_{q^m}^*\}$
for $m_{\ell}$ even and $S_{\ell}=\{x: x\in\mathbb{F}_{q^m}^*\}$ or $S_{\ell}=\{x^2 : x\in\mathbb{F}_{q^m}^*\}=S_0$  
for $m_{\ell}$ odd, depending on whether $q$ is even or odd respectively, and thus the result follows. 
\end{proof}

We now give conditions for the symmetry of $S_\ell$.
\begin{lem} \label{simetrico}
Let $S_\ell=S_{q,m}(\ell)$ 
be as in \eqref{Sqml}. Then, $S_\ell$ is symmetric if and only if $-1\in S_\ell$.
Moreover, we have:
\begin{enumerate}[$(a)$] \setlength\itemsep{1mm}
\item If $q$ is even, then $S_\ell$ is symmetric (for every $m$ and $\ell$). 

\item If $q$ is odd, then $S_\ell$ is symmetric if $m_\ell$ is odd and $q^m\equiv 1 \pmod 4$ or else if $m_\ell$ is even.
\end{enumerate}
In these cases, $\Gamma_{q,m}(\ell)$ is undirected.
\end{lem}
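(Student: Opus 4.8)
The plan is to reduce the whole statement to the single question whether $-1\in S_\ell$. Since $S_\ell$ is a multiplicative subgroup of $\ff_{q^m}^*$ by Lemma~\ref{Sls}, the set $-S_\ell=(-1)\cdot S_\ell$ is a coset of $S_\ell$, and a coset of a subgroup equals the subgroup exactly when its representative lies in it; hence $S_\ell=-S_\ell$ if and only if $-1\in S_\ell$. (The forward implication also follows at once from $1\in S_\ell$, which forces $-1=(-1)\cdot 1\in -S_\ell=S_\ell$.) This gives the first assertion. Because $\ff_{q^m}^*$ is cyclic, the remaining task is to decide when its element $-1$ lies in $S_\ell$; when $q$ is odd, $-1$ is the unique element of order $2$, and a (cyclic) subgroup $H$ of a cyclic group of even order contains that element precisely when $|H|$ is even, so it suffices to determine the parity of $|S_\ell|$.

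For item (a), if $q$ is even then $\operatorname{char}\ff_{q^m}=2$, so $-1=1=1^{q^\ell+1}\in S_\ell$ and $S_\ell$ is symmetric for all $m,\ell$; nothing more is needed.

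For item (b) I would use the explicit description of $S_\ell$ supplied by Lemma~\ref{Sls} and split on the parity of $m_\ell$. If $m_\ell$ is odd, then $S_\ell=S_0$ is the group of nonzero squares, of order $\tfrac{q^m-1}{2}$, which is even precisely when $q^m\equiv 1\pmod 4$; this is exactly the classical fact that $-1$ is a square in $\ff_{q^m}$ iff $q^m\equiv 1\pmod 4$, so $S_\ell$ is symmetric under that hypothesis. If $m_\ell$ is even, then $S_\ell=S_{(m,\ell)}$ has order $\tfrac{q^m-1}{q^{(m,\ell)}+1}$; writing $e=(m,\ell)$ and $m_\ell=2t$, I would factor $q^m-1=(q^{et}-(-1)^t)(q^{et}+(-1)^t)$, note that $q^e\equiv -1\pmod{q^e+1}$ forces $q^e+1$ to divide the factor $q^{et}-(-1)^t$, and observe that the complementary factor $q^{et}+(-1)^t$ is even since $q$ is odd. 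Hence $2(q^e+1)\mid q^m-1$, so $|S_\ell|=\tfrac{q^m-1}{q^{(m,\ell)}+1}$ is even and $-1\in S_\ell$; thus $S_\ell$ is symmetric whenever $m_\ell$ is even.

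To conclude, in every case in which $S_\ell$ is symmetric one also has $0\notin S_\ell\subset\ff_{q^m}^*$, so the Cayley graph $\Gamma_{q,m}(\ell)=X(\ff_{q^m},S_\ell)$ has no loops and all its edges are undirected, i.e.\ it is simple, as recalled in the preliminaries on Cayley graphs. The only place demanding an actual (and very short) computation is the parity of $\tfrac{q^m-1}{q^{(m,\ell)}+1}$ for $m_\ell$ even; I expect that divisibility bookkeeping to be the main, and rather mild, obstacle, everything else being formal from Lemma~\ref{Sls} and the cyclicity of $\ff_{q^m}^*$.
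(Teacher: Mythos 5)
Your proof is correct, and its overall skeleton matches the paper's: reduce symmetry to the single condition $-1\in S_\ell$, settle characteristic $2$ trivially, use the classical criterion for $-1$ being a square when $m_\ell$ is odd, and in the key case $m_\ell$ even establish the divisibility $2(q^{(m,\ell)}+1)\mid q^m-1$. Where you genuinely diverge is in how that divisibility is obtained and how it is converted into $-1\in S_\ell$. The paper writes $q^m-1=2t(q^{(m,\ell)}+1)$ and exhibits an explicit witness $y=\alpha^t$ of order $2(q^{(m,\ell)}+1)$ with $y^{q^{(m,\ell)}+1}=-1$; you instead invoke the structural fact that in the cyclic group $\ff_{q^m}^*$ the element $-1$ is the unique element of order $2$, hence lies in a subgroup exactly when that subgroup has even order, so everything reduces to the parity of $|S_\ell|=\tfrac{q^m-1}{q^{(m,\ell)}+1}$ (and your coset formulation of ``symmetric iff $-1\in S_\ell$'' is the same content as the paper's one-line computation). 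Your derivation of $2(q^{(m,\ell)}+1)\mid q^m-1$ via $q^m-1=\bigl(q^{et}-(-1)^t\bigr)\bigl(q^{et}+(-1)^t\bigr)$ with $e=(m,\ell)$, $m_\ell=2t$, is in fact cleaner than the paper's: the paper asserts that $q^{(m,\ell)}+1$ and $q-1$ are coprime, which is literally false for odd $q$ (both are even, their gcd is $2$), though the intended conclusion survives; your factorization sidesteps that entirely. What the paper's route buys is an explicit element realizing $-1$ as a $(q^{(m,\ell)}+1)$-th power, which is mildly more constructive; what yours buys is a uniform parity criterion that handles the $m_\ell$ odd and $m_\ell$ even cases by one and the same principle and avoids the shaky coprimality step.
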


\begin{proof}
Clearly, $1\in S_\ell$, so if $S_\ell$ is symmetric then $-1\in S_\ell$.  
On the other hand, if there is some $y\in\mathbb{F}_{q^m}^*$ satisfying $y^{q^\ell+1}=-1$, then we have 
$-x^{q^\ell+1} = x^{q^\ell+1}y^{q^\ell+1} = (xy)^{q^\ell+1} \in S_\ell$ for any $x\in\mathbb{F}_{q^m}^*$,
and thus $S_{\ell}$ is symmetric.

In even characteristic $S_{\ell}=-S_{\ell}$, since $-x=x$ for every $x\in \ff_{q^m}^*$, showing (\textit{a}). 
To prove (\textit{b}), 
let $q$ be odd and $m\ge 1$, $\ell\ge 0$ integers. 
If $m_\ell$ is odd, then $S_\ell = S_0 = \{x^2:x\in \ff_{q^m}^* \}$ by the previous lemma.	In this case, $-1$ is a square in $\ff_{q^m}$ if and only if $q^m\equiv 1 \pmod 4$. 
Now, suppose that $m_\ell$ is even. As in the proof of Lemma \ref{Sls} we have 
$S_{\ell} = \langle \alpha^{q^{(m,\ell)}+1}\rangle$
for any primitive element $\alpha$ of $\mathbb{F}_{q^m}$. 
Therefore, it suffices to prove that there is some $y\in\mathbb{F}_{q^m}^*$ such that $y^{q^{(m,\ell)}+1}+1=0$.
We claim that $2(q^{(m,\ell)}+1)\mid q^m-1$.
In fact, we have that $q^m-1 = (q^{m-1}+\cdots+q^2+q+1)(q-1)$ and since $q^{(m,\ell)}+1$ and $q-1$ are coprime then 
	$$q^{(m,\ell)}+1 \mid q^{m-1}+\cdots+q^2+q+1.$$ 
Also, since $q$ is odd we have $2\mid q-1$. Thus, $2(q^{(m,\ell)}+1)\mid q^m-1$. 
Hence, there exists some positive integer $t$ such that 
$q^m-1=2t(q^{(m,\ell)}+1)$.
Let $y=\alpha^t$. Note that $y^{2(q^{(m,\ell)}+1)}=1$, and hence 
$y^{q^{(m,\ell)}+1}=\pm 1$. 
Since $\alpha$ is a primitive element, the order of  $y$ is $2(q^{(m,\ell)}+1)$. Thus, we have  
$y^{q^{(m,\ell)}+1}=-1$.
In this way, $S_{\ell}=-S_{\ell}$ as we wanted to see.
\end{proof}

Summing up, we have the following description of the graphs considered.
\begin{prop} \label{prop Gamas}
For $q$ a prime power and $0\le \ell \le m-1$ we have  
\begin{equation} \label{Gamas}
\Gamma_{q,m}(\ell) = \begin{cases}
K_{q^m}, 					  	 & \qquad \text{if $m_\ell$ odd, $q$ even,} \\[.5mm]
\Gamma_{q,m}(0)=P(q^m), 		 & \qquad \text{if $m_\ell$ odd, $q$ odd,} \\[.5mm]
\Gamma_{q,m}((m,\ell)),			 & \qquad \text{if $m_\ell$ even (any $q$).} 
\end{cases}
\end{equation}
Thus, $\Gamma_{q,m}(\ell)$ is undirected, except in the case $\ell=0$ and $q\equiv 3 \pmod 4$. Also, $\Gamma_{q,m}(\ell)$ is $k$-regular, with $k=q^m-1, \frac{q^m-1}2$ or $\frac{q^m-1}{q^\ell+1}$, respectively.
\end{prop}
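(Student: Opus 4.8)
The plan is to assemble Proposition~\ref{prop Gamas} as a direct bookkeeping of the three preceding lemmas: Lemma~\ref{mcd} (the gcd computation), Lemma~\ref{Sls} (the identification $S_\ell = S_{(m,\ell)}$ or $S_0$ or $\ff_{q^m}^*$, together with the cardinality of $S_\ell$), and Lemma~\ref{simetrico} (symmetry of $S_\ell$, hence simplicity of the Cayley graph). Nothing genuinely new has to be proved; the content is organizing the case split cleanly according to the parity of $m_\ell$ and the parity of $q$.

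First I would dispose of the identity \eqref{Gamas}. If $m_\ell$ is odd and $q$ is even, Lemma~\ref{Sls}(a) gives $S_\ell = \ff_{q^m}^*$, so $\Gamma_{q,m}(\ell) = X(\ff_{q^m}, \ff_{q^m}^*) = K_{q^m}$. If $m_\ell$ is odd and $q$ is odd, Lemma~\ref{Sls}(b) gives $S_\ell = S_0 = \{x^2 : x \in \ff_{q^m}^*\}$, which is by definition the connection set of the Paley graph $P(q^m)$; note $S_0 = S_{q,m}(0)$ in our notation since $q^0+1=2$, so $\Gamma_{q,m}(\ell) = \Gamma_{q,m}(0) = P(q^m)$. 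If $m_\ell$ is even (for arbitrary $q$), Lemma~\ref{Sls} gives $S_\ell = S_{(m,\ell)}$, hence $\Gamma_{q,m}(\ell) = X(\ff_{q^m}, S_{(m,\ell)}) = \Gamma_{q,m}((m,\ell))$. This is the whole of \eqref{Gamas}.

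Next, simplicity. By Lemma~\ref{simetrico}, $\Gamma_{q,m}(\ell)$ is simple exactly when $S_\ell$ is symmetric, and that lemma lists the cases: always when $q$ is even, and when $q$ is odd precisely when $m_\ell$ is even or ($m_\ell$ odd and $q^m \equiv 1 \bmod 4$). The only way $\Gamma_{q,m}(\ell)$ fails to be simple is therefore $q$ odd, $m_\ell$ odd, and $q^m \equiv 3 \bmod 4$. Since $m_\ell$ odd forces $v_2(m) \le v_2(\ell)$, in particular $m$ is odd (recall $v_2(m)>v_2(\ell)$ would make $m_\ell$ even), so $q^m \equiv q \bmod 4$ for $q$ odd, and the condition $q^m\equiv 3\bmod 4$ reduces to $q \equiv 3 \bmod 4$. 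In that case $S_\ell = S_0$ and we are looking at the directed Paley graph $P(q^m)$ with $q^m \equiv 3 \bmod 4$, as claimed. Finally, for the regularity degree, since $\Gamma = X(\ff_{q^m}, S_\ell)$ with $S_\ell$ symmetric is $|S_\ell|$-regular, I read off $|S_\ell|$ from the last sentence of Lemma~\ref{Sls}: $|S_\ell| = q^m-1$ when $m_\ell$ odd, $q$ even; $|S_0| = \tfrac{q^m-1}{2}$ when $m_\ell$ odd, $q$ odd; and $|S_{(m,\ell)}| = \tfrac{q^m-1}{q^{(m,\ell)}+1}$ when $m_\ell$ even (which, once we have replaced $\ell$ by $(m,\ell)$ as in \eqref{Gamas}, reads $\tfrac{q^m-1}{q^\ell+1}$ for the reduced exponent).

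There is essentially no obstacle here; the only point requiring a moment's care is the bookkeeping of when $q^m \equiv 3 \bmod 4$ coincides with $q \equiv 3 \bmod 4$ (handled above via $m$ odd), and making sure the reindexing $\ell \leadsto (m,\ell)$ is applied consistently so that the stated degree $\tfrac{q^m-1}{q^\ell+1}$ matches $|S_{(m,\ell)}|$. So the proof is a two-line invocation of the three lemmas plus the standard fact that a Cayley graph on a symmetric connection set of size $k$ is $k$-regular.
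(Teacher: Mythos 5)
Your overall route is the same as the paper's: the proposition is proved there precisely by citing Lemma \ref{Sls} for the identification \eqref{Gamas} and for the regularity degrees, and Lemma \ref{simetrico} for simplicity. Your assembly of the three lemmas, including the observation that $S_0=S_{q,m}(0)$ because $q^0+1=2$ and the reindexing $\ell\leadsto(m,\ell)$ needed so that the stated degree $\tfrac{q^m-1}{q^\ell+1}$ matches $|S_{(m,\ell)}|$, coincides with the paper's (much terser) argument.

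One intermediate deduction in your simplicity discussion is false, though: $m_\ell$ odd does \emph{not} force $m$ odd. The condition $m_\ell$ odd is $v_2(m)\le v_2(\ell)$, which is perfectly compatible with $m$ even; for instance $m=6$, $\ell=2$ gives $m_\ell=3$ odd with $m$ even. Consequently your assertion that, when $m_\ell$ is odd and $q$ is odd, the condition $q^m\equiv 3\pmod 4$ ``reduces to'' $q\equiv 3\pmod 4$ is not an equivalence: if $m$ is even then $q^m\equiv 1\pmod 4$ and the graph is simple regardless of $q\bmod 4$. The conclusion you need nevertheless survives, because the non-symmetric case of Lemma \ref{simetrico} is ($q$ odd, $m_\ell$ odd, $q^m\equiv 3\pmod 4$), and $q^m\equiv 3\pmod 4$ by itself forces both $m$ odd and $q\equiv 3\pmod 4$; so the only directed graphs in the family are the Paley graphs $P(q^m)$ with $q\equiv 3\pmod 4$ and $m$ odd, which is the exceptional case intended in the statement (and stated just as loosely in the paper's own proof, which says ``$q\equiv 3$ mod $4$ with $m_\ell$ odd''). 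Replacing the sentence ``$m_\ell$ odd forces \dots\ $m$ is odd'' by this one-line observation repairs the slip; everything else in your write-up is correct and matches the paper's proof.
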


\begin{proof}
By Lemma \ref{Sls}, if $m_\ell$ is odd, then $\Gamma_{q,m}(\ell)$ is the complete graph $K_{q^m}$ when $q$ is even and it is the Paley graph $P(q^m)$ when $q$ is odd. By Lemma \ref{simetrico}, $\Gamma_{q,m}(\ell)$ is an undirected graph, except in the case 
$q \equiv 3 \pmod 4$ with $m_\ell$ odd. The remaining assertion follows from Lemma \ref{Sls} again. 
\end{proof}

By \eqref{Gamas}, in the more interesting case when $m_\ell$ is even, it is enough to consider the graphs 
for positive divisors of $m$. 
Thus, we consider the family
\begin{equation} \label{family}
\mathcal{G}_{q,m} = \{ \Gamma_{q,m}(\ell) : 1\le \ell \le [\tfrac m2] \text{ with $\ell \mid m$ and $\tfrac{m}{\ell}$ even}\}.
\end{equation}
All the graphs in $\mathcal{G}_{q,m}$ are not isomorphic to each other because they have different degrees of regularity; 
in fact, $\Gamma_{q,m}(d)$ is $\frac{q^m-1}{q^d+1}$-regular, by Lemma \ref{Sls}.

\begin{rem} \label{GP}
The graphs $\Gamma_{q,m}(\ell)$ with $\ell \mid m$ are, in particular, generalized Paley graphs $GP(q^m,\frac{q^m-1}{k})$ as defined in \cite{LP}, with 
$k=q^\ell+1$. By Theorems 1.2 and 2.2 in \cite{LP}, the graph 
$\Gamma_{q,m}(\ell) = GP(q^m,\tfrac{q^m-1}{q^\ell+1})$ 
is connected if and only $q^\ell+1$ is not a multiple of $\frac{q^m-1}{p^d-1}$ for every proper divisor $d$ of $rm$, where $q=p^r$. 
This fact, together with the factorization $p^{rm}+1 = (p^{\frac{rm}{2}}-1)(p^{\frac{rm}{2}}+1)$, implies that $\Gamma_{q,m}(\frac m2)$ is not connected. However, it is not clear what happens for $\ell \ne \frac m2$ in general. 
This will be answered in Proposition \ref{properties}.
\end{rem}

\subsection{Subgraphs and lattices}
We now study conditions on the parameters $q_i,m_i,\ell_i$, $i=1,2$, such that $\Gamma_{q_1,m_1}(\ell_1)$ be a subgraph of 
$\Gamma_{q_2,m_2}(\ell_2)$, i.e.\@ 
$\ff_{q_1^{m_1}} \subset \ff_{q_2^{m_2}}$ and $E_{q_1,m_1}(\ell_1) \subset E_{q_2,m_2}(\ell_2)$.
There are two trivial cases using the same field and different powers or using the same power in different fields. Namely, 
$$ (i) \:\: S_L \subset S_\ell \:\:\Rightarrow \:\: \Gamma_{q,m}(L) \subset \Gamma_{q,m}(\ell), \qquad \quad (ii) \:\:
\ff_{q^m} \subset \ff_{q^M} \:\: \Rightarrow \:\: \Gamma_{q,m}(\ell) \subset \Gamma_{q,M}(\ell).$$ 
The case in ($ii$) holds if and only if $m \mid M$. In this case, if $m<M$, the subgraph has lower order than the graph. We thus study the case in ($i$), in which the graph and subgraph have the same number of vertices.
In the previous notations we have the following.
\begin{lem} \label{SLcSl}
Let $q, \ell, L, m \in \N$ with $q$ a prime power. 
If $\ell, L$ are divisors of $m$, with $m_\ell$ and $m_L$ even, then
$$S_L \subset S_\ell \qquad \Leftrightarrow \qquad q^\ell +1 \mid q^L+1 
\qquad \Leftrightarrow \qquad \ell \mid L \quad \text{with} \quad \tfrac{L}{\ell} \quad \text{odd.}$$
\end{lem}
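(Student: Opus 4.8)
The plan is to prove the chain of equivalences in two independent stages: first $S_L\subset S_\ell \Longleftrightarrow q^\ell+1\mid q^L+1$, which uses the cyclic subgroup structure of $\ff_{q^m}^*$ and is the only place the hypotheses on $m$ enter; and then the purely number-theoretic fact $q^\ell+1\mid q^L+1 \Longleftrightarrow \ell\mid L$ with $\tfrac{L}{\ell}$ odd, valid for all $q\ge 2$ and $\ell,L\ge 1$.

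For the first equivalence I would record that, since $\ell\mid m$ and $m_\ell$ is even, Lemma \ref{mcd} gives $(q^m-1,q^\ell+1)=q^{(m,\ell)}+1=q^\ell+1$, so $q^\ell+1\mid q^m-1$, and by Lemma \ref{Sls} the set $S_\ell$ is the unique subgroup of the cyclic group $\ff_{q^m}^*$ of order $\tfrac{q^m-1}{q^\ell+1}$; the same holds for $L$. Since in a cyclic group of order $N$ one subgroup is contained in another exactly when its order divides the other's, we get $S_L\subset S_\ell \Longleftrightarrow \tfrac{q^m-1}{q^L+1}\mid \tfrac{q^m-1}{q^\ell+1}$; and because $q^\ell+1$ and $q^L+1$ both divide $q^m-1$, this last condition is equivalent to $q^\ell+1\mid q^L+1$. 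That settles stage one.

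For the second equivalence, the direction $(\Leftarrow)$ is a one-line computation: if $L=k\ell$ with $k$ odd, then modulo $q^\ell+1$ we have $q^\ell\equiv -1$, so $q^L+1=(q^\ell)^k+1\equiv(-1)^k+1=0$. For $(\Rightarrow)$, assuming $q^\ell+1\mid q^L+1$, note $q^\ell+1$ divides both $q^{2\ell}-1$ and $q^{2L}-1$, hence divides their gcd, which by the identity $(q^a-1,q^b-1)=q^{(a,b)}-1$ equals $q^{2d}-1$ with $d=(\ell,L)$. A crude size comparison $q^\ell+1\le q^{2d}-1<q^{2d}$ gives $\ell<2d$, and since $d\mid\ell$ this forces $\ell=d=(\ell,L)$, i.e.\ $\ell\mid L$. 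Writing $L=k\ell$ and reducing mod $q^\ell+1$ as above yields $q^L+1\equiv(-1)^k+1$, so $q^\ell+1\mid q^L+1$ together with $q^\ell+1\ge 3$ forces $k$ odd. Chaining the two stages gives the statement.

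The only step needing a genuine (if small) idea rather than bookkeeping is the size/gcd argument in $(\Rightarrow)$: one must pass from a divisibility among numbers $q^a+1$, where no clean gcd formula is available, to one among numbers $q^a-1$ by doubling exponents, and then exploit that the coarse bound $\ell<2d$ together with $d\mid\ell$ pins down $\ell=d$. Everything else reduces to the cyclic subgroup correspondence and a single congruence modulo $q^\ell+1$.
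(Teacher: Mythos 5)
Your proof is correct, and it diverges from the paper's in a way worth noting. For the first equivalence the two arguments are close in substance: both use Lemma \ref{mcd} (with $\ell,L\mid m$ and $m_\ell,m_L$ even) to identify $|S_\ell|=\tfrac{q^m-1}{q^\ell+1}$ and $|S_L|=\tfrac{q^m-1}{q^L+1}$; the paper then gets $q^\ell+1\mid q^L+1$ from Lagrange applied to $S_L\le S_\ell$ and proves the converse by writing $x^{q^L+1}=(x^k)^{q^\ell+1}$ when $q^L+1=(q^\ell+1)k$, whereas your appeal to the cyclic subgroup correspondence (containment iff order divides order, plus the fact that $\tfrac{q^m-1}{q^L+1}\mid\tfrac{q^m-1}{q^\ell+1}$ is the same as $q^\ell+1\mid q^L+1$) handles both directions at once --- a mild streamlining, not a different idea. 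The genuine difference is in the implication $q^\ell+1\mid q^L+1\Rightarrow \ell\mid L$ with $\tfrac{L}{\ell}$ odd: the paper divides $q^L+1$ by $q^\ell+1$ ``as polynomials in $q$'' and reads $L=k\ell$ and the oddness of $k$ off the alternating-sign quotient, a step that is written quite informally (integer divisibility does not by itself produce such a polynomial factorization, and the paper does not argue the passage). You instead double exponents to move from numbers $q^a+1$ to numbers $q^a-1$, use the clean identity $(q^a-1,q^b-1)=q^{(a,b)}-1$ to get $q^\ell+1\mid q^{2d}-1$ with $d=(\ell,L)$, and then the size bound $q^\ell<q^{2d}$ together with $d\mid\ell$ forces $\ell=d$, after which the congruence $q^\ell\equiv-1\pmod{q^\ell+1}$ and $q^\ell+1\ge 3$ give the parity of $k$. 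This is a more airtight and self-contained route for that implication; what the paper's computation buys in exchange is the explicit quotient $q^{\ell(k-1)}-q^{\ell(k-2)}+\cdots-q^\ell+1$, which it reuses verbatim for the converse, while your converse via the same congruence is equally quick. All hypotheses are used where they should be: the conditions $\ell,L\mid m$ with $m_\ell,m_L$ even enter only through Lemma \ref{mcd} and Lemma \ref{Sls} in stage one, and stage two is, as you say, a purely arithmetic fact for any $q\ge2$.
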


\begin{proof}
For the first equivalence, since $S_L$ is a subgroup of $S_\ell$, the order $|S_L|$ divides the order $|S_\ell|$, 
that is $\frac{q^m-1}{(q^m-1,q^L+1)}$ divides $\frac{q^m-1}{(q^m-1,q^\ell+1)}$. Since $m_\ell, m_L$ are even and both $\ell$ and  $L$ divides 
$m$, we have $(q^m-1,q^\ell+1)=q^\ell+1$ and $(q^m-1,q^L+1)=q^L+1$, by Lemma \ref{mcd}, and hence $q^L+1 = (q^\ell+1)k$ for some $k$.
Conversely, $q^L+1 = (q^\ell+1)k$ for some $k$ clearly implies that $x^{q^L+1}=(x^k)^{q^\ell+1} \in S_\ell$ and thus $S_L \subset S_\ell$.

For the second equivalence, if $q^\ell+1 \mid q^L+1$, there exist some $k \in \N$ such that  
$$ q^L+1 = (q^\ell+1)(q^{L-\ell} - q^{L-2\ell} + q^{L-3\ell} - \cdots - q^{L-(k-1)\ell} + q^{L-k\ell})$$ 
with $q^{L-k\ell}=1$, that is $L=k\ell$. The alternation of signs implies that $k$ is odd.
Conversely, let $L=k\ell$ with $k$ odd, then 
$q^k+1 = (q+1)(q^{k-1}-q^{k-2} + \cdots -q + 1)$.
By changing $q$ by $q^\ell$ above we get 
$q^L+1 = (q^\ell+1)(q^{\ell(k-1)}-q^{\ell(k-2)} + \cdots -q^\ell+1)$, as desired. 
\end{proof}

We now give necessary and sufficient conditions for $\Gamma_{q, m_1}(\ell_1)$ to be a subgraph of $\Gamma_{q, m_2}(\ell_2)$.
\begin{prop} \label{teo subg}
Let $q$ be a prime power, and for $i=1,2$ let $p_i$ be a prime and $r_i, \ell_i, m_i$ be positive integers with 
$\ell_i \mid m_i$ and $m_{\ell_i}$ even. Then, we have:
\begin{enumerate}[$(a)$] \setlength\itemsep{1mm}
\item $\Gamma_{q, m_1}(\ell_1) \subset \Gamma_{q,m_2}(\ell_2)$ if and only if 
$m_1 \mid m_2$ and $\ell_2 \mid \ell_1$ with $\frac{\ell_1}{\ell_2}$ odd. 

\item $\Gamma_{p_1^{r_1}, m_1}(\ell_1) = \Gamma_{p_2^{r_2},m_2}(\ell_2)$ if and only if 
$p_1=p_2$, $r_1 m_1 = r_2 m_2$ and $r_1 \ell_1 = r_2 \ell_2$.
\end{enumerate}
\end{prop}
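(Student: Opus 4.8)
The plan is to reduce both parts to the subgroup-containment lemmas already established (Lemmas \ref{mcd}, \ref{Sls}, \ref{SLcSl}) together with the two trivial inclusions ($i$) and ($ii$) noted before the lemma.

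\textbf{Part (a).} For the ``if'' direction I would combine the two trivial monotonicities: if $m_1 \mid m_2$ then $\ff_{q^{m_1}} \subset \ff_{q^{m_2}}$, so $\Gamma_{q,m_1}(\ell_1) \subset \Gamma_{q,m_2}(\ell_1)$ on the common vertex set; and if $\ell_2 \mid \ell_1$ with $\ell_1/\ell_2$ odd, then by Lemma \ref{SLcSl} (applied over $\ff_{q^{m_2}}$, using that $\ell_1,\ell_2 \mid m_2$ — here one must check $\ell_1 \mid m_2$, which follows from $\ell_1 \mid m_1 \mid m_2$, and that $m_2/\ell_1$, $m_2/\ell_2$ are even, which follows from $v_2(m_2) \ge v_2(m_1) > v_2(\ell_1) \ge v_2(\ell_2)$) we get $S_{q,m_2}(\ell_1) \subset S_{q,m_2}(\ell_2)$, hence $\Gamma_{q,m_2}(\ell_1) \subset \Gamma_{q,m_2}(\ell_2)$. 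Composing gives the inclusion. For ``only if'': the vertex set of $\Gamma_{q,m_1}(\ell_1)$ is $\ff_{q^{m_1}}$ and that of $\Gamma_{q,m_2}(\ell_2)$ is $\ff_{q^{m_2}}$, so a subgraph relation forces $\ff_{q^{m_1}} \subseteq \ff_{q^{m_2}}$, i.e. $m_1 \mid m_2$. Then the edge condition at, say, the pair $\{0,1\}$ together with edge-preservation along the common vertices forces $S_{q,m_1}(\ell_1) \subseteq S_{q,m_2}(\ell_2) \cap \ff_{q^{m_1}}$. One then checks $S_{q,m_2}(\ell_2) \cap \ff_{q^{m_1}} = S_{q,m_1}(\ell_2)$ (both sides are the unique subgroup of $\ff_{q^{m_1}}^*$ of the appropriate index, using Lemma \ref{mcd} to compute orders), so $S_{q,m_1}(\ell_1) \subseteq S_{q,m_1}(\ell_2)$, and Lemma \ref{SLcSl} turns this into $\ell_2 \mid \ell_1$ with $\ell_1/\ell_2$ odd.

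\textbf{Part (b).} Equality of the graphs certainly entails each is a subgraph of the other. Write $q_i = p_i^{r_i}$ and apply part (a) in both directions (after rewriting $\Gamma_{p_i^{r_i},m_i}(\ell_i)$ with base field $\ff_p$, $p = p_1 = p_2$ — equality of vertex sets $\ff_{p_1^{r_1 m_1}} = \ff_{p_2^{r_2 m_2}}$ already forces $p_1 = p_2$ and $r_1 m_1 = r_2 m_2$). After this normalization, $\Gamma_{p^{r_1},m_1}(\ell_1) = \Gamma_{p,\,r_1 m_1}(r_1 \ell_1)$ and similarly for the second, so part (a) (with $q = p$, $m = r_1 m_1 = r_2 m_2$) gives $r_1\ell_1 \mid r_2\ell_2$ with odd quotient and $r_2\ell_2 \mid r_1\ell_1$ with odd quotient, forcing $r_1\ell_1 = r_2\ell_2$. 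Conversely, those three numerical equalities make the two Cayley graphs literally identical (same vertex set, same connection set), since $S_{p,rm}(r\ell)$ depends only on $rm$ and $r\ell$. The small technical point to watch is the identification $\Gamma_{q^r, m}(\ell) = \Gamma_{q, rm}(r\ell)$, which is the content of the ``obvious generalization'' remark (Remark \ref{generalized}) referred to in the text and should be invoked or proved in a line: $(q^r)^\ell + 1 = q^{r\ell}+1$ and $\ff_{(q^r)^m} = \ff_{q^{rm}}$.

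\textbf{Main obstacle.} The genuinely delicate step is the intersection identity $S_{q,m_2}(\ell_2) \cap \ff_{q^{m_1}} = S_{q,m_1}(\ell_2)$ used in the ``only if'' part of (a): one must verify that restricting the group of $(q^{\ell_2}+1)$-th powers from the big field to the subfield gives exactly the $(q^{\ell_2}+1)$-th powers of the subfield. This is where Lemma \ref{mcd} does the real work — computing $(q^{m_1}-1, q^{\ell_2}+1)$ and $(q^{m_2}-1, q^{\ell_2}+1)$ and matching the resulting cyclic-subgroup orders via the uniqueness of subgroups of a given order in a cyclic group — and it is the one place where the evenness hypotheses $m_{\ell_i}$ even are essential rather than cosmetic. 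Everything else is bookkeeping with divisibility and $2$-adic valuations.
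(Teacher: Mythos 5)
Your ``if'' direction of (a) is correct (and more carefully justified than the paper's one-line reduction), and your part (b) goes through essentially intact, since after the normalization $\Gamma_{p^{r},m}(\ell)=\Gamma_{p,rm}(r\ell)$ (which is Remark \ref{rem39}, equation \eqref{mascaradas}, not Remark \ref{generalized}) you only ever invoke (a) with $m_1=m_2$, and that case is exactly the same-field Lemma \ref{SLcSl}. The gap is in your ``only if'' argument for (a), at the last step: having reduced to $S_{q,m_1}(\ell_1)\subseteq S_{q,m_1}(\ell_2)$, you apply Lemma \ref{SLcSl} inside $\ff_{q^{m_1}}$ with exponents $\ell_1,\ell_2$, but that lemma requires $\ell_2\mid m_1$ with $m_1/(m_1,\ell_2)$ even, and all you know is $\ell_2\mid m_2$. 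When $\ell_2\nmid m_1$ the set $S_{q,m_1}(\ell_2)$ collapses to a larger power-subgroup (by Lemma \ref{mcd} it is generated by $\alpha^{(q^{m_1}-1,\,q^{\ell_2}+1)}$, and this gcd can equal $q^{(m_1,\ell_2)}+1$), so the containment of connection sets does not force $\ell_2\mid\ell_1$. Your intersection identity $S_{q,m_2}(\ell_2)\cap\ff_{q^{m_1}}=S_{q,m_1}(\ell_2)$ is also false in general: for $q=2$, $m_1=2$, $m_2=12$, $\ell_2=1$, every element of $\ff_4^*$ is a cube in $\ff_{2^{12}}^*$ (the cubes form the subgroup of order $1365$ and $3\mid 1365$), while the only cube of an element of $\ff_4^*$ is $1$.

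This is not a repairable technicality, because the implication you are trying to prove fails. Take $q=2$, $m_1=4$, $\ell_1=1$, $m_2=24$, $\ell_2=3$; all hypotheses hold since $4/1$ and $24/3$ are even. Then $S_{2,4}(1)$ is the set of cubes of $\ff_{16}^*$, i.e.\@ the unique subgroup of order $5$ of $\ff_{2^{24}}^*$, while $S_{2,24}(3)$ is the subgroup of ninth powers, of order $(2^{24}-1)/9=455\cdot 4097$, and $5\mid 455$; hence $S_{2,4}(1)\subset S_{2,24}(3)$ and therefore $\Gamma_{2,4}(1)\subset\Gamma_{2,24}(3)$, although $\ell_2=3\nmid 1=\ell_1$. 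For $m_1\mid m_2$ the true criterion is simply $\tfrac{q^{m_1}-1}{q^{\ell_1}+1}\mid\tfrac{q^{m_2}-1}{q^{\ell_2}+1}$ (containment of the unique cyclic subgroups of those orders), which is strictly weaker than ``$\ell_2\mid\ell_1$ with odd quotient'' once $m_1\neq m_2$. For what it is worth, the paper's own proof has the same lacuna: it cites Lemma \ref{SLcSl} for a containment $S_{q,m_1}(\ell_1)\subset S_{q,m_2}(\ell_2)$ between subgroups of two different fields, where the lemma does not apply; the statement is only safe (and then follows from Lemma \ref{SLcSl}) in the case $m_1=m_2$, which is the case needed for part (b).
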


\begin{proof}
(\textit{a}) Let $\Gamma_i=\Gamma_{q, m_i}(\ell_i)$, $i=1,2$. Now, $\Gamma_1 \subset \Gamma_2$ if and only if 
$V_1=\ff_{q^{m_1}} \subset V_2=\ff_{q^{m_2}}$ and 
$E_{q,m_2}(\ell_1) \subset E_{q,m_2}(\ell_2)$. The first contention is equivalent to $m_1\mid m_2$ and the second one to 
$S_{q,m_1}(\ell_1) \subset S_{q,m_2}(\ell_2)$, by considering the edges in $E_{\ell_1}$ containing the vertex $0$. The result thus follows by Lemma \ref{SLcSl}.

\noindent (\textit{b}) 
$\Gamma_1 = \Gamma_2$ if and only if $V_1=V_2$ and $E_1=E_2$.
The equality $V_1=V_2$ of the vertex sets is equivalent to $p_1^{r_1m_1} = p_2^{r_2m_2}$, hence $p_1=p_2=p$ and $r_1m_1 = r_2m_2$. 
The equality $S_1=S_2$ between the connection sets is $\{x^{p^{r_1\ell_1}+1} : x\in \ff_{q}^*\} = 
\{ x^{p^{r_2\ell_2}+1} : x\in \ff_{q}^* \}$, with $q=p^{r_1m_1}=p^{r_2m_2}$. 
By Lemma \ref{SLcSl}, this happens if and only if $r_1m_1=r_2m_2$ and $r_1\ell_1 = r_2\ell_2$, since 
$\frac{r_2\ell_2}{r_1\ell_1}=1$ is odd.
\end{proof}

\begin{exam}
(\textit{i}) 
We have $\mathcal{G}_{q,12}=\{ \Gamma_{q,12}(1), \Gamma_{q,12}(2), \Gamma_{q,12}(3), \Gamma_{q,12}(6)\}$, 
$\Gamma_{q,12}(4)$ is excluded because $m_4=3$ is odd. We have $\Gamma_{q,12}(6) \subset \Gamma_{q,12}(2)$ and 
$\Gamma_{q,12}(3) \subset \Gamma_{q,12}(1)$.

\noindent 
(\textit{ii}) 
The graphs in $\mathcal{G}_{q,30}$ are $\Gamma_{q,30}(1)$, $\Gamma_{q,30}(3)$, $\Gamma_{q,30}(5)$ and 
$\Gamma_{q,30}(15)$ where we have ruled out 
$\Gamma_{q,30}(10)$, $\Gamma_{q,30}(6)$ and $\Gamma_{q,30}(2)$. 
We have $\Gamma_{q,30}(15) \subset \Gamma_{q,30}(3)$, $\Gamma_{q,30}(5) \subset \Gamma_{q,30}(1)$. 

\noindent 
(\textit{iii}) 
If $m=2p^t$, with $p$ odd and $t\ge 1$, then we have the chain of proper subgraphs
$$\Gamma_{q,m}(p^{t}) \subset  \Gamma_{q,m}(p^{t-1}) \subset \cdots \subset \Gamma_{q,m}(p^{2}) \subset \Gamma_{q,m}(p) \subset 
\Gamma_{q,m}(1).$$
All the graphs are connected, except for the first one since $\ell=\tfrac m2$ (see Proposition \ref{properties} ahead).
\end{exam}

\begin{rem} \label{rem39}
If $p,r,\ell,m$ are non-negative integers, with $p$ prime and $\ell \mid m$, then
\begin{equation} \label{mascaradas} 
\Gamma_{p, rm}(r\ell) = \Gamma_{p^r, m}(\ell) = \Gamma_{p^{r\ell}, \frac{m}{\ell}}(1).
\end{equation}
The first equality says 
that we can restrict ourselves to the case of $m,\ell$ coprime by taking $d=(m,\ell)$, i.e.\@
$\Gamma_{p, m}(\ell) = \Gamma_{p^d, \frac md}(\tfrac{\ell}d)$. 
The second equality implies that we can always take the simplest connection set $S_1 = \{x^{q+1}:x\in \ff_q\}$.
\end{rem}

The lattice (poset) of divisors of $m$, that we denote by $\Lambda(m)$, together with Proposition \ref{teo subg} induce a lattice structure on a subset of $\mathcal{G}_{q,m}$ in \eqref{family} which, since $q$ is fixed, we will denote by 
$\Lambda(\mathcal{G}(m))$.
We will see that this structure is rather simple. 
  
\begin{prop}
Let $m=2^t r$ with $r$ odd. The lattice $\Lambda(\mathcal{G}(m))$ is empty if $m$ is odd while it is isomorphic to $t$ copies of the lattice of divisors of $r$ if $m$ is even. More precisely, $\Lambda(\mathcal{G}(r)) =\varnothing$ for $t=0$ and, 
if $\sqcup$ denotes disjoint union, for $t\ge 1$ we have 
\begin{equation} \label{lattice}
\Lambda(\mathcal{G}(2^t r)) = \Lambda(\mathcal{G}(r)) \sqcup \Lambda(\mathcal{G}(2r)) \sqcup \cdots 
\sqcup \Lambda(\mathcal{G}(2^{t-1} r)). 
\end{equation}
Also, $\Lambda(\mathcal{G}(2^k r)) \simeq 2^k \Lambda(r)$ for each $1\le k \le t$ where 
$a\Lambda(r) = \{ad : d \in \Lambda(r)\}$.
Thus $\Lambda(\mathcal{G}(2^t r))$ has $t$ connected components and hence it is connected if and only if $t=\nu_2(m)=1$.
\end{prop}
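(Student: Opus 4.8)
The plan is to first reduce everything to a concrete description of the set underlying $\Lambda(\mathcal{G}(m))$, namely the divisors $d \mid m$ with $1 \le d \le m-1$ and $m_d = m/(m,d) = m/d$ even (since $d \mid m$), together with the partial order induced by Proposition~\ref{teo subg}(a): for such $d, D$ we have $\Gamma_{q,m}(D) \subset \Gamma_{q,m}(d)$ if and only if $d \mid D$ with $D/d$ odd. If $m$ is odd, then no divisor $d$ of $m$ can have $m/d$ even, so the set is empty and $\Lambda(\mathcal{G}(m)) = \varnothing$; this settles the $t=0$ case. So assume $m = 2^t r$ with $r$ odd and $t \ge 1$.

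The key combinatorial observation is that a divisor $d$ of $m = 2^t r$ with $m/d$ even is exactly a divisor of $m$ whose $2$-adic valuation $\nu_2(d)$ lies in $\{0, 1, \dots, t-1\}$; equivalently $d = 2^k e$ with $0 \le k \le t-1$ and $e \mid r$. I would partition the index set accordingly into the $t$ classes $C_k = \{2^k e : e \mid r\}$ for $0 \le k \le t-1$, noting $|C_k| = \tau(r)$ for each $k$. The next step is to check that these classes are exactly the connected components of the poset: if $\Gamma_{q,m}(D) \subset \Gamma_{q,m}(d)$ (or the reverse) then $d \mid D$ (or $D \mid d$) with the quotient odd, hence $\nu_2(d) = \nu_2(D)$, so comparable elements lie in the same $C_k$; conversely, within a fixed $C_k$ any two elements $2^k e$, $2^k e'$ with $e \mid e'$ satisfy $2^k e \mid 2^k e'$ with odd quotient $e'/e$, so $C_k$ is the image of $\Lambda(r)$ under $e \mapsto 2^k e$ and this map is an order-isomorphism onto $C_k$ with the induced order. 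That gives $\Lambda(\mathcal{G}(2^k r)) \simeq 2^k \Lambda(r)$ and the disjoint-union decomposition \eqref{lattice}, once one also checks that $\Lambda(\mathcal{G}(2^k r))$ for $1 \le k \le t$ is itself just one such class $C_{k-1}$-type object — more precisely that $\Lambda(\mathcal{G}(2^k r))$ consists of the divisors $2^j e$ of $2^k r$ with $0 \le j \le k-1$, $e \mid r$, and then re-indexing.

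The one point requiring a little care — and the main obstacle — is the bookkeeping in \eqref{lattice}: the right-hand side should be read as saying that the \emph{poset} $\Lambda(\mathcal{G}(2^t r))$ decomposes as a disjoint union of the sub-posets which are order-isomorphic to $\Lambda(\mathcal{G}(2r)), \dots, \Lambda(\mathcal{G}(2^{t-1}r))$ \emph{and} $\Lambda(\mathcal{G}(r))$-worth of structure; but $\Lambda(\mathcal{G}(r)) = \varnothing$ for $r$ (odd) while the $k=0$ class $C_0 = \{e : e \mid r, \, e \le m-1\}$ is \emph{not} empty. I would resolve this by observing that $\Lambda(\mathcal{G}(2^k r))$ for $k \ge 1$ is, as a poset, the disjoint union $C_0^{(k)} \cup C_1^{(k)} \cup \dots \cup C_{k-1}^{(k)}$ of $k$ copies of $\Lambda(r)$, so the $k=t$ case itself already contains a copy of $\Lambda(r)$ as its "bottom layer" $C_0$, and \eqref{lattice} is the telescoping identity $\Lambda(\mathcal{G}(2^t r)) = \big(\bigsqcup_{j=0}^{t-1} C_j\big)$ where $C_j$, with the induced order, is isomorphic to the top layer of $\Lambda(\mathcal{G}(2^{j+1} r))$; matching this against the stated formula is purely a matter of reindexing the layers. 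Finally, since there are exactly $t$ components $C_0, \dots, C_{t-1}$, each nonempty (as $1 \in C_0$, and for $k \ge 1$, $2^k \in C_k$, and these are all $< m$ because $m/d$ is even forces $d \le m/2 < m$), the poset is connected precisely when $t = 1$, i.e. $\nu_2(m) = 1$, which is the last assertion.
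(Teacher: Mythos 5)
Your proof is correct, and it rests on the same decomposition the paper has in mind: partition the admissible divisors by their $2$-adic valuation, note that comparability in the order coming from Proposition \ref{teo subg} forces equal valuation, and identify each layer $C_k=\{2^k e : e \mid r\}$, $0\le k\le t-1$, with $\Lambda(r)$ via $e\mapsto 2^k e$. The difference is in how this is justified: the paper first rewrites the underlying set as $\Lambda_e(m)=\Lambda(\tfrac m2)$ equipped with the odd-divisibility order, then states the layer decomposition $\Lambda(r)\cup 2\Lambda(r)\cup\cdots\cup 2^{t-1}\Lambda(r)$, verifies it only when $r$ is prime, and defers the general case to an induction on the number of prime factors of $r$ with ``details left to the reader''; your valuation argument proves the same identity directly and uniformly, with no induction, so it effectively completes the sketch. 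You are also right to flag the literal reading of \eqref{lattice}: as written, $\Lambda(\mathcal{G}(r))=\varnothing$ and $\Lambda(\mathcal{G}(2^k r))$ itself has $k$ components, so the formula only makes sense read layer by layer (as you do), and the identity the paper's proof actually manipulates is precisely your $C_0\cup\cdots\cup C_{t-1}$ decomposition. Your count of $t$ nonempty components, each a copy of $\Lambda(r)$, and the conclusion that the poset is connected iff $t=\nu_2(m)=1$, agree with the paper.
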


\begin{proof}
We will use the notations 
\begin{equation} \label{Leo}
	\Lambda_e(m) = \{d\mid m : \tfrac md \text{ is even} \} \qquad \text{and} \qquad 
	\Lambda_o(m) = \{d\mid m : \tfrac md \text{ is odd}\}. 
\end{equation}
Note that $\Lambda_e(m) = \Lambda(\tfrac m2)$ if $m$ is even and $\Lambda_e=\varnothing$ if $m$ is odd. Also, 
$\Lambda_o(m) = \Lambda(m)$ if $m$ is odd and $\Lambda_o = \varnothing$ if $m$ is even. 
Thus, by \eqref{family}, \eqref{Leo}, and the previous observations we have
	$$\Lambda(\mathcal{G}(m)) \simeq (\Lambda_e)_o (m) = \Lambda_o (\tfrac m2)$$ 
where the isomorphism sends $\Gamma_{q,m}(d)$ to $d$ for $d\mid m$.
Now, it is clear that $\Lambda(\mathcal{G}(m))$ is empty if $t=0$ ($m$ odd). If $t\ge 1$ we have 
$\Lambda(\mathcal{G}(2^t r)) \simeq \Lambda _o(2^{t-1} r)$.
If $r=p$ is a prime, the result in \eqref{lattice} is clear, 
and hence 
	$$\Lambda(2^{t-1}r) = \Lambda(r) \sqcup 2\Lambda(r) \sqcup \cdots \sqcup 2^{t-1} \Lambda(r).$$ 
The case of $m=2^t p^k$ is a slightly more involved.
The general result in \eqref{lattice} follows by induction on the number of primes of $r$. We leave the details to the reader. 
\end{proof}

\section{The spectra of generalized Paley graphs} \label{sec:3}
It is well known that the spectrum of a Cayley graph $\Gamma=X(G,S)$, with $G$ an abelian group, is determined by the 
characters of $G$. More precisely, each character $\chi$ of $G$ induces an eigenvalue of $\Gamma=X(G,S)$ by the expression 
\begin{equation} \label{Eigencayley}
\chi(S) = \sum_{g\in S} \chi(g).
\end{equation} 
When we consider the graphs $\Gamma_{q,m}(\ell)$, the sums in \eqref{Eigencayley} will be exponential sums 
associated to certain Gauss sums or to quadratic forms determined by the trace function and the exponent $q^\ell+1$. 
We have chosen the quadratic form approach.

\subsection{Quadratic forms and exponential sums} 
A quadratic form in $\ff_{q^m}$ is an homogeneous polynomial of degree 2 in $\ff_{q^m}[x]$. 
More generally, any function 
$Q : \mathbb{F}_{q^m}\rightarrow \mathbb{F}_{q}$
can be identified with a polynomial of $m$ variables over $\mathbb{F}_{q}$ via an isomorphism $\mathbb{F}_{q^m}\simeq\mathbb{F}_{q}^m$ of $\mathbb{F}_{q}$-vector spaces. 
Such $Q$ is said to be a \textit{quadratic form in $m$ variables} if the corresponding polynomial is homogeneous of degree $2$. 
The minimum number of variables needed to represent $Q$ as a polynomial in several variables is a well defined number $r$ 
called the \textit{rank} of $Q$.
Two quadratic forms $Q_{1},Q_{2}$ are \textit{equivalent} if there is an invertible $\mathbb{F}_{q}$-linear function $S:\mathbb{F}_{q^m}\rightarrow\mathbb{F}_{q^m}$ such that $Q_1(x) =  Q_{2}(S(x))$.

Fix $Q$ a quadratic form from $\mathbb{F}_{q^m}$ to $\mathbb{F}_{q}$ and $\xi \in \mathbb{F}_{q}$. Consider the number  
\begin{equation} \label{NQ}
N_{Q}(\xi) = \# \{ x\in \ff_{q^m} : Q(x) = \xi \}.
\end{equation}
We will abbreviate 
$N_Q = N_Q(0)=\#\ker Q$.
It is a classical result that there are three equivalence classes of quadratic forms over finite fields. 
This classification depends on the parity of the characteristic (see for instance \cite{LN} or \cite{MP}). 
In both characteristics, there are 2 classes with even rank and one with odd rank. In the case of even rank, it is usual to say that 
$Q$ is of type I or III, but it will be convenient for us to call them of type $1$ or $-1$, respectively. 
Thus, we will put 
$$\varepsilon_Q = \pm 1 \qquad \quad \text{if $Q$ is of type $\pm 1$}$$ 
and we will call this sign $\varepsilon_Q$ the \textit{type} of $Q$.
For even rank, the number 
$N_Q(\xi)$ does not depend on the characteristic and it is given as follows
\begin{equation} \label{NQ+-} 
N_Q(\xi) = q^{m-1} + \varepsilon_Q  \, \nu(\xi) \, q^{m-\frac{r}{2}-1}
\end{equation}
where 
$\nu(0)=q-1$ and $\nu(z)=-1$ if $z\in \mathbb{F}_{q}^*$.

Recall that in general, the trace map from $\ff_{q^m}$ to $\ff_q$, with $q$ a prime power, is defined by
$$\Tr_{q^m/q}(x) = x^{q^{m-1}} + \cdots + x^{q^2} + x^q + x$$ 
for $x\in \ff_{q^m}$. 
For $Q$ as before, $a \in \mathbb{F}_{q}^*$ and $\zeta_p=e^{\frac{2\pi i}{p}}$ we will need the following exponential sum
\begin{equation}\label{TQ}
T_{Q,a} = \sum_{x\in\mathbb{F}_{q^m}}{\zeta_{p}^{\Tr_{q/p}(aQ(x))}}. 
\end{equation}
We will abbreviate $T_{Q,1}=T_Q$.
Notice that $T_{Q,a} \in \mathbb{C}$. We next give the values of $T_{Q,a}$ in the case of even rank.

\begin{lem} \label{LsumT}
If $Q$ be a quadratic form of $m$ variables over $\mathbb{F}_{q}$ of even rank $r$.
Then, 
for all $a\in \mathbb{F}_{q}^*$ we have $T_{Q,a} = T_Q = \varepsilon_{Q} \, q^{m-\frac{r}{2}}$.
\end{lem}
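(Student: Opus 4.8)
The plan is to compute $T_{Q,a}$ by reducing an arbitrary quadratic form of even rank $r$ to a normal form in exactly $r$ variables, and then evaluating the resulting Gauss-type sum. First I would observe that $T_{Q,a}$ is invariant under equivalence of quadratic forms: if $Q_1(x) = Q_2(S(x))$ for an invertible $\ff_q$-linear map $S$, then the change of variables $x \mapsto S(x)$ permutes $\ff_{q^m}$ and leaves the sum unchanged, so $T_{Q_1,a} = T_{Q_2,a}$. Moreover, replacing $Q$ by $aQ$ only rescales the type by the quadratic character of $a$ in the relevant sense, so it suffices to handle $T_Q = T_{Q,1}$ and then note how $a$ enters; in fact the cleanest route is to compute $T_{Q,a}$ directly for the normal form, keeping $a$ as a parameter throughout.

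Next I would invoke the classification of quadratic forms over $\ff_q$ recalled just above the statement: since $Q$ has even rank $r$, it is equivalent to a form that, after identifying $\ff_q^m \simeq \ff_q^r \oplus \ff_q^{m-r}$, depends only on the first $r$ coordinates and is there either the "hyperbolic" type ($\varepsilon_Q = 1$) or the anisotropic-plus-hyperbolic type ($\varepsilon_Q = -1$). The sum over the last $m-r$ trivial coordinates contributes a factor $q^{m-r}$. For the remaining $r$ coordinates the standard approach is to write $T_{Q,a}$ in terms of the level-set counts $N_Q(\xi)$ via
\begin{equation*}
T_{Q,a} = \sum_{\xi \in \ff_q} N_Q(\xi)\, \zeta_p^{\Tr_{q/p}(a\xi)},
\end{equation*}
where here $N_Q(\xi)$ is computed for the rank-$r$ form. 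Substituting the formula \eqref{NQ+-}, namely $N_Q(\xi) = q^{m-1} + \varepsilon_Q\, \nu(\xi)\, q^{m - r/2 - 1}$, and using that $\sum_{\xi \in \ff_q} \zeta_p^{\Tr_{q/p}(a\xi)} = 0$ for $a \ne 0$ (orthogonality of the additive character) kills the $q^{m-1}$ term, while $\sum_{\xi} \nu(\xi)\zeta_p^{\Tr_{q/p}(a\xi)} = (q-1) - \sum_{\xi \ne 0}\zeta_p^{\Tr_{q/p}(a\xi)} = (q-1) - (-1) = q$, again by orthogonality. This yields $T_{Q,a} = \varepsilon_Q\, q^{m - r/2 - 1} \cdot q = \varepsilon_Q\, q^{m - r/2}$, independent of $a \in \ff_q^*$, and in particular nonzero.

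The main subtlety — and what I expect to be the only real obstacle — is justifying the use of the level-set formula \eqref{NQ+-} in this argument without circularity, since the excerpt presents \eqref{NQ+-} as known but one should be careful that it applies to the \emph{rank}-$r$ form with $m$ understood as the ambient dimension; one must check the bookkeeping of the $q^{m-r}$ factor coming from the non-essential variables matches the exponent $m - r/2$ rather than $r/2$. An alternative, self-contained route that avoids relying on \eqref{NQ+-} is to diagonalize: over $\ff_q$ with $q$ odd, $Q$ is equivalent to $c_1 x_1^2 + \cdots + c_r x_r^2$ (times the trivial coordinates), so $T_{Q,a}$ factors as $q^{m-r}\prod_{i=1}^r \big(\sum_{x_i \in \ff_q} \zeta_p^{\Tr_{q/p}(a c_i x_i^2)}\big)$, a product of $r$ quadratic Gauss sums each of absolute value $\sqrt q$; multiplying their signs and the Gauss sum phases (which for even $r$ combine to a real number $\pm q^{r/2}$, the sign being exactly the type $\varepsilon_Q$ after accounting for the rescaling by $a$) gives the claim, and the $q$-even case is handled similarly using the two normal forms $x_1 x_2 + \cdots + x_{r-1}x_r$ and $x_1^2 + x_1 x_2 + b x_2^2 + x_3 x_4 + \cdots$. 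I would present the first (level-set) argument as the main proof since it is shortest and uniform in the characteristic, mentioning that the independence from $a$ is the key output, with the Gauss-sum factorization available as a cross-check.
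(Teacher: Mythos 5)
Your proposal is correct, but it takes a genuinely different route from the paper. The paper only extracts the single value $N_Q=N_Q(0)$ from \eqref{NQ+-}, writes $N_Q = q^{m-1}+\tfrac1q\sum_{a\in\ff_q^*}T_{Q,a}$ by character orthogonality to get $\sum_{a\in\ff_q^*}T_{Q,a}=\varepsilon_Q(q-1)q^{m-\frac r2}$, and then imports from \cite{FL} (odd $q$) and \cite{LTW} (even $q$) the fact that each $|T_{Q,a}|$ is either $0$ or $q^{m-\frac r2}$; the equality case of the triangle inequality then forces all $T_{Q,a}$ to coincide and equal $\varepsilon_Q q^{m-\frac r2}$. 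You instead use the full level-set formula \eqref{NQ+-} for every $\xi\in\ff_q$ and compute $T_{Q,a}=\sum_{\xi}N_Q(\xi)\zeta_p^{\Tr_{q/p}(a\xi)}$ directly, with the $q^{m-1}$ term killed by orthogonality and $\sum_\xi\nu(\xi)\zeta_p^{\Tr_{q/p}(a\xi)}=q$; this is shorter, uniform in the characteristic, and avoids both the external bound on $|T_{Q,a}|$ and the triangle-inequality argument, at the cost of relying on \eqref{NQ+-} for all fibers rather than just the kernel. Your worry about bookkeeping is unfounded: \eqref{NQ+-} as stated already refers to a rank-$r$ form in ambient dimension $m$, so no normal-form reduction is needed at all for the level-set computation. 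Two minor imprecisions that do not affect the argument: for even rank $r$ the type of $aQ$ is actually unchanged (the discriminant is multiplied by $a^r$, a square), so the remark that scaling ``rescales the type by the quadratic character of $a$'' is off, though harmless since you keep $a$ inside the character; and the Gauss-sum cross-check in even characteristic needs the $\ff_2$-type normal forms with care, but you only offer it as a sketch.
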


\begin{proof} 
Since $r$ is even, by \eqref{NQ+-} we have  
$N_Q = q^{m-1} + \varepsilon_{Q} (q-1) q^{m-\frac{r}{2}-1}$.
On the other hand, note that for any $a\in \ff_q$ fixed, 
$\chi_{a}(y) = \zeta_p^{\Tr_{q/p}(ay)}$ with $y\in \ff_q$ and $\zeta_{p}=e^{\frac{2\pi i}{p}}$
is a character of $\ff_{q}$. 
Thus, by orthogonality of the characters we get, using \eqref{TQ}, that 
\begin{equation*} \label{NQ2}
N_Q = \tfrac{1}{q} \sum_{x\in\ff_{q^m}} \sum_{a\in\ff_q} \chi_a(Q(x)) =
\tfrac{1}{q} \sum_{x\in\ff_{q^m}} \sum_{a\in\ff_q} \zeta_{p}^{\Tr_{q/p}(aQ(x))} =
q^{m-1} + \tfrac{1}{q} \sum_{a\in\ff_q^*} T_{Q,a}. 
\end{equation*}
Therefore, by equating the two previous expressions for $N_Q$ we arrive at 
\begin{equation} \label{TQ2}
 \sum_{a\in\ff_q^*} T_{Q,a} = \varepsilon_{Q} (q-1) q^{m-\frac{r}{2}}.
\end{equation}

It is known that $|T_Q| = q^{m-\frac r2}$ or $|T_Q|=0$ (see \cite{FL} for $q$ odd, \cite{LTW} for $q$ even).
Notice that $Q_{a}(x):=aQ(x)$, with $a\in \ff_{q}^*$, is a quadratic form of the same rank of $Q(x)$, that is $r_{Q_a}=r_Q=r$. 
Thus, 
$$T_{Q,a} = \sum_{x\in\mathbb{F}_{q^m}}{\zeta_{p}^{\Tr_{q/p}(aQ(x))}} = 
\sum_{x\in\mathbb{F}_{q^m}}{\zeta_{p}^{\Tr_{q/p}(Q_a(x))}} = T_{Q_a}$$
and hence $|T_{Q,a}| = q^{m-\frac r2}$ or $|T_{Q,a}| = 0$ for every $a\in \ff_q^*$.
Now, by \eqref{TQ2} and the triangle inequality, we have 
$$\sum_{a\in \ff_q^*} |T_{Q,a}|\le(q-1) q^{m-\frac r2}=|\sum_{a\in \ff_q^*} T_{Q,a}| \le \sum_{a\in \ff_q^*} |T_{Q,a}|.$$
This implies that $|T_{Q,a}|\ne 0$ and $|T_{Q,a}|=q^{m-\frac{r}{2}}$ for every $a\in \ff_{q}^*$. 
In particular, $T_Q\ne 0$. 
Thus, 
$$|\sum_{a\in\ff_q^*}T_{Q,a}|=\sum_{a\in\ff_q^*}|T_{Q,a}| =(q-1) \, q^{m-\frac r2}.$$ 
Since equality holds in the triangular inequality we have that 
$T_{Q} = T_{Q,a}$ for every $a\in\ff_q^*$. By \eqref{TQ2}, we must have that 
$T_{Q} = \varepsilon_{Q} \, q^{m-\frac{r}{2}}$, 
as we wanted. 
\end{proof}

\subsubsection*{Trace forms}
A whole family of quadratic forms over $\ff_q$ in $m$ variables, usually called `trace forms', are given by  
\begin{equation} \label{QRx}
Q_R(x) = \Tr_{q^m/q}(xR(x))
\end{equation} 
where $R(x)$ is a $q$-linearized polynomial over $\ff_q$ (i.e.\@ $R(x) =\sum_{i=0}^t a_i x^{q^i}$).

\begin{rem} \label{generalized}
If one considers two $q$-linearized polynomials $S(x), R(x)$ in \eqref{QRx}, the function
$Q_{S,R}(x) = \Tr_{q^m/q}(S(x)R(x))$ also gives a quadratic form (notice that if $S(x)=x$ then $Q_{S,R}(x)=Q_R(x)$). 
If we take monomials, 
say $S(x)= \gamma_1 x^{q^{\ell}+1}$ and $R(x) = \gamma_2 x^{q^{\ell_2}+1}$ with $\ell_1\ne\ell_2$, then 
$S(x)R(x) = \gamma x^{q^{\ell_1} + q^{\ell_2}}$.
Thus, the associated Cayley graph $\Gamma_{S,R} = X( \ff_{q^m}, \{ x^{q^{\ell_1} + q^{\ell_2}} : x \in \ff_{q^m}^* \} )$
is one of the graphs defined in \eqref{gamaml}; 
namely $\Gamma_{q,m}(\ell)$ with $\ell = \max\{\ell_1, \ell_2 \}-\min \{ \ell_1, \ell_2 \}$. 
\end{rem}

We are interested in the case when $R(x) = \gamma x^{q^\ell}$ with 
$\ell \in \N$, $\gamma \in \ff_{q^m}^*$, i.e.\@
\begin{equation} \label{Qell}
Q_{\gamma, \ell}(x) = \Tr_{q^m/q}(\gamma x^{q^\ell+1}). 
\end{equation}
The next theorems, due to Klapper, give the distribution of ranks and types of the family 
$\{ Q_{\gamma, \ell}(x) : \gamma \in \mathbb{F}_{q^m}, \ell \in \N \}$ of quadratics forms.

In the sequel, we will need the following notation
\begin{equation} \label{epsilon l}
\varepsilon_\ell = (-1)^{\frac 12 m_\ell} = \begin{cases}
\hfil 1, & \qquad \text{if {\small $\tfrac 12$}$m_\ell$ is even}, \\[1.1mm]
-1, 	 & \qquad \text{if {\small $\tfrac 12$}$m_\ell$ is odd}, 
\end{cases}
\end{equation}
where $m_\ell=\tfrac{m}{(m,\ell)}$.

\begin{thm}[even characteristic (\cite{K1})] \label{Thmpar}
Let $q$ be a power of $2$, let $m,\ell \in \N$ such that $m_\ell$ is even and let $S_\ell$ be as in \eqref{Sqml}. Then $Q_{\gamma,\ell}$ is of even rank and we have:
\begin{enumerate}[$(a)$]
\item If $\varepsilon_\ell = \pm 1$ and $\gamma \in S_\ell$ 
then $Q_{\gamma, \ell}$ is of type $\mp 1$ and has rank $m-2{(m,\ell)}$. \msk
		
\item If $\varepsilon_\ell = \pm 1$ and $\gamma \notin S_\ell$ 
		then $Q_{\gamma,\ell}$ is of type $\pm 1$ and has rank $m$.
\end{enumerate}
\end{thm}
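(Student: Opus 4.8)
The plan is to read the rank off the polar form of $Q_{\gamma,\ell}$ and the type off an exponential sum, using Lemma~\ref{LsumT}. In characteristic $2$ one attaches to a quadratic form $Q$ on $\ff_{q^m}$ its polar form $B(x,y)=Q(x+y)+Q(x)+Q(y)$, which is alternating, and $\mathrm{rank}(Q)=m-\dim_{\ff_q}\mathrm{rad}(B)$ provided $Q$ vanishes on $\mathrm{rad}(B)$ (otherwise the rank is one larger, and odd). Using $(x+y)^{q^\ell+1}=x^{q^\ell+1}+x^{q^\ell}y+xy^{q^\ell}+y^{q^\ell+1}$ together with the $\gal$-invariance of $\Tr_{q^m/q}$, one computes
$$B(x,y)=\Tr_{q^m/q}\big((\gamma x^{q^\ell}+\gamma^{q^{m-\ell}}x^{q^{m-\ell}})\,y\big),$$
so by nondegeneracy of the trace pairing $\mathrm{rad}(B)=\ker L$ with $L(x)=\gamma x^{q^\ell}+\gamma^{q^{m-\ell}}x^{q^{m-\ell}}$ a $q$-linearized polynomial. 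Raising $L(x)=0$ to the $q^\ell$-th power rewrites it (for $x\neq0$) as $x^{q^{2\ell}-1}=\gamma^{1-q^\ell}$; since $m_\ell$ even forces $v_2(m)>v_2(\ell)$, an elementary $2$-adic computation gives $(2\ell,m)=2(m,\ell)$ and hence $\gcd(q^{2\ell}-1,q^m-1)=q^{2(m,\ell)}-1$, so this equation has either $0$ or $q^{2(m,\ell)}-1$ nonzero solutions, whence $\dim_{\ff_q}\mathrm{rad}(B)\in\{0,\,2(m,\ell)\}$.

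Next I would show that $\dim\mathrm{rad}(B)=2(m,\ell)$ precisely when $\gamma\in S_\ell$, and that $Q_{\gamma,\ell}$ then vanishes on $\mathrm{rad}(B)$. If $\gamma=\beta^{q^\ell+1}$, then $\gamma^{1-q^\ell}=\beta^{-(q^{2\ell}-1)}$, so the solution set, together with $0$, is $\beta^{-1}\ff_{q^{2(m,\ell)}}$ (using $2(m,\ell)\mid m$), of $\ff_q$-dimension $2(m,\ell)$; conversely, a nontrivial solution set forces $\gamma\in S_\ell$ by a divisibility argument using $\gcd(q^\ell-1,\,q^{(m,\ell)}+1)=1$ and the description $S_\ell=\langle\alpha^{q^{(m,\ell)}+1}\rangle$ from (the proof of) Lemma~\ref{Sls}. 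When $\gamma\in S_\ell$, for $w=\beta^{-1}u$ with $u\in\ff_{q^{2(m,\ell)}}$ one has $u^{q^\ell}=u^{q^{(m,\ell)}}$ (because $\ell/(m,\ell)$ is odd, so $\ell\equiv(m,\ell)\bmod 2(m,\ell)$), hence $Q_{\gamma,\ell}(w)=\Tr_{q^m/q}(u^{q^{(m,\ell)}+1})$ with $u^{q^{(m,\ell)}+1}\in\ff_{q^{(m,\ell)}}$; since $m_\ell=m/(m,\ell)$ is even, this trace equals $\Tr_{q^{(m,\ell)}/q}(m_\ell\,u^{q^{(m,\ell)}+1})=0$ in characteristic $2$. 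As $\mathrm{rad}(B)=\{0\}$ when $\gamma\notin S_\ell$, in all cases $Q_{\gamma,\ell}$ vanishes on its radical, so the rank is $m-2(m,\ell)$ if $\gamma\in S_\ell$ and $m$ otherwise; in particular it is even, proving the rank assertions.

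Finally, for the type I would invoke Lemma~\ref{LsumT}: $T_{Q_{\gamma,\ell}}=\varepsilon_{Q_{\gamma,\ell}}\,q^{m-r/2}\neq0$ with $r$ the rank just found, so it suffices to evaluate the Weil sum $T_{Q_{\gamma,\ell}}=\sum_{x\in\ff_{q^m}}\psi(\gamma x^{q^\ell+1})$, $\psi$ the canonical additive character of $\ff_{q^m}$, and record its sign. Grouping $x$ by the value $x^{q^\ell+1}\in S_\ell\cup\{0\}$ (each nonzero value of $S_\ell$ attained $q^{(m,\ell)}+1$ times, by Lemma~\ref{mcd}) reduces the sum to the Gaussian period $\sum_{s\in S_\ell}\psi(\gamma s)$ over the subgroup $S_\ell$ of index $q^{(m,\ell)}+1$; since $q^{(m,\ell)}\equiv-1$ modulo that index, we are in the semiprimitive (uniform cyclotomy) case, where this period has a closed form equal to a signed power of $q$ — equivalently one may count $N_{Q_{\gamma,\ell}}(0)$ directly by uniform cyclotomy. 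Carrying out this evaluation and tracking signs yields $\varepsilon_{Q_{\gamma,\ell}}=-\varepsilon_\ell$ when $\gamma\in S_\ell$ and $\varepsilon_{Q_{\gamma,\ell}}=\varepsilon_\ell$ when $\gamma\notin S_\ell$, the factor $\varepsilon_\ell=(-1)^{m_\ell/2}$ entering through the Davenport--Hasse relation (or Stickelberger's congruence) for the ambient Gauss sum. I expect this last sign bookkeeping — pinning down the exact exponent of $-1$ in the semiprimitive Gauss sum — to be the main obstacle; the rank computation above is entirely elementary.
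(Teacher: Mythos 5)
First, note that the paper does not prove Theorem \ref{Thmpar} at all: it is quoted from Klapper \cite{K1} as a known input, so there is no internal argument to compare with and what you propose is a self-contained proof. Your rank computation is correct and complete: the polar form $B(x,y)=\Tr_{q^m/q}\big((\gamma x^{q^\ell}+\gamma^{q^{m-\ell}}x^{q^{m-\ell}})y\big)$, the identification $\mathrm{rad}(B)=\ker L$, the count via $\gcd(q^{2\ell}-1,q^m-1)=q^{2(m,\ell)}-1$ (using $v_2(m)>v_2(\ell)$), the equivalence ``$\mathrm{rad}(B)\neq 0 \Leftrightarrow \gamma\in S_\ell$'' via $\gcd(q^\ell-1,q^{(m,\ell)}+1)=1$ (valid here because $q^{(m,\ell)}+1$ is odd in characteristic $2$), and the vanishing of $Q_{\gamma,\ell}$ on the radical via $\Tr_{q^{(m,\ell)}/q}(m_\ell z)=0$ are all sound, and they do yield rank $m-2(m,\ell)$ for $\gamma\in S_\ell$ and rank $m$ otherwise, both even.

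The gap is in the type determination, which is the substantive half of the statement. You reduce $\varepsilon_{Q_{\gamma,\ell}}$ to the sign of a semiprimitive Gauss sum (Gaussian period of order $q^{(m,\ell)}+1$), but then assert that ``carrying out this evaluation and tracking signs yields'' the claimed types, while conceding that pinning down the exact exponent of $-1$ is the main obstacle. That obstacle is exactly where the factor $\varepsilon_\ell=(-1)^{\frac12 m_\ell}$ and the asymmetry between $\gamma\in S_\ell$ and $\gamma\notin S_\ell$ come from; without it you only know $|T_{Q_{\gamma,\ell}}|$, which Lemma \ref{LsumT} already gives, not the type. A cheap averaging argument would get you part of the way: $\sum_{\gamma\in\ff_{q^m}}T_{Q_{\gamma,\ell}}=q^m$, hence $\sum_{\gamma\neq 0}T_{Q_{\gamma,\ell}}=0$, and since $Q_{\gamma c^{q^\ell+1},\ell}(x)=Q_{\gamma,\ell}(cx)$ the type is constant on cosets of $S_\ell$, so the type on $S_\ell$ is forced to be opposite to the common type off $S_\ell$; but this cannot decide which of the two is $+1$. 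That absolute sign still requires a genuine evaluation (Baumert--McEliece semiprimitive Gauss sums, Stickelberger, or a Davenport--Hasse induction on $m_\ell$, which is essentially what Klapper does in \cite{K1}). As written, your argument proves the rank assertions but not the type assertions of parts (a) and (b).
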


For $q$ odd and $N \in \N$, consider the following set of integers 
\begin{equation} \label{Xqml}
Y_{q,m,\ell}(N) = \{0 \le t \le q^m-1 \, : \, t \equiv N  \, (\text{mod } L) \}
\end{equation}
where $L=q^{(m,\ell)}+1$.

\goodbreak 
\begin{thm}[odd characteristic (\cite{K2})] \label{Thmimpar}
Let $q$ be a power of an odd prime $p$ and let $m, \ell$ be non negative integers. Put $\gamma = \alpha^{t}$ with $\alpha$ a primitive element in $\ff_{q^m}$. Then, we have:
	\begin{enumerate}[$(a)$]
		\item If $\varepsilon_\ell = 1$ and $t \in Y_{q,m,\ell}(0)$ 
		then $Q_{\gamma,\ell}$ is of type $-1$ and has rank $m-2(m,\ell)$. \sk
		
		\item If $\varepsilon_\ell = 1$ and $t\notin Y_{q,m,\ell}(0)$ 
		then $Q_{\gamma,\ell}$ is of type $1$ and has rank $m$. \sk
		
		\item If $m_\ell$ is even, $\varepsilon_\ell = - 1$ and $t \in Y_{q,m,\ell}(\tfrac L2)$ 
		then $Q_{\gamma,\ell}$ is of type $1$ and has rank $m-2{(m,\ell)}$. \sk 
		
		\item If $m_\ell$ is even, $\varepsilon_\ell = -1$ and $t \notin Y_{q,m,\ell}(\tfrac L2)$ 
		then $Q_{\gamma,\ell}$ is of type $-1$ and has rank $m$.
\end{enumerate}
\end{thm}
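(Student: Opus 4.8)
This is Klapper's theorem; the plan is to prove it by the classical two-step recipe for the invariants of a quadratic form in odd characteristic: read off the \emph{rank} from the polarization, and the \emph{type} from a Weil sum via Lemma~\ref{LsumT}. Since $q$ is odd, $Q_{\gamma,\ell}$ is recovered from its polarization, the symmetric bilinear form $B(x,y)=Q_{\gamma,\ell}(x+y)-Q_{\gamma,\ell}(x)-Q_{\gamma,\ell}(y)$, via $Q_{\gamma,\ell}(x)=\tfrac12 B(x,x)$; hence $\operatorname{rank}(Q_{\gamma,\ell})=\operatorname{rank}(B)=m-\dim_{\ff_q}\operatorname{rad}(B)$. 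So I would first compute $\operatorname{rad}(B)$ explicitly, which delivers the rank together with the congruence on $t=\log_\alpha\gamma$ that decides it, and then, knowing the rank $r$, pin down the type through Lemma~\ref{LsumT}, which gives $\varepsilon_{Q_{\gamma,\ell}}=q^{-(m-r/2)}\,T_{Q_{\gamma,\ell}}$; thus the type is exactly the sign of the Weil sum
\[
T_{Q_{\gamma,\ell}}=\sum_{x\in\ff_{q^m}}\zeta_p^{\Tr_{q^m/p}(\gamma x^{q^\ell+1})}.
\]

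\emph{The rank.} Expanding $(x+y)^{q^\ell+1}=x^{q^\ell+1}+x^{q^\ell}y+xy^{q^\ell}+y^{q^\ell+1}$ gives $B(x,y)=\Tr_{q^m/q}(\gamma(x^{q^\ell}y+xy^{q^\ell}))$; applying the Frobenius power $z\mapsto z^{q^{m-\ell}}$ inside the trace to the second summand rewrites $B$ as $B(x,y)=\Tr_{q^m/q}(L_\gamma(x)\,y)$ with $L_\gamma(x)=\gamma x^{q^\ell}+\gamma^{q^{m-\ell}}x^{q^{m-\ell}}$, an $\ff_q$-linear map. By nondegeneracy of the trace form, $\operatorname{rad}(B)=\ker L_\gamma$, and raising $L_\gamma(x)=0$ to the $q^\ell$-th power shows a nonzero $x$ lies in $\ker L_\gamma$ iff $x^{q^{2\ell}-1}=-\gamma^{1-q^\ell}$. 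Since $m_\ell$ is even, $(2\ell,m)=2(m,\ell)$, so $\gcd(q^{2\ell}-1,q^m-1)=q^{2(m,\ell)}-1$; hence $\ker L_\gamma$ is either $\{0\}$ or an $\ff_q$-subspace of dimension $2(m,\ell)$, i.e.\ $r\in\{m,\ m-2(m,\ell)\}$, the reduced value occurring exactly when $-\gamma^{1-q^\ell}$ is a $(q^{2\ell}-1)$-th power in $\ff_{q^m}^*$. Writing $\gamma=\alpha^t$ and $-1=\alpha^{(q^m-1)/2}$, this condition is $q^{2(m,\ell)}-1\mid(1-q^\ell)t+\tfrac{q^m-1}{2}$; using $q^{(m,\ell)}\equiv-1\pmod{L}$ (whence $q^\ell\equiv-1$ and $q^m\equiv1\pmod L$, where $L=q^{(m,\ell)}+1$) one checks that the part modulo $q^{(m,\ell)}-1$ is automatic, while the part modulo $L$ reduces to $t\equiv 0\pmod L$ when $\tfrac12 m_\ell$ is even and to $t\equiv \tfrac L2\pmod L$ when $\tfrac12 m_\ell$ is odd. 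This is precisely ``$t\in Y_{q,m,\ell}(0)$ if $\varepsilon_\ell=1$'' and ``$t\in Y_{q,m,\ell}(\tfrac L2)$ if $\varepsilon_\ell=-1$'', giving the rank assertions of (a)--(d).

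\emph{The type.} Here I would group the $x\in\ff_{q^m}$ by the value $u=x^{q^\ell+1}$, which ranges over the index-$L$ subgroup $S_{q,m}(\ell)$ of $\ff_{q^m}^*$ (as $\gcd(q^\ell+1,q^m-1)=L$ by Lemma~\ref{mcd}), and expand the indicator of $S_{q,m}(\ell)$ over the $L$ multiplicative characters trivial on it, obtaining the standard identity $T_{Q_{\gamma,\ell}}=\sum_{\chi}\overline{\chi}(\gamma)\,g(\chi)$, where $\chi$ runs over the nontrivial characters of $\ff_{q^m}^*$ of order dividing $L$ and $g(\chi)$ is the corresponding Gauss sum. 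Every such $\chi$ has order $n\mid q^{(m,\ell)}+1$, so $q^{(m,\ell)}\equiv-1\pmod n$: this is exactly the semiprimitive (uniform cyclotomy) case, in which $g(\chi)$ is evaluated explicitly as $\pm q^{m/2}$, the sign being governed by $(-1)^{\frac12 m_\ell}=\varepsilon_\ell$ (the relevant parameter $f/(2j)$ of the semiprimitive formula being controlled by $\tfrac12 m_\ell = sm/(2s(m,\ell))$), up to a correction depending on the parity of $n$ and on $p\bmod 4$. Substituting this and evaluating the resulting character sums $\sum_{\operatorname{ord}(\chi)=n}\overline{\chi}(\gamma)$ (Ramanujan-type sums, since $\overline{\chi}(\gamma)=\overline{\chi}(\alpha)^t$ depends only on $t\bmod n$) — which add up coherently to $\pm q^{(m,\ell)}$ in the coset that forces the reduced rank, and collapse to a single term producing $q^{m/2}$ otherwise — yields $T_{Q_{\gamma,\ell}}=\varepsilon_\ell\,q^{m/2}$ in the full-rank cases (b),(d) and $T_{Q_{\gamma,\ell}}=-\varepsilon_\ell\,q^{m/2+(m,\ell)}$ in the reduced-rank cases (a),(c). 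In both cases this is $\pm q^{m-r/2}$, and Lemma~\ref{LsumT} then reads off $\varepsilon_{Q_{\gamma,\ell}}=\varepsilon_\ell$ (full rank) and $\varepsilon_{Q_{\gamma,\ell}}=-\varepsilon_\ell$ (reduced rank), which is the type claimed in each of (a)--(d).

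\emph{The main obstacle.} The conceptual steps above are routine; the real work lies in the sign bookkeeping of the type computation. One must carry the semiprimitive Gauss-sum evaluation in a form uniform over all $\chi$ of order dividing $L$, track the factor $\overline{\chi}(\gamma)=\overline{\chi}(\alpha)^t$ carefully enough that the outcome genuinely separates $t\in Y_{q,m,\ell}$ from its complement, correctly absorb the trivial-character term, and verify that the auxiliary sign factors (from $\chi^{n/2}$ when $n$ is even, and from the $p\equiv1$ versus $p\equiv3\pmod 4$ split inside the formula) collapse to the single invariant $\varepsilon_\ell$. A somewhat less delicate route is to compute $N_{Q_{\gamma,\ell}}(0)$ from the same character expansion and match it against \eqref{NQ+-}, trading the Gauss-sum sign for a point count; but this still rests on the semiprimitive evaluation for the main term. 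Either way, the essential structural input is that the factorizations of $q^\ell+1$ push all the relevant Gauss sums into the semiprimitive regime, which is ultimately why the invariants come out as cleanly as claimed. The detailed argument is carried out in \cite{K2}.
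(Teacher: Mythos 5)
The paper offers no proof of Theorem \ref{Thmimpar}: it is imported verbatim from Klapper and used as a black box, the citation \cite{K2} standing in for the argument. So there is no internal proof to compare yours against; judged on its own, your outline is the standard route and its conclusions agree with everything the paper later extracts from the theorem. The rank half is essentially right: $\mathrm{rad}(B)=\ker\bigl(\gamma x^{q^\ell}+\gamma^{q^{m-\ell}}x^{q^{m-\ell}}\bigr)$ has $\ff_q$-dimension $0$ or $2(m,\ell)$, the reduced case occurring exactly when $q^{2(m,\ell)}-1 \mid (1-q^\ell)t+\tfrac{q^m-1}{2}$, and this is indeed equivalent to $t\in Y_{q,m,\ell}(0)$ (resp. $Y_{q,m,\ell}(\tfrac L2)$) when $\tfrac12 m_\ell$ is even (resp. odd). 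The Gauss-sum values you predict, $\varepsilon_\ell\, q^{m/2}$ at full rank and $-\varepsilon_\ell\, q^{m/2+(m,\ell)}$ at reduced rank, are precisely the values of $T_{Q_{\gamma,\ell}}$ that the paper feeds into the proof of Theorem \ref{Spec Gamma}, so the statement you are aiming at is the right one.

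Two caveats. First, your intermediate reduction (``the part modulo $q^{(m,\ell)}-1$ is automatic, the part modulo $L$ gives the congruence on $t$'') does not work as stated: since $\gcd(q^{(m,\ell)}-1,\,L)=2$, the condition modulo $L$ alone only yields $2t\equiv 0$, i.e. $t\equiv 0\pmod{L/2}$, with no dependence on $\varepsilon_\ell$; the parity of $\tfrac12 m_\ell$ enters through the shared factor $2$, because $\tfrac{q^m-1}{2}\equiv \tfrac{(q^{(m,\ell)}-1)L}{2}\cdot\bigl(\tfrac12 m_\ell\bigr)\pmod{q^{2(m,\ell)}-1}$, so one must argue modulo $q^{2(m,\ell)}-1$ directly. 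Your final criterion is nonetheless correct, so this is a repairable slip rather than a fatal gap. Second, the type half is only a strategy: the semiprimitive evaluation of the Gauss sums and the sign bookkeeping that actually separates the coset $Y_{q,m,\ell}(\cdot)$ from its complement --- which is the real content of Klapper's theorem --- is explicitly deferred to \cite{K2}. Since the paper itself does no more than cite \cite{K2}, that is acceptable here, but be aware that the decisive work lives in the step you have delegated.
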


\subsection{The spectrum of $\Gamma_{q,m}(\ell)$}
It is well-known that $Spec(K_n) = \{ [n-1]^1, [-1]^{n-1}\}$ and 
$$Spec(P(q)) = \{ [\tfrac{q-1}2]^1, [\tfrac{-1-\sqrt q}2]^{2t}, [\tfrac{-1+\sqrt q}2]^{2t}\}$$
where $q=4t+1$. Hence $K_n$ is always integral and $P(q)$ is integral for $q$ a square $q=p^{2m}$.
We now 
	compute the spectra of the graphs in the family $\mathcal{G}_{q,m}$ (thus completing the determination of the spectrum of all the graphs $\Gamma_{q,m}(\ell)$, see \eqref{Gamas}). 
\begin{thm} 
\label{Spec Gamma}
Let $q$ be a prime power and $m\ge 2, \ell > 0$ integers such that $\ell \mid m$ with $\tfrac{m}{\ell}$ even.  
{\blue Thus,} the spectrum of $\Gamma_{q,m}(\ell)$ is integral and given as follows: 
\begin{enumerate}[$(a)$]
\item If $\ell \ne \frac m2$, then the eigenvalues are 
\begin{equation} \label{Autval}
	k_\ell = \frac{q^{m}-1}{q^\ell+1}, \qquad \quad \upsilon_\ell = \frac{\varepsilon_\ell \, q^{\frac{m}{2}}-1}{q^\ell+1}, \qquad \quad 
	\mu_\ell = \frac{-\varepsilon_\ell \, q^{\frac{m}{2}+\ell}-1}{q^\ell+1},
\end{equation} 
where $\varepsilon_\ell = (-1)^{\frac 12 m_\ell}$, with corresponding multiplicities $1$, $q^\ell k_\ell$ and $k_\ell$. \sk  

\item If $\ell = \frac m2$, then the eigenvalues are 
$k_{\frac m2} = q^{\frac m2} -1$ and $\upsilon_{\frac m2} = -1$ with corresponding multiplicities $q^{\frac m2}$ and
 $q^{\frac m2}(q^{\frac m2}-1)$. 
\end{enumerate}
\end{thm}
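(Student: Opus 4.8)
The plan is to compute the character sums $\chi(S_\ell)$ from \eqref{Eigencayley} directly, translating them into the exponential sums $T_{Q_{\gamma,\ell}}$ governed by Klapper's theorems, and then count how many characters give each value. First I would recall that the nontrivial additive characters of $\ff_{q^m}$ are $\chi_\beta(x) = \zeta_p^{\Tr_{q^m/p}(\beta x)}$ for $\beta \in \ff_{q^m}^*$, and the trivial character contributes the eigenvalue $\lambda_1 = |S_\ell| = k_\ell$ with multiplicity $1$. For $\beta \neq 0$, using that $S_\ell \cup \{0\}$ is (up to the factor $q^\ell+1$) the image set of $x \mapsto x^{q^\ell+1}$, I would write
\begin{equation*}
\chi_\beta(S_\ell) = \sum_{y \in S_\ell} \zeta_p^{\Tr_{q^m/p}(\beta y)} = \frac{1}{q^\ell+1} \Bigl( \sum_{x \in \ff_{q^m}} \zeta_p^{\Tr_{q^m/p}(\beta x^{q^\ell+1})} - 1 \Bigr),
\end{equation*}
since each nonzero value in $S_\ell$ is hit exactly $(q^\ell+1)$ times and $x=0$ contributes $1$. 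Writing $\Tr_{q^m/p} = \Tr_{q/p}\circ \Tr_{q^m/q}$, the inner sum is exactly $T_{Q_{\beta,\ell}}$ with $Q_{\beta,\ell}(x) = \Tr_{q^m/q}(\beta x^{q^\ell+1})$ as in \eqref{Qell}; a subtlety is that the definition \eqref{TQ} uses $\Tr_{q/p}(aQ(x))$ with $a \in \ff_q^*$, but here $\beta$ ranges over all of $\ff_{q^m}^*$, so I must keep $\beta$ inside the form. That is fine: $Q_{\beta,\ell}$ is itself one of the forms in Klapper's family for every $\beta \in \ff_{q^m}^*$.

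Next I would invoke Lemma~\ref{LsumT}: since $Q_{\beta,\ell}$ always has even rank (Theorems~\ref{Thmpar} and \ref{Thmimpar}, using $m_\ell$ even), we get $T_{Q_{\beta,\ell}} = \varepsilon_{Q_{\beta,\ell}} \, q^{m - r_\beta/2}$ where $r_\beta$ is the rank. By Klapper's theorems there are exactly two possibilities: either $r_\beta = m$ with type $\varepsilon_{Q_{\beta,\ell}} = \varepsilon_\ell$ (in even characteristic for $\beta \notin S_\ell$; in odd characteristic for the analogous "non-distinguished" residue class of $t$), giving
\begin{equation*}
\chi_\beta(S_\ell) = \frac{\varepsilon_\ell q^{m/2} - 1}{q^\ell + 1} = \upsilon_\ell,
\end{equation*}
or $r_\beta = m - 2(m,\ell) = m - 2\ell$ with type $-\varepsilon_\ell$, giving
\begin{equation*}
\chi_\beta(S_\ell) = \frac{-\varepsilon_\ell \, q^{m - (m-2\ell)/2} - 1}{q^\ell+1} = \frac{-\varepsilon_\ell \, q^{m/2 + \ell} - 1}{q^\ell+1} = \mu_\ell.
\end{equation*}
(I should double-check the sign bookkeeping: in Theorem~\ref{Thmpar}(a) with $\varepsilon_\ell = \pm 1$ and $\gamma \in S_\ell$ the type is $\mp 1 = -\varepsilon_\ell$ and the rank is $m - 2\ell$, matching the $\mu_\ell$ line; part (b) gives type $\varepsilon_\ell$ and rank $m$, matching $\upsilon_\ell$. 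The odd case is organized by the residue sets $Y_{q,m,\ell}$ but yields the same two pairs (rank, type), so the two eigenvalue values $\upsilon_\ell, \mu_\ell$ are the same.) These are manifestly integers since $q^\ell + 1 \mid q^{m/2}\mp 1$ when $m_\ell = m/\ell$ is even (this is the content of Lemma~\ref{SLcSl} / Lemma~\ref{mcd}), establishing integrality.

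It then remains to count multiplicities. The multiplicity of $\mu_\ell$ is the number of $\beta \in \ff_{q^m}^*$ for which $Q_{\beta,\ell}$ has rank $m-2\ell$: in even characteristic this is $|S_\ell| = k_\ell$ directly (Theorem~\ref{Thmpar}(a)), and in odd characteristic it is $|Y_{q,m,\ell}(0)|$ or $|Y_{q,m,\ell}(L/2)|$ according to $\varepsilon_\ell$, each of which has size $\frac{q^m - 1}{L} = \frac{q^m-1}{q^\ell+1} = k_\ell$. The multiplicity of $\upsilon_\ell$ is then $(q^m - 1) - k_\ell = q^\ell k_\ell$, since $\frac{q^m-1}{q^\ell+1}(q^\ell + 1) - \frac{q^m-1}{q^\ell+1} = q^\ell \cdot \frac{q^m-1}{q^\ell+1}$. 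As a consistency check one can verify $1 \cdot k_\ell + q^\ell k_\ell \cdot \upsilon_\ell + k_\ell \cdot \mu_\ell = 0$ (the trace of the adjacency matrix) and that the multiplicities sum to $q^m$. For part (b), $\ell = m/2$: then $m - 2\ell = 0$, so the "rank $m-2\ell$" case degenerates — the form $Q_{\beta,\ell}$ is identically zero for those $\beta$, the sum $T_{Q_{\beta,\ell}} = q^m$, and $\chi_\beta(S_\ell) = \frac{q^m - 1}{q^{m/2}+1} = q^{m/2}-1 = k_{m/2}$, which is just $\lambda_1$ reappearing; meanwhile $\varepsilon_\ell = (-1)^{1} = -1$ and the rank-$m$ case gives $\upsilon_{m/2} = \frac{-(-1)q^{m} \cdot q^{-0}\dots}{}$—more simply $\frac{-\varepsilon_\ell q^{m/2 + m/2} - 1}{q^{m/2}+1} = \frac{q^m - 1}{q^{m/2}+1} \cdot \frac{1}{?}$, so I would instead just recompute: with $r_\beta = m$, $T = \varepsilon_\ell q^{m/2} = -q^{m/2}$, giving $\chi_\beta(S_\ell) = \frac{-q^{m/2}-1}{q^{m/2}+1} = -1$. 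The multiplicity of $k_{m/2}$ beyond the trivial character is $k_{m/2} = q^{m/2}-1$ (count of $\beta$ with $Q_{\beta,\ell} \equiv 0$; since $S_\ell$ has index $q^{m/2}+1$, this count is $\frac{q^m-1}{q^{m/2}+1} - $ adjustments, working out to $q^{m/2}-1$, so total multiplicity $q^{m/2}$), and the rest have eigenvalue $-1$ with multiplicity $q^m - q^{m/2} = q^{m/2}(q^{m/2}-1)$, consistent with $\Gamma_{q,m}(m/2) = q^{m/2}K_{q^{m/2}}$.

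\textbf{Main obstacle.} The genuine work is in the multiplicity counts in odd characteristic: translating Klapper's parametrization by residue classes $Y_{q,m,\ell}(N)$ into clean cardinalities, and checking that the four cases (a)--(d) of Theorem~\ref{Thmimpar} — which split by $\varepsilon_\ell = \pm 1$ — all collapse to the same two eigenvalues $\upsilon_\ell, \mu_\ell$ with the same multiplicities, so that the stated formulas are uniform. The even-characteristic case is comparatively immediate. A secondary care point is the degenerate behaviour at $\ell = m/2$, where "rank $m - 2(m,\ell)$" becomes rank $0$ and one must handle the identically-zero forms separately rather than blindly plugging into \eqref{Autval}.
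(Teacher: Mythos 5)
Your proposal is correct and follows essentially the same route as the paper's own proof: the character sums are converted to the exponential sums $T_{Q_{\gamma,\ell}}$, evaluated via Lemma~\ref{LsumT} together with Klapper's Theorems~\ref{Thmpar} and \ref{Thmimpar}, the multiplicities are counted through $\#S_\ell$ in even characteristic and the residue sets $Y_{q,m,\ell}$ in odd characteristic, and the case $\ell=\frac m2$ is treated as the degeneration in which $\mu_\ell$ coincides with $k_\ell$. The only minor quibble is the attribution of the integrality divisibilities to Lemmas~\ref{SLcSl}/\ref{mcd}; the paper instead proves them directly from $q^\ell\equiv -1 \pmod{q^\ell+1}$, which is the same elementary congruence you would need anyway.
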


\begin{proof}
\noindent (\textit{a}) 
We know that $\Gamma = \Gamma_{q,m}(\ell)$ is $k_\ell$-regular with $k_\ell = |S_\ell| = \frac{q^m-1}{q^\ell+1}$, by Lemma \ref{Sls}.

Now, the character group $\widehat{\mathbb{F}}_{q^m}$ of 
$(\mathbb{F}_{q^m},+)$ is cyclically generated by the canonical character $\chi(x) = \zeta_{p}^{\Tr_{q^m/p}(x)}$, where $\zeta_{p} = e^{\frac{2\pi i}{p}}$ and $p$ is the 
characteristic of $\ff_q$. 
That is, every character $\chi$ of $\mathbb{F}_{q^m}$ is of the form
\begin{equation} \label{chi gama}
\chi_\gamma(x) = \zeta_{p}^{\Tr_{q^m/p}(\gamma x)} = \chi(\gamma x), \qquad \gamma\in \mathbb{F}_{q^m}.
\end{equation} 
By \eqref{Eigencayley} the eigenvalues of $\Gamma$ are of the form
$\chi_{\gamma}(S_\ell) = \sum_{y\in S_\ell} \chi_{\gamma}(y)$.
In this way, for $\gamma \in \mathbb{F}_{q^m}$, by \eqref{chi gama} and \eqref{Sqml}, since every $x^{q^\ell+1} \in S_\ell$ is obtained
by $q^\ell+1$ different elements in $\ff_{q^m}$ we have  
\begin{equation} \label{chi gama 2}
\chi_{\gamma}(S_\ell) = \tfrac{1}{q^\ell+1} \sum_{x\in \mathbb{F}_{q^m}^*} \zeta_{p}^{\Tr_{q^m/p}(\gamma x^{q^\ell+1})} 
= \tfrac{1}{q^\ell+1} \sum_{x\in \mathbb{F}_{q^m}^*} \zeta_{p}^{\Tr_{q/p} \Tr_{q^m/q}(\gamma x^{q^\ell+1})}.
\end{equation}
That is to say, if $Q_{\gamma,\ell}(x)$ is the quadratic form defined in \eqref{Qell}, we have 
\begin{equation*} \label{gamma aux1}
\chi_{\gamma}(S_{\ell}) = \tfrac{1}{q^{\ell}+1} \sum_{x\in \mathbb{F}_{q^m}^*} \zeta_{p}^{\Tr_{q/p} (Q_{\gamma,\ell}(x))}.
\end{equation*} 
Thus, by \eqref{TQ} and taking into account the contribution of $x=0$, we have {\blue that}
\begin{equation} \label{gamma aux2}
\chi_{\gamma}(S_{\ell}) = \tfrac{T_{Q_{\gamma,\ell}}-1}{q^\ell+1}
\end{equation} 
where $T_{Q_{\gamma,\ell}}$ is the exponential sum given in  \eqref{TQ}. 

By \eqref{gamma aux2}, the eigenvalues of $\Gamma$ are given by the different values that $T_{Q_{\gamma,\ell}}$ can take. 
If $\gamma=0$, then $Q_{0,\ell}=0$ and by 
\eqref{TQ} we have that $T_Q=q^m$, obtaining the eigenvalue 
$$\chi_0(S_\ell) = \tfrac{q^m-1}{q^\ell+1} = k_\ell.$$ 

For the other eigenvalues, first note that by Theorems \ref{Thmpar} and \ref{Thmimpar}, since $m_\ell$ is even, the quadratic form
$Q_{\gamma,\ell}$ has even rank $r$ for every $\gamma$ and also $r$ takes only two possible values, 
$r \in \{ m, m-2\ell \}$.     
By Lemma \ref{LsumT}, we have that
\begin{equation} \label{TQeigen}
T_{Q_{\gamma,\ell}} = \varepsilon_{Q_{\gamma,\ell}} \, q^{m-\frac r2}.
\end{equation}
By parts ($a$) and ($b$) of Theorems \ref{Thmpar} and \ref{Thmimpar}, 
if $\varepsilon_{\ell}=(-1)^{\frac{1}{2}m_{\ell}}$ we have that 
$\varepsilon_{Q_{\gamma,\ell}}=\varepsilon_\ell $ if $r=m$ and 
$\varepsilon_{Q_{\gamma,\ell}}=-\varepsilon_\ell $ if $r=m-2\ell$.
Taking into account the two possible values of $r$ and the two signs in \eqref{TQeigen}, 
by \eqref{gamma aux2} we have that the eigenvalues of $\Gamma$ are exactly those given in \eqref{Autval}.

\sk
Next, we compute the multiplicities of the eigenvalues. First, since the eigenvalue $k_\ell$ is obtained from 
$Q_{0,\ell}(x) =0$ and we have that $T_{Q_{\gamma,\ell}} = \varepsilon_{Q_{\gamma,\ell}} \, q^{m-\frac r2} \ne q^m$ for $\gamma\ne 0$, it is clear that the multiplicity of $k_\ell$ is $1$. 
The eigenvalues 
$$\upsilon_\ell^{\pm} = \tfrac{\pm q^{\frac m2}-1}{q^\ell+1} \qquad \text{ and } \qquad  
\mu_\ell^{\pm} = \tfrac{\pm q^{\frac m2+\ell}-1}{q^\ell+1}$$ 
are obtained when $r=m$ and $r=m-2\ell$, respectively. 

If $q$ is even, 
Theorem \ref{Thmpar} give us the multiplicities of the eigenvalues; they are given by 
$$q^m-1-M, \, 0, \, 0, \, M \qquad \text{and} \qquad 0, \, q^m-1-M, \, M,\,  0$$
respectively for $\tfrac 12 m_\ell$ even and $\tfrac 12 m_\ell$ odd, where $M=\# S_{q,m}(\ell)$ with $S_{q,m}(\ell)$ as in \eqref{Sqml}.
If $\alpha$ is a primitive element of $\ff_{q^m}$, then $S_{q,m}(\ell) = \langle \alpha^{q^\ell +1} \rangle$. 
Thus, $M = \text{ord}(\alpha^{q^\ell+1}) = \tfrac{q^m-1}{(q^m-1,q^\ell+1)}$ and therefore 
$M = \frac{q^m-1}{q^\ell+1}=k_\ell$, by Lemma \ref{mcd} since $m_\ell$ is even.
 
On the other hand, if $q$ is odd, by Klapper's Theorem \ref{Thmimpar}, the corresponding multiplicities are given by
$$q^m-1-M_1, \, 0, \, 0, \, M_1 \qquad \text{ and } \qquad 0, \, q^m-1-M_2, \, M_2, \, 0$$ 
respectively for $\tfrac 12 m_\ell$ even and $\tfrac 12 m_\ell$ odd, where 
$M_1$ and $M_2$ are given by
\begin{equation} \label{T1}
M_1 =  \# Y_{q,m,\ell}(0) \qquad \text{ and } \qquad M_2 = \# Y_{q,m,\ell}(\tfrac L2),
\end{equation}
with $L=q^{(m,\ell)}+1$ and 
$Y_{q,m,\ell}(N)$ as in \eqref{Xqml}.

Let us see that $M_1=M_2=k_\ell$, and thus the multiplicities of the eigenvalues are as in the statement.
Notice that if $M, N, s_{1}, s_{2} \in \N$ 
such that $M \mid N$ and $0\leq s_{1},s_{2}\leq M-1$, then  
$$\#\{ 0 \le i \le N : i \equiv s_{1} \, (\text{mod } M) \} = \#\{ 1\le i \le N :  i\equiv s_{2} \, (\text{mod } M) \}  = 
\tfrac NM.$$ 
Since $k_\ell = \# S_\ell = \frac{q^m-1}{q^\ell+1} \in \N$ we have that $q^\ell+1 \mid q^{m}-1$. Thus, 
taking $M=q^\ell+1$ and $N=q^m-1$, from \eqref{T1} we get that 
$M_1 =M_2 = \frac{q^m-1}{q^\ell+1} = k_\ell$, as desired.

\medskip
\noindent (\textit{b}) 
Note that $k_\ell= \mu_\ell$ if and only if $\ell =\frac m2$ and $\varepsilon_\ell=-1$ (but this is automatic for $\ell= \frac m2$).
By \eqref{Autval}, it is clear that $k_{\frac m2} = \frac{q^m-1}{q^{\frac m2}+1} = q^{\frac m2}-1$ and $\upsilon_{\frac m2} = -1$. The multiplicity of $k_{\frac m2}$ is now $k_{\frac m2}+1=q^{\frac m2}$, showing ($b$).

\smallskip
It remains to show the integrality of the spectrum. 
For $\ell=\frac m2$ it is obvious. For $\ell\ne \frac m2$, we already know that $k_\ell \in \N$, hence 
it is enough to show that $\upsilon_\ell, \mu_\ell \in \Z$, i.e.\@ that 
$$q^\ell+1 \mid q^{\frac{m}{2}} - (-1)^{\frac{1}{2}m_{\ell}} \qquad \text{and} \qquad  
q^\ell+1 \mid q^{\frac{m}{2}+\ell} + (-1)^{\frac{1}{2}m_{\ell}}.$$ 
Note that since $m_{\ell}$ is even, then $\ell \mid \tfrac{m}{2}$.
Let $s,t$ be positive integers such that $s\mid t$. Since 
$q^s\equiv -1 \pmod{q^s+1}$ then $q^{t} = (q^s)^{\frac ts} \equiv (-1)^{\frac ts} \pmod{q^s+1}$.  
Thus, taking $t=\tfrac{m}{2}$ and $s=\ell$ we get that 
$q^{\frac{m}{2}}-(-1)^{\frac{1}{2}m_{\ell}} \equiv 0 \pmod{q^\ell+1}$, 
and multiplying this congruence by $q^\ell$, since $q^{\ell} \equiv -1 \pmod{q^\ell+1}$, we get that $\Gamma_{q,m}(\ell)$ has integral spectrum, and the result follows. 
\end{proof}

Notice that if we take $\ell=0$ in \eqref{Autval}, with the convention $\varepsilon_0=1$, 
we get the spectrum of the Paley graphs. 

\begin{note} \label{note1}
If $\tfrac 12 m_\ell$ is even we have 
$\lambda_2 = \upsilon_\ell$ and $\lambda_3 = \mu_\ell$ while if 
$\tfrac 12 m_\ell$ is odd 
$\lambda_2 = \mu_\ell$ and $\lambda_3 = \upsilon_\ell$. 
In both cases $\lambda_2 >0$ and $\lambda_3 <0$. 
\end{note}

We now give some relations on the eigenvalues and on the parameters of $\Gamma_{q,m}(\ell)$. 
Denote by $e_\ell$ the cardinality of the edge set $E_{q,m}(\ell)$ of $\Gamma_{q,m}(\ell)$.

\begin{coro} \label{prop eigens}
Let $k_\ell, \upsilon_\ell$ and $\mu_\ell$ be the eigenvalues of $\Gamma_{q,m}(\ell) \in \mathcal{G}_{q,m}$ as given in \eqref{Autval}. 

\begin{enumerate}[$(a)$]
\item We have $k_\ell = (\varepsilon_\ell \, q^{\frac m2} + 1) \upsilon_\ell$ and also $-q^\ell \upsilon_\ell = \mu_\ell +1$ 
and $(\upsilon_\ell, \mu_\ell)=1$.  \sk 

\item For fixed $q$ and $\ell$, we have $\upsilon_\ell(\Gamma_{2m+2\ell}) = \mu_\ell(\Gamma_{2m})$.  \sk 

\item If $\Gamma_{q,m}(d) \subset \Gamma_{q,m}(\ell)$ then  
$k_d \mid k_\ell$, $\upsilon_d \mid \upsilon_\ell$ and $e_d\mid e_\ell$.
\end{enumerate}
\end{coro}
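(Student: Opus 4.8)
The plan is to derive all three items from the closed-form expressions for $k_\ell,\upsilon_\ell,\mu_\ell$ in \eqref{Autval} (Theorem~\ref{Spec Gamma}), combined with the combinatorial description of graph inclusions in Proposition~\ref{teo subg} and Lemma~\ref{SLcSl}; the work is essentially bookkeeping, and I would organize it in three short blocks.

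For (a) I would simply substitute the formulas of \eqref{Autval}. Since $\varepsilon_\ell^2=1$, one has $(\varepsilon_\ell q^{m/2}+1)\upsilon_\ell=\frac{(\varepsilon_\ell q^{m/2}+1)(\varepsilon_\ell q^{m/2}-1)}{q^\ell+1}=\frac{q^m-1}{q^\ell+1}=k_\ell$; and clearing the denominator $q^\ell+1$ shows that $-q^\ell\upsilon_\ell$ and $\mu_\ell+1$ both equal $\frac{q^\ell-\varepsilon_\ell q^{m/2+\ell}}{q^\ell+1}$, which gives the second identity. Coprimality then drops out for free: rewriting the identity as $\mu_\ell+q^\ell\upsilon_\ell=-1$ shows that any common divisor of $\upsilon_\ell$ and $\mu_\ell$ divides $1$.

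For (b) I would fix $q$ and $\ell$ and treat the eigenvalues as functions of the field parameter, recalling that the sign $\varepsilon_\ell=(-1)^{m/(2\ell)}$ depends on it. The two facts to record when passing from parameter $2m$ to $2m+2\ell$ are $q^{(2m+2\ell)/2}=q^{m+\ell}=q^{(2m)/2+\ell}$ (the power of $q$ appearing in $\mu_\ell$ at parameter $2m$) and that $\varepsilon_\ell$ at parameter $2m+2\ell$ equals $(-1)^{(m+\ell)/\ell}=-(-1)^{m/\ell}$, i.e. minus $\varepsilon_\ell$ at parameter $2m$ (the sign in front of $\mu_\ell$). Comparing the two formulas then yields $\upsilon_\ell(\Gamma_{2m+2\ell})=\mu_\ell(\Gamma_{2m})$.

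For (c) I would first invoke Proposition~\ref{teo subg}(a) with $m_1=m_2=m$: $\Gamma_{q,m}(d)\subset\Gamma_{q,m}(\ell)$ forces $\ell\mid d$ with $d/\ell$ odd, equivalently, by Lemma~\ref{SLcSl}, $q^\ell+1\mid q^d+1$; write $q^d+1=c\,(q^\ell+1)$ with $c\in\N$. Then $k_\ell/k_d=\frac{q^d+1}{q^\ell+1}=c$ gives $k_d\mid k_\ell$; since $\Gamma_{q,m}(\ell)$ is $k_\ell$-regular on $q^m$ vertices, $e_\ell=\tfrac12 q^m k_\ell$ and $e_d=\tfrac12 q^m k_d$, so $e_\ell/e_d=c$ and $e_d\mid e_\ell$. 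For $\upsilon$ I would write $d=k\ell$ with $k$ odd and note that $m/(2\ell)=k\cdot m/(2d)$, whence $\varepsilon_\ell=(-1)^{m/(2\ell)}=(-1)^{m/(2d)}=\varepsilon_d$ because $k$ is odd; then $\upsilon_\ell/\upsilon_d=\frac{q^d+1}{q^\ell+1}=c$ and $\upsilon_d\mid\upsilon_\ell$. The only step that is not purely formal is this last identity $\varepsilon_\ell=\varepsilon_d$: it genuinely uses that $d/\ell$ is odd (not merely that $\ell\mid d$), so that is the one point I would be careful about — without it the divisibility $\upsilon_d\mid\upsilon_\ell$ would fail.
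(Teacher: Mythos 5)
Your proposal is correct and takes essentially the same route as the paper: part (a) by direct substitution in \eqref{Autval} and the identity $q^\ell\upsilon_\ell+\mu_\ell=-1$, part (b) by tracking how $\varepsilon_\ell$ flips when the extension degree increases by $2\ell$, and part (c) via Proposition~\ref{teo subg} and Lemma~\ref{SLcSl} together with the key observation that $\varepsilon_\ell=\varepsilon_d$ because $d/\ell$ is odd. Incidentally, your factorization $q^m-1=(\varepsilon_\ell q^{m/2}+1)(\varepsilon_\ell q^{m/2}-1)$ matches the statement $k_\ell=(\varepsilon_\ell q^{m/2}+1)\upsilon_\ell$ and quietly corrects a sign slip in the paper's own proof, which writes $k_\ell=(\varepsilon_\ell q^{m/2}-1)\upsilon_\ell$.
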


\begin{proof}
To see (\textit{a}), since $(q^m-1) = (q^{\frac{m}{2}}-1)(q^{\frac{m}{2}}+1)$, we have 
$k_\ell = (\varepsilon_\ell \, q^{\frac m2}-1) \upsilon_\ell$, 
by \eqref{Autval}. 
The remaining assertions 
follows from the fact that $q^\ell \upsilon_\ell + \mu_\ell = -1$.
Item (\textit{b}) is clear from \eqref{Autval}.
For (\textit{c}), by Proposition \ref{teo subg} we have $\ell \mid d$ with $\frac{d}{\ell}$ odd and, by Lemma \ref{SLcSl}, 
$q^{\ell}+1 \mid q^{d}+1$. Hence 
$k_d \mid k_\ell$ and if $\varepsilon_\ell=\varepsilon_d$ also $\upsilon_d \mid \upsilon_\ell$. 
But, $\varepsilon_\ell=\varepsilon_d$ always holds since $\tfrac 12 m_\ell$ and $\tfrac 12 m_d$ have the same parity.
Finally, since $2e_d = k_d q^m$ and $k_\ell$ divides $k_d$ we get $e_d\mid e_\ell$. 
\end{proof}

The spectrum of the graphs considered is known and can also be obtained either by using Gauss sums in \eqref{chi gama 2}, by way of parameters of strongly regular graphs or by relating them with the weights of 2-weight cyclic codes (see for instance  \cite{BWX}, \cite{CK} or \cite{vLSch}). For completeness, we have provided an alternative proof using quadratic forms since we want to relate them to families of cyclic codes previously obtained by quadratic forms also (\cite{PV UMA}).		
	
\begin{rem}
In Section 4 of \cite{PV UMA} we have defined an irreducible cyclic code $\mathcal{C}_\ell$ determined by the quadratic forms $Q_{\gamma,\ell}(x)= \Tr_{q^m/q}(\gamma x^{q^\ell+1})$ where $\ell \mid m$ and $\gamma \in \ff_{q^m}^*$. Namely,
	$$\mathcal{C}_\ell = \{ c_\gamma = \big( \Tr_{q^m/q}(\gamma \alpha^{(q^\ell+1)i}) \big)_{i=0}^{n-1} : \gamma \in \ff_{q^m}\}
	\qquad \text{ with } \qquad n=\tfrac{q^m-1}{q^\ell+1}$$
where $\alpha$ is a primitive element in $\ff_{q^m}$. 
In Theorem 4.1 of \cite{PV UMA} we have found the parameters and weight distribution of $\mathcal{C}_\ell$. If $\ell \mid m$ then $\mathcal{C}_\ell$ is an $[n,m,d]_q$-code with 
$d=\tfrac{q^m-1}{q^\ell+1}(q-1)d'$, 
where $d'=q^{\frac m2 -1}$ if $\frac m2$ is even and $d'=q^{\frac m2 -1}-q^\ell$ if $\frac m2$ is odd. Also, $\mathcal{C}_\ell$ is $(p-1)$-divisible 2-weight code with weight distribution given by:
$$w_0=0, \qquad w_1= \tfrac{q-1}{q^\ell+1} (q^{m-1} - \varepsilon_\ell q^{\frac m2 -1}), \qquad 
	w_2= \tfrac{q-1}{q^\ell+1} (q^{m-1}+ \varepsilon_\ell q^{\frac m2 +\ell -1}), $$
with frequencies $1, n, nq^\ell$, respectively. It is easy to check that, if we enumerate the eigenvalues of $\Gamma_{q,m}(\ell)$ as
$\lambda_0 = k_\ell$, $\lambda_1=\nu_\ell$ and $\lambda_2=\mu_\ell$ (in the notation of \eqref{Autval}) then we have 
the simple linear relation 
$$ \lambda_i = n-\tfrac{q}{q-1} w_i, \qquad 1\le i \le 3,$$
where the multiplicities of $\lambda_i$ and the frequencies of $w_i$ coincide. 

This spectral relation holds in more generality (see \cite{PV0}) between the spectrum of GP-graphs 
\begin{equation} \label{Gkq}
		\Gamma(k,q)=X(\ff_q,\{x^k:x\in \ff_q^*\})
\end{equation}
and the weight distributions of the irreducible 
cyclic codes $\mathcal{C}(k,q) = \{ (\Tr_{q/p}(\gamma \omega^{ki}))_{i=0}^{n-1} : \gamma \in \ff_{q} \big \}$, where $\omega$ is a primitive element of $\ff_q$, $n= \frac{q-1}k$ and $q=p^m$. Note that $\Gamma_{q,m}(\ell)=\Gamma(q^{\ell+1},q^m)$.
\end{rem}

\section{Complements and structural properties} \label{sec:4}
Let $\bar \Gamma_{q,m}(\ell)$ be the complement of $\Gamma_{q,m}(\ell)$.
It is the Cayley graph $\bar \Gamma_{q,m}(\ell) = X(\ff_{q^m}, S_\ell^c\smallsetminus\{0\})$, where 
$S_\ell^c$ is the complement of $S_\ell$.
The complementary family of $\mathcal{G}_{q,m}$ is 
\begin{equation} \label{comp fam}
\bar{\mathcal{G}}_{q,m} = \{ \bar{\Gamma}_{q,m}(\ell) \,: \, \ell \mid m \text{ with } \ell \le [\tfrac m2] \text{ and } \tfrac{m}{\ell} \text{ even}\}.
\end{equation}

We begin by showing that for $\ell=\frac m2$ the graphs are disjoint union of copies of complete graphs and hence their complements
are complete multipartite graphs. We recall that the complete $k$-partite graph $K_{k\times m}$ is the graph whose vertex set is partitioned into $k$ independent sets of cardinal $m$, and there is an edge between every pair of vertices from different independent sets. 
\begin{lem} \label{completes}
We have $\Gamma_{q,m}(\frac m2) =  q^{\frac m2}K_{q^{\frac m2}}$ 
and $\bar\Gamma_{q,m}(\frac m2) = K_{q^{\frac m2} \times q^{\frac m2}}$. 
\end{lem}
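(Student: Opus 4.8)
The plan is to identify the connection set $S_{m/2}$ with the multiplicative group of a subfield, and then to read off the structure of $\Gamma_{q,m}(\frac m2)$ directly from the cosets of that subfield.

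First, when $\ell=\frac m2$ we have $(m,\ell)=\frac m2$ and $m_\ell=2$, so by Lemma~\ref{Sls} the set $S_{m/2}=\{x^{q^{m/2}+1}:x\in\ff_{q^m}^*\}$ is a subgroup of $\ff_{q^m}^*$ of order $\frac{q^m-1}{q^{m/2}+1}=q^{m/2}-1$. I claim $S_{m/2}=\ff_{q^{m/2}}^*$. Indeed, the norm map $N_{\ff_{q^m}/\ff_{q^{m/2}}}\colon \ff_{q^m}^*\to\ff_{q^{m/2}}^*$ is surjective and, since $[\ff_{q^m}:\ff_{q^{m/2}}]=2$, it is given by $x\mapsto x\cdot x^{q^{m/2}}=x^{q^{m/2}+1}$; hence its image is exactly $S_{m/2}=\ff_{q^{m/2}}^*$. (Alternatively: $\ff_{q^m}^*$ is cyclic, $q^{m/2}-1\mid q^m-1$, and $\ff_{q^{m/2}}^*$ is a subgroup of that order, hence the unique one, which must be $S_{m/2}$.) Consequently $S_{m/2}\cup\{0\}=\ff_{q^{m/2}}$ is an additive subgroup of $\ff_{q^m}$.

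Now in $\Gamma=X(\ff_{q^m},S_{m/2})$ two vertices $a,b$ are joined precisely when $a-b\in\ff_{q^{m/2}}\smallsetminus\{0\}$, i.e.\ when $a$ and $b$ lie in the same coset of $\ff_{q^{m/2}}$ and $a\ne b$. Hence the connected components of $\Gamma$ are exactly the $[\ff_{q^m}:\ff_{q^{m/2}}]=q^{m/2}$ cosets of $\ff_{q^{m/2}}$ in $\ff_{q^m}$, and on each such coset (of cardinality $q^{m/2}$) the induced subgraph is complete. Therefore $\Gamma_{q,m}(\tfrac m2)=q^{m/2}K_{q^{m/2}}$. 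Finally, the complement of a disjoint union of $k$ copies of $K_n$ is, by definition, the complete $k$-partite graph with all parts of size $n$, so $\bar\Gamma_{q,m}(\tfrac m2)=K_{q^{m/2}\times q^{m/2}}$.

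There is essentially no obstacle here: the only step requiring any care is the identification $S_{m/2}=\ff_{q^{m/2}}^*$, which is immediate from the surjectivity of the norm of $\ff_{q^m}/\ff_{q^{m/2}}$ (or from uniqueness of subgroups of a given order in the cyclic group $\ff_{q^m}^*$); everything else is a direct translation of the Cayley graph definition. As a sanity check, this also recovers the spectrum in Theorem~\ref{Spec Gamma}(b), since $q^{m/2}K_{q^{m/2}}$ has eigenvalue $q^{m/2}-1$ with multiplicity equal to the number $q^{m/2}$ of components and eigenvalue $-1$ with the remaining multiplicity $q^{m/2}(q^{m/2}-1)$.
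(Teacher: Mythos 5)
Your proof is correct, but it proceeds quite differently from the paper's. The paper deduces the lemma spectrally: by Theorem~\ref{Spec Gamma}(b) the graph $\Gamma_{q,m}(\frac m2)$ has the same spectrum as $q^{\frac m2}K_{q^{\frac m2}}$, and since a disjoint union of equal complete graphs is determined by its spectrum (citing van Dam--Haemers), the two graphs coincide; the statement about the complement is then immediate. You instead give a direct structural argument: since $(q^m-1,q^{\frac m2}+1)=q^{\frac m2}+1$, the connection set $S_{m/2}$ is the image of the norm map of $\ff_{q^m}/\ff_{q^{m/2}}$, hence equals $\ff_{q^{m/2}}^*$, so $S_{m/2}\cup\{0\}$ is the subfield $\ff_{q^{m/2}}$ and the Cayley graph splits into the $q^{m/2}$ additive cosets of this subfield, each inducing a $K_{q^{m/2}}$. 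Both arguments are valid; yours is more elementary and self-contained, since it does not rely on Theorem~\ref{Spec Gamma} nor on the external ``determined by its spectrum'' result, and it explains structurally \emph{why} the components are complete graphs (they are cosets of a subfield), which also makes the symmetry of $S_{m/2}$ transparent. The paper's route is shorter given that the spectrum has already been computed, at the cost of invoking the spectral-characterization citation. Your closing sanity check, recovering the multiplicities in Theorem~\ref{Spec Gamma}(b), is a nice bonus: it effectively reverses the paper's logic, confirming that part of the theorem independently.
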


\begin{proof}
Since $Spec(K_n) = \{ [n-1]^{1}, [-1]^{n-1} \}$, the graphs $\Gamma_{q,m}(\frac m2)$ and $q^{\frac m2}K_{q^{\frac m2}}$ 
have the same spectrum, by ($b$) of Theorem \ref{Spec Gamma}. 
But it is known that the graph $q^{\frac m2}K_{q^{\frac m2}}$ is uniquely determined by their spectrum (see for instance \cite{vDH}), implying that both graphs are the same. The remaining assertion is clear. 
\end{proof}

We now show that $\bar \Gamma_{q,m}(\ell)$ is the union of $q^\ell$ different Cayley graphs, all mutually isomorphic to 
$\Gamma_{q,m}(\ell)$. 
Let $\alpha$ be a primitive element of $\ff_{q^m}$. 
Then we have the cosets
$$\alpha^j S_\ell = \{\alpha^{i(q^\ell+1)+j}: 1\le i \le k = \tfrac{q^m-1}{q^\ell+1} \} \qquad \quad (0 \le j \le q^\ell).$$
Denote by $\Gamma_{q,m,\ell}^{(j)}$ the Cayley graph $X(\ff_{q^m}, \alpha^j S_\ell)$ and note that 
$\Gamma_{q,m,\ell}^{(0)} = \Gamma_{q,m}(\ell)$. Thus, $S_\ell$ is symmetric if and only if $\alpha^j S_\ell$ is symmetric, hence 
$\Gamma_{q,m,\ell}^{(j)}$ is undirected if and only if $\Gamma_{q,m}(\ell)$ is undirected.  

\begin{lem}
$\bar \Gamma_{q,m}(\ell) = \Gamma_{q,m,\ell}^{(1)} \cup \cdots \cup \Gamma_{q,m,\ell}^{(q^\ell)}$ and 
$\Gamma_{q,m,\ell}^{(j)} \simeq \Gamma_{q,m}(\ell)$ for every $j=1,\ldots,q^\ell$. 
\end{lem}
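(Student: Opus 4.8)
The statement has two parts: that the complement $\bar\Gamma_{q,m}(\ell)$ decomposes as the union of the graphs $\Gamma_{q,m,\ell}^{(j)}$ for $j=1,\dots,q^\ell$, and that each of these is isomorphic to $\Gamma_{q,m}(\ell)$. The plan is to handle the decomposition first by a direct set-theoretic argument on connection sets, and then establish the isomorphisms by exhibiting an explicit graph isomorphism built from multiplication in $\ff_{q^m}$.

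For the first part, I would start from the observation that $S_\ell$ is a multiplicative subgroup of $\ff_{q^m}^*$ of index $q^\ell+1$ (Lemma~\ref{Sls}), so its cosets $\alpha^j S_\ell$ for $0\le j\le q^\ell$ partition $\ff_{q^m}^*$. Hence $\ff_{q^m}^*\smallsetminus S_\ell = \alpha^1 S_\ell \cup \cdots \cup \alpha^{q^\ell} S_\ell$, and since $\bar\Gamma_{q,m}(\ell)=X(\ff_{q^m},S_\ell^c\smallsetminus\{0\})$ with $S_\ell^c\smallsetminus\{0\}=\ff_{q^m}^*\smallsetminus S_\ell$ (note $0\in S_\ell^c$ is removed and $0\notin S_\ell$), the edge set of $\bar\Gamma_{q,m}(\ell)$ is the union of the connection sets of the $\Gamma_{q,m,\ell}^{(j)}$, which gives the claimed union of graphs on the common vertex set $\ff_{q^m}$. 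One should note these unions are edge-disjoint since the cosets are disjoint, though the statement as phrased only asserts the union.

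For the second part, fix $j\in\{1,\dots,q^\ell\}$ and consider the map $\varphi:\ff_{q^m}\to\ff_{q^m}$ given by $\varphi(x)=\beta x$ where $\beta\in\ff_{q^m}^*$ is chosen so that $\beta S_\ell = \alpha^j S_\ell$; since $S_\ell$ is a subgroup, any $\beta\in\alpha^j S_\ell$ works, e.g.\ $\beta=\alpha^j$. This $\varphi$ is an $\ff_q$-linear bijection of $\ff_{q^m}$, hence a bijection on vertices, and for vertices $x,y$ we have $x-y\in S_\ell \iff \beta(x-y)\in\beta S_\ell=\alpha^j S_\ell \iff \varphi(x)-\varphi(y)\in\alpha^j S_\ell$. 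Thus $\varphi$ sends edges of $\Gamma_{q,m}(\ell)=X(\ff_{q^m},S_\ell)$ exactly to edges of $X(\ff_{q^m},\alpha^j S_\ell)=\Gamma_{q,m,\ell}^{(j)}$, establishing the isomorphism. (This is just the standard fact that scalar multiplication induces isomorphisms between Cayley graphs whose connection sets differ by that scalar.)

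There is no serious obstacle here; the only point requiring a little care is the bookkeeping around $0$ in passing from $S_\ell^c$ to $S_\ell^c\smallsetminus\{0\}$, and confirming that the cosets $\alpha^j S_\ell$ with $1\le j\le q^\ell$ really do exhaust $\ff_{q^m}^*\smallsetminus S_\ell$ without overlap — both of which follow immediately from $S_\ell$ being a subgroup of index $q^\ell+1$. I would present the argument in two short paragraphs matching the two assertions, citing Lemma~\ref{Sls} for the index and the subgroup property.
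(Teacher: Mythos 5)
Your proposal is correct and follows essentially the same route as the paper: decompose $S_\ell^c\smallsetminus\{0\}$ into the cosets $\alpha^j S_\ell$ of the subgroup $S_\ell$ (giving the union of graphs on the common vertex set), and realize each isomorphism $\Gamma_{q,m}(\ell)\simeq\Gamma_{q,m,\ell}^{(j)}$ by multiplication by $\alpha^j$. Your version is, if anything, slightly more careful in stating the edge correspondence as an equivalence rather than a one-way implication.
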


\begin{proof}
It is clear that $S_\ell^c \smallsetminus \{0\}$ equals the disjoint union 
$\alpha S_\ell \cup \alpha^2 S_\ell \cup \cdots \cup \alpha^{q^\ell} S_\ell$. Recall that $G =G_1\cup G_2$ is the graph with vertex set 
$V(G_1) \cup V(G_2)$ and edge set $E(G_1) \cup E(G_2)$. Hence,
$\bar \Gamma_{q,m}(\ell) = \Gamma_{q,m}^{(1)} \cup \cdots \cup \Gamma_{q,m}^{(q^\ell)}$ with 
$V(\Gamma_{q,m}^{(j)}) = V(\Gamma_{q,m}(\ell)) = \ff_q$ 
for every $j=1,\ldots, q^\ell$. To see that 
$\Gamma_{q,m}^{(j)} \simeq \Gamma_{q,m}^{(0)}$ note that the field automorphism $T_j(x) = \alpha^j x$ sends $S_\ell$ to $\alpha^j S_\ell$ and preserves edges, i.e.\@
if $x-y \in S_\ell$ then $T_j(x)-T_j(y) = \alpha^j(x-y) \in \alpha^jS_\ell$. 
\end{proof}

Note that for $\ell=0$ we have 
	$\bar \Gamma_{q,m}(0) = \Gamma_{q,m,0}^{(1)} = \Gamma_{q,m}(0)$, 
recovering the known fact that Paley graphs are self-complementary. 

We now compute the spectra of $\bar \Gamma_{q,m}(\ell)$ in terms of that of $\Gamma_{q,m}(\ell)$ and derive some structural consequences of the graphs from the spectra. 

\goodbreak 
\begin{prop} \label{Spec complement}
The spectrum of $\bar \Gamma_{q,m}(\ell)$ is integral and given as follows: 
\begin{enumerate}[$(a)$] 
\item If $\ell \ne \frac m2$, then the eigenvalues of $\bar \Gamma_{q,m}(\ell)$ are 
\begin{equation} \label{auts comp}
\bar{k}_\ell = \frac{q^\ell (q^m-1)}{q^\ell+1}, \qquad \quad
\bar{\upsilon}_{\ell} = \frac{-q^\ell ( \varepsilon_\ell q^{\frac m2-\ell} + 1)}{q^\ell+1}, \qquad \quad
\bar{\mu}_\ell = \frac{q^\ell ( \varepsilon_\ell q^{\frac m2 }-1)}{q^\ell+1},
\end{equation}
with corresponding multiplicities $1$, $q^\ell k_\ell$ and $k_\ell$, where 
$k_\ell$, $\upsilon_\ell$ are eigenvalues of $\Gamma_{q,m}(\ell)$ as given in Theorem \ref{Spec Gamma} and
$\varepsilon_\ell = (-1)^{\frac 12 m_\ell}$. \sk

\item If $\ell = \frac m2$, the eigenvalues of $\bar \Gamma_{q,m}(\frac m2)$ are 
$\bar k_{\frac m2} = q^{\frac m2}(q^{\frac m2}-1)$, $\bar{\upsilon}_{\frac m2} = 0$ and $\bar \mu_\ell = -q^{\frac m2}$, 
with multiplicities $1$, $q^{\frac m2}(q^{\frac m2}-1)$ and $q^{\frac m2}-1$, respectively.
\end{enumerate}
\end{prop}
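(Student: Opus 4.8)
The plan is to rely on the standard relation between the adjacency matrix of a graph and that of its complement. Set $n=q^m$ and let $J$ be the $n\times n$ all-ones matrix. Since $\bar\Gamma_{q,m}(\ell)$ is the Cayley graph on the same vertex set $\ff_{q^m}$ with connection set $S_\ell^c\smallsetminus\{0\}$, its adjacency matrix satisfies $A(\bar\Gamma_{q,m}(\ell)) = J - I - A(\Gamma_{q,m}(\ell))$. The crucial observation is that $\Gamma_{q,m}(\ell)$ is $k_\ell$-regular, so the all-ones vector $\mathbf 1$ is an eigenvector of $A(\Gamma_{q,m}(\ell))$ with eigenvalue $k_\ell$, while every eigenvector belonging to an eigenvalue different from $k_\ell$ can be chosen orthogonal to $\mathbf 1$ and is therefore annihilated by $J$. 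Consequently each such eigenvector $v$, with $A(\Gamma_{q,m}(\ell))v=\lambda v$, satisfies $A(\bar\Gamma_{q,m}(\ell))v = (-1-\lambda)v$.

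First I would handle the case $\ell\ne\frac m2$. Here $\Gamma_{q,m}(\ell)$ is connected, equivalently $k_\ell$ has multiplicity one (Theorem \ref{Spec Gamma}), so the $k_\ell$-eigenspace is exactly $\langle\mathbf 1\rangle$, on which $J$ acts as multiplication by $n$. Hence $\mathbf 1$ is an eigenvector of $A(\bar\Gamma_{q,m}(\ell))$ with eigenvalue $n-1-k_\ell$, and a one-line simplification gives $q^m-1-\tfrac{q^m-1}{q^\ell+1} = \tfrac{q^\ell(q^m-1)}{q^\ell+1} = \bar k_\ell$, with multiplicity $1$. For the other two eigenvalues $\upsilon_\ell$ and $\mu_\ell$ of $\Gamma_{q,m}(\ell)$, with multiplicities $q^\ell k_\ell$ and $k_\ell$, the corresponding eigenvectors lie in $\mathbf 1^{\perp}$, so they are eigenvectors of $A(\bar\Gamma_{q,m}(\ell))$ with eigenvalues $-1-\upsilon_\ell$ and $-1-\mu_\ell$ and the same multiplicities. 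Substituting the values from \eqref{Autval} and simplifying yields $-1-\upsilon_\ell=\bar\upsilon_\ell$ and $-1-\mu_\ell=\bar\mu_\ell$, and integrality of $Spec(\bar\Gamma_{q,m}(\ell))$ is immediate from that of $Spec(\Gamma_{q,m}(\ell))$.

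For $\ell=\frac m2$ the same idea works but needs a small adjustment, since by Lemma \ref{completes} the graph $\Gamma_{q,m}(\frac m2)=q^{m/2}K_{q^{m/2}}$ is disconnected and $k_{m/2}=q^{m/2}-1$ has multiplicity $q^{m/2}$. One chooses an orthogonal eigenbasis of $A(\Gamma_{q,m}(\frac m2))$ in which $\mathbf 1$ is one of the $k_{m/2}$-eigenvectors and the remaining $q^{m/2}-1$ of them, together with all $\upsilon_{m/2}$-eigenvectors, lie in $\mathbf 1^{\perp}$. Applying $A(\bar\Gamma)=J-I-A(\Gamma)$ then produces the eigenvalue $q^m-1-(q^{m/2}-1)=q^{m/2}(q^{m/2}-1)=\bar k_{m/2}$ with multiplicity $1$, the eigenvalue $-1-(q^{m/2}-1)=-q^{m/2}$ with multiplicity $q^{m/2}-1$, and $-1-(-1)=0$ with multiplicity $q^{m/2}(q^{m/2}-1)$, which is exactly statement (b); alternatively one may simply invoke Lemma \ref{completes}, identifying $\bar\Gamma_{q,m}(\frac m2)=K_{q^{m/2}\times q^{m/2}}$ and quoting the known spectrum of complete multipartite graphs.

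There is no serious obstacle here: the only point requiring care is bookkeeping of multiplicities in the disconnected case $\ell=\frac m2$, where $k$ is not a simple eigenvalue; the rest is the complement formula together with the elementary arithmetic simplification of the expressions in \eqref{Autval}.
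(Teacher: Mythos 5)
Your proof is correct and follows essentially the same route as the paper: the complement relation $A(\bar\Gamma)=J-I-A(\Gamma)$ plus $k$-regularity gives $\bar k_\ell=n-1-k_\ell$ on $\mathbf{1}$ and $-1-\lambda$ on $\mathbf{1}^{\perp}$, which is exactly the paper's argument for part (a). For part (b) the paper simply quotes the spectrum of $K_{q^{m/2}\times q^{m/2}}$ via Lemma \ref{completes}, whereas you additionally carry out the eigenbasis bookkeeping for the disconnected case directly (correctly), and you note the paper's shortcut yourself as an alternative.
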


\begin{proof}
(\textit{a}) If $A$ is the adjacency matrix of $\Gamma = \Gamma_{q,m}(\ell)$, then $J - I - A$ (where $J$ is the all 1's matrix) is the 
adjacency matrix of $\bar \Gamma$. Since $\Gamma$ is $k$-regular with $n$ vertices 
then $\bar \Gamma$ is $(n-k-1)$-regular. Thus $\bar k_\ell = n-k_\ell-1 = q^\ell (q^m-1) / (q^\ell+1)$. 
The remaining $n-1$ eigenvalues of $\bar \Gamma$ are of the form $-1-\lambda_i$ where $\lambda_i$ runs through the $n-1$ eigenvalues 
of $\Gamma$ belonging to an eigenvector orthogonal to $\mathbf{1}$. That is, $\bar \upsilon_\ell = -1-\upsilon_\ell$ and 
$\bar\mu_\ell = -1-\mu_\ell$. The result follows by \eqref{Autval} in Theorem~\ref{Spec Gamma}. 
Moreover, since $\Gamma_{q,m}(\ell)$ is integral by Theorem \ref{Spec Gamma}, $\bar \Gamma_{q,m}(\ell)$ is also integral.

\noindent 
(\textit{b}) It is known that $Spec(K_{a \times a}) = \{[a(a-1)]^{1}, [0]^{a(a-1)}, [-a]^{a-1} \}$, hence it is integral. 
By Lemma \ref{completes}, taking $a=q^{\frac m2}$ we get the result.
\end{proof}

\begin{note} \label{note2}
If $\tfrac 12 m_\ell$ is even we have 
$\bar \lambda_2 = \bar \mu_\ell$ and $\bar \lambda_3 = \bar \upsilon_\ell$ while if 
$\tfrac 12 m_\ell$ is odd 
$\bar \lambda_2 = \bar \upsilon_\ell$ and $\bar \lambda_3 = \bar \mu_\ell$. 
In both cases $\lambda_2 >0$ and $\lambda_3 <0$. 
\end{note}

\begin{coro} \label{prop eigen comps}
The eigenvalues of $\bar \Gamma_{q,m}(\ell)$ are multiples of $q^\ell$ and we have 
$\bar{k}_\ell = q^\ell k_\ell$, $\bar{\mu}_\ell = q^\ell \upsilon_\ell$ and $\bar{\upsilon}_\ell +1 = \frac{\mu_\ell+1}{q^\ell}$. Moreover, 
$(\frac{\bar \upsilon_\ell}{q^\ell}, \frac{\bar \mu_\ell}{q^\ell})=1$ and $\upsilon_\ell + \bar \mu_\ell = 
\bar \upsilon_\ell + \mu_\ell=-1$. 
\end{coro}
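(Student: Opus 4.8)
The plan is to derive every assertion directly from the closed forms of the eigenvalues in Theorem~\ref{Spec Gamma} and Proposition~\ref{Spec complement}, using only the congruence $q^\ell\equiv-1\pmod{q^\ell+1}$, the divisibility $\ell\mid\tfrac m2$ (which holds because $m_\ell$ is even, so that $\tfrac m2=\ell\cdot\tfrac{m_\ell}{2}$), and the elementary relations already recorded in Corollary~\ref{prop eigens}. The case $\ell=\tfrac m2$ is disposed of immediately from part~(b) of Proposition~\ref{Spec complement}: there $\bar k_{m/2}=q^{m/2}(q^{m/2}-1)=q^{m/2}k_{m/2}$, $\bar\mu_{m/2}=-q^{m/2}=q^{m/2}\upsilon_{m/2}$ since $\upsilon_{m/2}=-1$, $\bar\upsilon_{m/2}=0$ is trivially in $q^{m/2}\Z$, and the stated additive identities reduce to $-1=-1$. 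So from here on I assume $\ell\ne\tfrac m2$.

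First I would factor $q^\ell$ out of the numerators in \eqref{auts comp} and compare with \eqref{Autval}: this gives at once $\bar k_\ell=q^\ell\cdot\tfrac{q^m-1}{q^\ell+1}=q^\ell k_\ell$ and $\bar\mu_\ell=q^\ell\cdot\tfrac{\varepsilon_\ell q^{m/2}-1}{q^\ell+1}=q^\ell\upsilon_\ell$. To see that $\bar\upsilon_\ell$ is likewise a multiple of $q^\ell$ it suffices to check $q^\ell+1\mid\varepsilon_\ell q^{m/2-\ell}+1$: since $\ell\mid\tfrac m2$ we have $q^{m/2-\ell}=(q^\ell)^{\frac{m}{2\ell}-1}\equiv(-1)^{\frac{m_\ell}{2}-1}\pmod{q^\ell+1}$, and as $\varepsilon_\ell=(-1)^{m_\ell/2}$ with $m_\ell$ even this yields $\varepsilon_\ell q^{m/2-\ell}\equiv(-1)^{m_\ell-1}\equiv-1\pmod{q^\ell+1}$, which is exactly the required divisibility, so $\bar\upsilon_\ell\in q^\ell\Z$. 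This short congruence is the only step that is not a one-line rewriting, so it is the point to watch; everything else is bookkeeping.

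Finally, for $\bar\upsilon_\ell+1=\tfrac{\mu_\ell+1}{q^\ell}$ I would combine $\bar\upsilon_\ell=-1-\upsilon_\ell$ (established in the proof of Proposition~\ref{Spec complement}) with $-q^\ell\upsilon_\ell=\mu_\ell+1$ from Corollary~\ref{prop eigens}(a), giving $\bar\upsilon_\ell+1=-\upsilon_\ell=\tfrac{\mu_\ell+1}{q^\ell}$; alternatively one simply adds $1$ to the closed form of $\bar\upsilon_\ell$ in \eqref{auts comp} and simplifies. For the coprimality, using $\bar\mu_\ell=q^\ell\upsilon_\ell$ together with $\bar\upsilon_\ell=-1-\upsilon_\ell$ gives the linear relation $q^\ell\cdot\tfrac{\bar\upsilon_\ell}{q^\ell}+\tfrac{\bar\mu_\ell}{q^\ell}=\bar\upsilon_\ell+\upsilon_\ell=-1$, so any common divisor of $\tfrac{\bar\upsilon_\ell}{q^\ell}$ and $\tfrac{\bar\mu_\ell}{q^\ell}$ divides $-1$, proving $\big(\tfrac{\bar\upsilon_\ell}{q^\ell},\tfrac{\bar\mu_\ell}{q^\ell}\big)=1$. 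The remaining additive identities are nothing but the complement relation $\bar\lambda=-1-\lambda$ recorded in the proof of Proposition~\ref{Spec complement} applied to $\upsilon_\ell$ and $\mu_\ell$, namely $\upsilon_\ell+\bar\upsilon_\ell=\mu_\ell+\bar\mu_\ell=-1$, which completes the argument. I do not anticipate any genuine obstacle: the statement is a compilation of algebraic identities among the explicit eigenvalue formulas, the only nontrivial ingredient being the divisibility $q^\ell+1\mid\varepsilon_\ell q^{m/2-\ell}+1$ handled above.
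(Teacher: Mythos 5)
Your argument is correct and is essentially the paper's own: both read $\bar k_\ell=q^\ell k_\ell$ and $\bar\mu_\ell=q^\ell\upsilon_\ell$ directly off the closed forms \eqref{Autval} and \eqref{auts comp}, both establish that $q^\ell$ divides $\bar\upsilon_\ell$ (the paper via integrality of the spectrum together with $(q^\ell,q^\ell+1)=1$, you via the explicit congruence $q^\ell+1\mid \varepsilon_\ell q^{\frac m2-\ell}+1$, a harmless variant), and both deduce the coprimality from the linear relation $q^\ell\bigl(\tfrac{\bar\upsilon_\ell}{q^\ell}\bigr)+\tfrac{\bar\mu_\ell}{q^\ell}=\bar\upsilon_\ell+\upsilon_\ell=-1$; your separate treatment of $\ell=\tfrac m2$ is extra but does no harm. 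One point to flag: the additive identities you prove are $\upsilon_\ell+\bar\upsilon_\ell=\mu_\ell+\bar\mu_\ell=-1$, not the printed $\upsilon_\ell+\bar\mu_\ell=\bar\upsilon_\ell+\mu_\ell=-1$; the printed form is false as written (combined with $\bar\mu_\ell=q^\ell\upsilon_\ell$ it would force $(q^\ell+1)\upsilon_\ell=-1$, i.e.\ $\varepsilon_\ell q^{\frac m2}=0$), so it is evidently a bar-swap typo, and the identities you prove are the intended correct ones --- indeed they are exactly what the paper's own proof uses through $\bar\upsilon_\ell+\upsilon_\ell=-1$.
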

\begin{proof}
By \eqref{auts comp} implies that $q^\ell$ divides $\bar k_\ell, \bar \upsilon_\ell$ and $\bar \mu_\ell$. Also, by \eqref{Autval} we have 
$\bar{k}_\ell = q^\ell k_\ell$ and $\bar{\mu}_\ell = q^\ell \upsilon_\ell$. 
Finally, note that 
$q^\ell \big( \frac{\bar \upsilon_\ell}{q^\ell} \big) + \big( \frac{\bar \mu_\ell}{q^\ell} \big) = -1$,
from which the result follows.
\end{proof}

As a direct consequence of Theorem \ref{Spec Gamma}, we will show that all the graphs considered 
are non-bipartite and primitive (connected with connected complement), with trivial exceptions. 
For $\ell \ne \frac m2$ the graphs are primitive. 

\begin{prop} \label{properties}
For fixed $q$ and $m$, both $\mathcal{G}_{q,m}$ and $\bar{\mathcal{G}}_{q,m}$ are finite families of 
mutually 
non-isospectral graphs. Moreover: 
\begin{enumerate}[$(a)$]
\item $\Gamma_{q,m}(\ell)$ and $\bar\Gamma_{q,m}(\ell)$ are connected for every $\ell \ne \frac m2$ (primitiveness). \sk 

\item $\Gamma_{q,m}(\frac m2)$ is disconnected while $\bar\Gamma_{q,m}(\frac m2)$ is connected. \sk 

\item $\Gamma_{q,m}(\ell)$ and $\bar\Gamma_{q,m}(\ell)$ are non-bipartite
$($except for $\Gamma_{2,2}(1) = 2K_2$, $\bar \Gamma_{2,2}(1) = C_4 = K_{2,2})$.
\end{enumerate}
\end{prop}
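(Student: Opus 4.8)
The plan is to prove each of the three items by reading off the relevant facts from the explicit spectra computed in Theorem~\ref{Spec Gamma} and Proposition~\ref{Spec complement}, together with the structural criteria recalled in the introduction ($\Gamma$ $k$-regular connected $\Leftrightarrow$ $d_1=1$; $\Gamma$ bipartite $\Leftrightarrow$ $\lambda_n=-k$). First I would dispose of the ``finite, mutually non-isomorphic and non-isospectral'' claim: finiteness is clear since $\mathcal{G}_{q,m}$ (and hence $\bar{\mathcal{G}}_{q,m}$) is indexed by the divisors $\ell\mid m$ with $m_\ell$ even, a finite set; the graphs are pairwise non-isomorphic because, by Lemma~\ref{Sls}, $\Gamma_{q,m}(\ell)$ is $k_\ell=\frac{q^m-1}{q^\ell+1}$-regular and this value is strictly decreasing in $\ell$ (larger $\ell$ gives smaller degree), so distinct $\ell$ give distinct regularity degrees; consequently they are also non-isospectral, since isospectral regular graphs share the same degree $\lambda_1$. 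The same argument applies verbatim to the complements via $\bar k_\ell = q^\ell k_\ell$ from Corollary~\ref{prop eigen comps}, whose values are again distinct for distinct $\ell$ (one checks $q^\ell k_\ell$ is strictly increasing in $\ell$, or simply notes that $\bar\Gamma_{q,m}(\ell)\simeq\bar\Gamma_{q,m}(\ell')$ would force $\Gamma_{q,m}(\ell)\simeq\Gamma_{q,m}(\ell')$).

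For part~(a): by Theorem~\ref{Spec Gamma}(a), for $\ell\ne\frac m2$ the graph $\Gamma_{q,m}(\ell)$ has exactly three distinct eigenvalues $k_\ell>\upsilon_\ell>\mu_\ell$ with multiplicities $1, q^\ell k_\ell, k_\ell$; in particular the top eigenvalue has multiplicity $d_1=1$, so $\Gamma_{q,m}(\ell)$ is connected. For $\bar\Gamma_{q,m}(\ell)$, Proposition~\ref{Spec complement}(a) likewise gives three distinct eigenvalues with the top one of multiplicity $1$, hence connectedness. (One should double-check that $\bar\upsilon_\ell,\bar\mu_\ell$ are genuinely distinct and strictly below $\bar k_\ell$; this follows from $\bar\upsilon_\ell=-1-\upsilon_\ell$, $\bar\mu_\ell=-1-\mu_\ell$ and the strict inequalities $\upsilon_\ell>\mu_\ell$, together with $\bar k_\ell = q^\ell k_\ell > 0 > \bar\upsilon_\ell$ when $\ell\ge 1$.) This establishes primitiveness for $\ell\ne\frac m2$.

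For part~(b): by Lemma~\ref{completes}, $\Gamma_{q,m}(\frac m2)=q^{m/2}K_{q^{m/2}}$, which has $q^{m/2}\ge 2$ connected components, hence is disconnected; equivalently, by Theorem~\ref{Spec Gamma}(b) its top eigenvalue $k_{m/2}=q^{m/2}-1$ has multiplicity $q^{m/2}>1$. Dually $\bar\Gamma_{q,m}(\frac m2)=K_{q^{m/2}\times q^{m/2}}$ is complete multipartite, hence connected; or read off from Proposition~\ref{Spec complement}(b) that the top eigenvalue $\bar k_{m/2}=q^{m/2}(q^{m/2}-1)$ has multiplicity $1$. For part~(c), non-bipartiteness: a $k$-regular graph is bipartite iff $-k$ is an eigenvalue. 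For $\Gamma_{q,m}(\ell)$ I must check $\mu_\ell\ne -k_\ell$ and $\upsilon_\ell\ne -k_\ell$ (only the smallest eigenvalue $\lambda_n$ could equal $-k$, and by Note~\ref{note1} that smallest eigenvalue is negative, being $\mu_\ell$ or $\upsilon_\ell$ according to the parity of $\frac12 m_\ell$). Using \eqref{Autval}, $\mu_\ell=-k_\ell$ would read $-\varepsilon_\ell q^{m/2+\ell}-1 = -(q^m-1)$, i.e.\ $\varepsilon_\ell q^{m/2+\ell} = q^m-2$; since $\ell\le\frac m2$ the left side has absolute value $q^{m/2+\ell}\le q^m$, and equality $q^{m/2+\ell}=q^m-2$ is impossible for $q\ge 2$ unless $m/2+\ell=m$ with a defect of $2$, which never happens as powers of $q\ge2$ differ by at least $q\ge 2$ only at the bottom --- more carefully, $q^{m}-q^{m/2+\ell}\ge q^{m}-q^{m}=0$ with equality only if $\ell=m/2$, excluded; and if $\ell<m/2$ then $q^m-q^{m/2+\ell}\ge q^m - q^{m-1} = q^{m-1}(q-1)\ge 2 > 2$ fails only when $q^{m-1}(q-1)=2$, i.e.\ $q=2, m=2$, $\ell=1$ --- but that is the excluded case $\ell=\frac m2$. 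A similar elementary estimate handles $\upsilon_\ell$. The genuinely exceptional case is $\Gamma_{2,2}(1)=2K_2$ (here $m_\ell=2$ is even, $\ell=1=\frac m2$), which is bipartite, and its complement $\bar\Gamma_{2,2}(1)=C_4=K_{2,2}$, also bipartite. For $\bar\Gamma_{q,m}(\ell)$ with $\ell\ne\frac m2$, non-bipartiteness follows from the strongly regular structure (a connected SRG with $e\ge 1$, or more simply $g=3$, is non-bipartite) or directly: $-\bar k_\ell$ is not among $\bar\upsilon_\ell,\bar\mu_\ell$ by the same magnitude comparison. The main obstacle I anticipate is the case analysis in part~(c) --- namely verifying the small arithmetic inequalities that rule out $\lambda_n=-k$ in every case except $\Gamma_{2,2}(1)$, and correctly identifying $\Gamma_{2,2}(1)=2K_2$ as the unique bipartite member (it slips through because it is simultaneously the $\ell=\frac m2$ case and a disjoint union of edges); all other steps are immediate consequences of the already-computed spectra.
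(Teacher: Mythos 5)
Your proposal is correct and takes essentially the same route as the paper: finiteness and non-isomorphy/non-isospectrality from the distinct degrees read off the spectra, (a) connectedness from the multiplicity-one top eigenvalue in Theorem \ref{Spec Gamma} and Proposition \ref{Spec complement}, (b) from Lemma \ref{completes}, and (c) by ruling out $-k$ as an eigenvalue, with $\Gamma_{2,2}(1)=2K_2$ and its complement as the only exceptions (your arithmetic case-check in (c) is in fact more explicit than the paper's). One small caution: when $\varepsilon_\ell=-1$ (i.e.\ $\tfrac12 m_\ell$ odd) the ordering is $k_\ell>\mu_\ell>\upsilon_\ell$ and $\bar\upsilon_\ell>0$, so your parenthetical claims ``$k_\ell>\upsilon_\ell>\mu_\ell$'' and ``$\bar k_\ell>0>\bar\upsilon_\ell$'' should be stated according to the parity of $\tfrac12 m_\ell$; this does not affect the argument, since $k_\ell$ (resp.\ $\bar k_\ell$) is in every case strictly the largest eigenvalue and has multiplicity one.
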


\begin{proof}
By Theorem \ref{Spec Gamma} and Proposition \ref{Spec complement}, if $\ell,\ell'$ are different divisors of $m$ the graphs 
$\Gamma_{q,m}(\ell)$ and $\Gamma_{q,m}(\ell')$ are non-isospectral. The same holds for the pairs
$\bar \Gamma_{q,m}(\ell)$, $\bar \Gamma_{q,m}(\ell')$ and $\Gamma_{q,m}(\ell)$, $\bar \Gamma_{q,m}(\ell')$.

\noindent (\textit{a}) $\Gamma_{q,m}(\ell)$ (resp.\@ $\bar \Gamma_{q,m}(\ell)$) is connected if and only if $k_\ell$ 
(resp.\@ $\bar k_\ell$) has multiplicity 1; and this happens if and only if 
$\ell \ne \frac m2$, by Theorem \ref{Spec Gamma} (resp.\@ Proposition \ref{Spec complement}). Thus, $\Gamma_{q,m}(\ell)$ and 
$\bar \Gamma_{q,m}(\ell)$ are connected for $\ell \ne \frac m2$. 

\noindent (\textit{b}) 
This follows directly by Lemma \ref{completes}. 

\noindent (\textit{c}) The graphs $\Gamma_{q,m}(\ell)$ and $\bar \Gamma_{q,m}(\ell)$ are non-bipartite since $-k$ is not an eigenvalue, unless $q=m=2$ and $\ell=1$. In these cases we have 
$Spec(\Gamma_{2,2}(1)) = Spec(2K_2) = \{ [1]^{2}, [-1]^{2} \}$
(notice that $k=1$ and $\mu=1$ correspond to different eigenvectors) and 
$Spec(\bar\Gamma_{2,2}(1)) = Spec(2K_2) = \{ [2]^{1}, [0]^{2}, [-2]^{1}\}$.
\end{proof}

For a graph $\Gamma$, the number $w_r(\Gamma)$ of closed walks of length $r$ in $\Gamma$ 
can be expressed in terms of the spectrum. 
We now compute these numbers for the graphs $\Gamma_{q,m}(\ell)$ and their complements.
This can be used to give the number $c_n(\Gamma)$ of $n$-cycles of $\Gamma$, since we clearly have
$c_r(\Gamma)=\tfrac{1}{2r}w_r(\Gamma)$.

\begin{coro} \label{coro walks}
We have that 
\begin{align} \label{walks}
\begin{split}
w_r(\Gamma_{q,m}(\ell)) & = \begin{cases} 
k \, \{ k^{r-1} + q^\ell \upsilon^r + \mu^r \}, & \qquad \qquad \qquad \qquad \quad  \text{ if } \ell \ne \frac m2, \\[1.25mm]
q^m k \, \sum\limits_{j=1}^{r-1} \tbinom{r-1}{j} q^{\frac m2 (j-1)},  & \qquad \qquad \qquad \qquad \quad \text{ if } \ell = \frac m2,
\end{cases}
\\[1.35mm]
w_r(\bar \Gamma_{q,m}(\ell)) & = \begin{cases} 
q^\ell k \, \{ (q^\ell k)^{r-1} + (-1-\upsilon)^r + q^{\ell (r-1)} \upsilon^r \}, & \quad \:\:\: \text{ if } \ell \ne \frac m2,
 \\[1.25mm]
(q^{\frac m2})^{r} k \, \sum\limits_{j=1}^{r-1} \tbinom{r-1}{j} q^{\frac m2 (j-1)},  & \quad \:\:\: \text{ if } \ell = \frac m2,
\end{cases}
\end{split}
\end{align}
where $k=k_\ell$, $\upsilon=\upsilon_\ell$, $\mu=\mu_\ell$ and $\bar k = \bar k_\ell$, $\bar \upsilon = \bar \upsilon_\ell$, 
$\bar \mu = \bar \mu_\ell$ are the eigenvalues of $\Gamma_{q,m}(\ell)$ and $\bar \Gamma_{q,m}(\ell)$, respectively.
In particular, $k \mid w_r(\Gamma_{q,m}(\ell))$ and $\bar k \mid w_r(\bar \Gamma_{q,m}(\ell))$.
\end{coro}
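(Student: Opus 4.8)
The plan is to invoke the standard identity that, for any graph $\Gamma$ with adjacency matrix $A=A(\Gamma)$, the number of closed walks of length $r$ equals $w_r(\Gamma)=\Tr(A^r)=\sum_i m(\lambda_i)\,\lambda_i^{\,r}$, the sum running over the distinct eigenvalues of $\Gamma$ weighted by their multiplicities. Since Theorem~\ref{Spec Gamma} and Proposition~\ref{Spec complement} provide the complete spectra of $\Gamma_{q,m}(\ell)$ and $\bar\Gamma_{q,m}(\ell)$ in closed form, the whole proof reduces to substituting those eigenvalues and simplifying.

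First, suppose $\ell\ne\tfrac m2$. By Theorem~\ref{Spec Gamma}(a), $\Gamma_{q,m}(\ell)$ has eigenvalues $k_\ell,\upsilon_\ell,\mu_\ell$ with multiplicities $1,\,q^\ell k_\ell,\,k_\ell$, so
\[
w_r(\Gamma_{q,m}(\ell))=k_\ell^{\,r}+q^\ell k_\ell\,\upsilon_\ell^{\,r}+k_\ell\,\mu_\ell^{\,r}=k_\ell\bigl(k_\ell^{\,r-1}+q^\ell\upsilon_\ell^{\,r}+\mu_\ell^{\,r}\bigr),
\]
which is the first line of \eqref{walks}. For the complement, Proposition~\ref{Spec complement}(a) gives eigenvalues $\bar k_\ell,\bar\upsilon_\ell,\bar\mu_\ell$ with the same multiplicities $1,\,q^\ell k_\ell,\,k_\ell$; plugging these into $\Tr(A^r)$ and using $\bar k_\ell=q^\ell k_\ell$, $\bar\mu_\ell=q^\ell\upsilon_\ell$ and $\bar\upsilon_\ell=-1-\upsilon_\ell$ (Corollary~\ref{prop eigen comps}) to extract the common factor $q^\ell k_\ell$, the term $\bar k_\ell^{\,r}$ becomes $q^\ell k_\ell\,(q^\ell k_\ell)^{r-1}$, the term $k_\ell\bar\mu_\ell^{\,r}$ becomes $q^\ell k_\ell\,q^{\ell(r-1)}\upsilon_\ell^{\,r}$, and the middle term is $q^\ell k_\ell\,(-1-\upsilon_\ell)^{r}$, which is precisely the displayed formula for $w_r(\bar\Gamma_{q,m}(\ell))$.

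For $\ell=\tfrac m2$ one argues the same way, with an extra binomial expansion. By Theorem~\ref{Spec Gamma}(b) the eigenvalues of $\Gamma_{q,m}(\tfrac m2)$ are $k=q^{m/2}-1$ and $-1$ with multiplicities $q^{m/2}$ and $q^{m/2}(q^{m/2}-1)=q^{m/2}k$, so $w_r=q^{m/2}k^{\,r}+q^{m/2}k\,(-1)^r=q^{m/2}k\bigl(k^{\,r-1}+(-1)^r\bigr)$. Expanding $k^{\,r-1}=(q^{m/2}-1)^{r-1}$ by the binomial theorem, the constant ($j=0$) summand equals $(-1)^{r-1}$ and cancels the term $(-1)^r$, leaving a sum over $1\le j\le r-1$; rewriting $q^{m/2}\cdot q^{\frac m2 j}=q^{m}\cdot q^{\frac m2(j-1)}$ gives the stated expression. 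For $\bar\Gamma_{q,m}(\tfrac m2)$ the argument is identical: by Proposition~\ref{Spec complement}(b) the spectrum is $\{\bar k,\,0,\,-q^{m/2}\}$, the zero eigenvalue contributes nothing, and applying the same binomial collapse to $\bar k^{\,r}=(q^{m/2}k)^r$ and $(q^{m/2}-1)(-q^{m/2})^r$ yields the second line for $w_r(\bar\Gamma_{q,m}(\ell))$.

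The divisibility claims are then read off the closed forms: for $\ell\ne\tfrac m2$, $k_\ell$ (resp.\ $\bar k_\ell=q^\ell k_\ell$) appears as an overall factor; for $\ell=\tfrac m2$, $k$ is an overall factor of $w_r(\Gamma_{q,m}(\tfrac m2))$, while $\bar k=q^{m/2}k$ divides $w_r(\bar\Gamma_{q,m}(\tfrac m2))=(q^{m/2})^r k\,(\cdots)$ because $q^{m/2}\mid(q^{m/2})^r$ for $r\ge 1$. I expect the only delicate point to be the bookkeeping in the $\ell=\tfrac m2$ case --- tracking the binomial coefficients $\binom{r-1}{j}$ and checking that the lone $(-1)^r$ is absorbed by the $j=0$ term so that the index truly runs from $1$ to $r-1$; everything else is routine substitution into $\Tr(A^r)=\sum_i m(\lambda_i)\lambda_i^{\,r}$ and the elementary eigenvalue identities recorded in Corollaries~\ref{prop eigens} and~\ref{prop eigen comps}.
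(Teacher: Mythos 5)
Your method is exactly the paper's: write $w_r$ as the trace of $A^r$, i.e.\ the power sums $\sum_i m(\lambda_i)\lambda_i^{\,r}$, and substitute the spectra of Theorem \ref{Spec Gamma} and Proposition \ref{Spec complement}. For $\ell\ne\frac m2$ your computation (including the use of $\bar k_\ell=q^\ell k_\ell$, $\bar\mu_\ell=q^\ell\upsilon_\ell$, $\bar\upsilon_\ell=-1-\upsilon_\ell$) reproduces the displayed formulas and the divisibility claims, just as in the paper's proof.

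However, the step you yourself single out as the only delicate point is precisely where the argument, as written, does not close. For $\ell=\frac m2$ the trace computation gives $w_r(\Gamma_{q,m}(\tfrac m2))=q^{\frac m2}k\{k^{r-1}+(-1)^r\}$ with $k=q^{\frac m2}-1$ (this is the paper's own intermediate formula), and in the binomial expansion of $k^{r-1}$ the $j=0$ term does cancel $(-1)^r$; but what remains is $\sum_{j=1}^{r-1}\binom{r-1}{j}(-1)^{r-1-j}q^{\frac m2 j}$, which carries alternating signs that the displayed formula in the corollary does not have. Your claim that this ``gives the stated expression'' is therefore not literally true: for $r=3$ the true count is $q^mk(q^{\frac m2}-2)$, while the printed line gives $q^mk(q^{\frac m2}+2)$. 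Similarly, for the complement your (correct) spectral data yield $w_r(\bar\Gamma_{q,m}(\tfrac m2))=(q^{\frac m2})^{r}k\{k^{r-1}+(-1)^r\}$, which differs from the printed line not only by the same missing signs but also by a factor $q^{\frac m2}$ (already at $r=2$ the display returns $q^mk$, whereas $w_2=q^m\bar k=q^{\frac{3m}{2}}k$). So your intermediate formulas are right and coincide with what the paper's proof actually derives; the defect is in the corollary's displayed $\ell=\frac m2$ formulas (missing factors $(-1)^{r-1-j}$, and the extra $q^{\frac m2}$ in the complement case), and a careful verification at the step you flagged should have exposed this discrepancy rather than asserted agreement.
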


\begin{proof}
It is well known that the number of closed walks of length $r$ is given by the $r$-th moment sums 
	$$w_r(\Gamma) = \lambda_1^r + \cdots + \lambda_n^r.$$ 
Thus, by Theorem \ref{Spec Gamma}, we get
$w_r(\Gamma_{q,m}(\ell)) = k_\ell^r+q^\ell k_\ell \upsilon_\ell^r + k_\ell \mu_\ell^r $
for $\ell \ne \tfrac m2$ and also
$$w_r(\Gamma_{q,m}(\tfrac m2)) = q^{\frac m2} (q^{\frac m2}-1)^r + q^{\frac m2} (q^{\frac m2}-1) (-1)^r = 
k_{\frac m2} q^{\frac m2} \{ (q^{\frac m2}-1)^{r-1} +(-1)^r \}$$
from which the first equality in \eqref{walks} follows. For the complementary graphs we proceed similarly using 
Proposition~\ref{Spec complement} and we obtain the second identity in \eqref{walks}.
The remaining assertions are clear.
\end{proof}

\begin{exam} \label{triangles}
Let us compute the girths of the graph $\Gamma=\Gamma_{2,4}(1)$ and of its complement $\bar \Gamma$. 
These graphs have eigenvalues $\{5, 1, -3 \}$ and $\{10, 2, -2 \}$, respectively. By the previous 
corollary the number of $r$-cycles in $\Gamma$ is given by
$$c_r (\Gamma) = \tfrac{1}{2r} w_r(\Gamma) = \tfrac{1}{2r} k(k^{r-1}+2 \upsilon^r+\mu^r) = \tfrac{5}{2r} (5^{r-1} +2+(-3)^r).$$
Thus $c_3 (\Gamma) = \tfrac 56 (5^2 +2+(-3)^3)=0$ and $c_4 (\Gamma) = \tfrac 58 (5^3 +2+(-3)^4)=30$ and hence 
the girth of $\Gamma$ is $g(\Gamma) = 4$. Also, 
$c_3 (\bar \Gamma) = \tfrac 16 w_3(\bar \Gamma) = 
\tfrac 53 (10^2 +2^3 +4(-2)^3)=100$, so $g(\bar \Gamma)=3$. \hfill $\lozenge$
\end{exam}

\section{Strongly regular graphs, Latin squares, and invariants} \label{sec:5}
A \textit{strongly regular graph} (SRG) with parameters $v,k,e,d$ (often called $n,k,\lambda, \mu$), denoted $srg(v,k,e,d)$, is a $k$-regular graph with $v$ vertices such that for any pair of vertices $x,y$ the number of vertices adjacent (resp.\@ non-adjacent) to both $x$ and $y$ is $e\ge 0$ 
(resp.\@ $d\ge 0$). These parameters are tied by the relation 
\begin{equation} \label{main relation} 
(v-k-1)d = k(k-e-1). 
\end{equation} 
Strongly regular graphs are, in particular, distance regular graphs and have diameter $\delta=2$ if $d\ne 0$.
For instance, the complete graphs $K_n$ are $srg(n,n-1,n-2,0)$ for $n\ge 3$. Also, Paley graphs 
$P(q^m)$ with $q^m\equiv 1 \pmod 4$ are strongly regular with parameters 
\begin{equation} \label{srg Pq} 
\big(q^m, \, \tfrac 12(q^m-1), \, \tfrac 14 (q^m-5), \, \tfrac 14 (q^m-1) \big),
\end{equation} 
so they belong to the half case, i.e.\@ those with parameters 
$(4t+1,2t,t-1,t)$ for some $t$.

\subsubsection*{Spectrum}
The spectrum of strongly regular graphs is well understood. 
Every strongly regular graph $\Gamma=srg(v,k,e,d)$ has 3 eigenvalues given by 
\begin{equation} \label{srg auts}
\lambda_1=k \qquad \quad \text{and} \quad \qquad \lambda_2^{\pm} = \tfrac 12 \{(e-d) \pm \Delta \}
\end{equation}
where $\Delta =  \sqrt{(e-d)^2+4(k-d)}$,
with corresponding multiplicities 
$$m_1=1 \qquad \text{and} \qquad m_2^{\pm} = \tfrac 12 \{(v-1) \mp \tfrac{2k+(v-1)(e-d)}{\Delta}\}.$$
If $\Gamma$ is connected the converse also holds (see for instance \cite{BH}); that is, if $\Gamma$ is connected and has 
3 eigenvalues, then it is a strongly regular graph.

\smallskip
$\Gamma$ is called a \textit{conference} graph if
\begin{equation} \label{conf} 
2k+(v-1)(e-d)=0
\end{equation} 
(for instance Paley graphs are conference graphs). On the other hand, if $2k+(v-1)(e-d) \ne 0$ then  
$\Gamma=srg(v,k,e,d)$ has integral spectrum with unequal multiplicities. 

The condition for 
$\Gamma=srg(v,k,e,d)$ to be Ramanujan (see \eqref{rama}) is 
\begin{equation} \label{srg ram}
(e-d) + \sqrt{(e-d)^2+4(k-d)} \le 4\sqrt{k-1}
\end{equation}
Note that if $e=d$ then $\lambda_2^- = -\lambda_2^+$ (with multiplicity $v-1$) and \eqref{srg ram} is then equivalent to $3k+d\ge 4$ which always holds, except for the trivial case $k=1$ and $d=0$.  

Therefore, SRGs 
with $e=d$ are trivially integral, non-bipartite and Ramanujan graphs. 
The smallest examples of such graphs are: the triangular graph $T(6)$, the Shrikhande graph and the complement of the Clebsch graph with parameters respectively given by 
$$(15,8,4,4), \qquad (16,6,2,2), \qquad \text{and} \qquad (16,10,6,6).$$

\begin{rem}
If $f:\ff_2^n \rightarrow \ff_2$ is a Boolean function, the Cayley graph $\Gamma_f = X(\ff_2^n, \Omega_f)$ with $\Omega_f = \{x\in \ff_2^n : f(x)=1 \}$ is considered in \cite{St}. 
It is proved that $\Gamma_f$ is an SRG 
with the property $e=d$ if and only if $f$ is bent.
Thus, 
	$\{\Gamma_f \,|\, f:\ff_2^n \rightarrow \ff_2 \text{ bent}\}_{n\in \N}$ 
is an infinite family of integral non-bipartite strongly regular Ramanujan graphs over $\ff_2$ with $e=d$. In Section 8 we will construct infinite families of integral non-bipartite strongly regular Ramanujan graphs having $e \ne d$ over any field.
\end{rem}

\subsection{Parameters as strongly regular}
We now show that
$\Gamma_{q,m}(\ell)$ and $\bar \Gamma_{q,m}(\ell)$ are SRGs with $e\ne d$. The complement of 
$\Gamma = srg(v, k, e, d)$ is also an SRG,
$\bar \Gamma = srg(v, \bar k, \bar  e, \bar d)$, with parameters 
\begin{equation} \label{srg comp} 
 \bar k = v-k-1, \quad \qquad \bar  e = v-2-2k+d, \quad \qquad \bar d = v-2k+e.
\end{equation} 
Notice that any Paley graph is self-complementary. 
We now give the parameters as SRGs for the graphs in $\mathcal{G}_{q,m}$ and $\bar{\mathcal{G}}_{q,m}$.
In particular, no graph $\mathcal{G}_{q,m}$ in $\mathcal{G}_{q,m}$ is self-complementary.

\goodbreak 

\begin{thm} \label{prop srg}
All the graphs $\Gamma_{q,m}(\ell) \in \mathcal{G}_{q,m}$ and $\bar \Gamma_{q,m}(\ell) \in \bar{\mathcal{G}}_{q,m}$ 
are strongly regular, not of conference type. 

\begin{enumerate}[$(a)$] 
\item If $\ell \ne \frac m2$, then $\Gamma_{q,m}(\ell) = srg(q^m, k, e, d)$ and 
$\bar \Gamma_{q,m}(\ell) = srg(q^m, q^\ell k, \bar e, \bar d)$
with $k=\frac{q^m-1}{q^{\ell}+1}$, 
\begin{align} \label{eded}
\begin{split}
e &= \tfrac{q^m - \varepsilon_\ell \, q^{\frac{m}{2} + \ell}(q^{\ell}-1) - 3q^{\ell} - 2}{(q^{\ell}+1)^{2}}, 
\qquad \:\:\:\, d = \tfrac{q^m + \varepsilon_\ell \, q^{\frac{m}{2}}(q^{\ell}-1)-q^{\ell}}{(q^{\ell}+1)^{2}}, \\[1.25mm]
\bar e &= \tfrac{q^{2\ell}(q^m -2) + \varepsilon_\ell \, q^{\frac m2} (q^{\ell}-1) - 3q^{\ell}}{(q^{\ell}+1)^{2}}, 
\qquad \bar d = \tfrac{q^\ell (q^{m+\ell}-1) - \varepsilon_\ell \, q^{\frac m2 + \ell} (q^{\ell}-1)}{(q^{\ell}+1)^{2}},
\end{split}
\end{align}
where $\varepsilon_\ell = (-1)^{\frac 12 m_\ell}$. 

\item If $\ell = \frac m2$, then $\Gamma_{q,m}(\frac m2) = srg(q^m, k, k-1, 0)$ and 
$\bar \Gamma_{q,m}(\frac m2) = srg(q^m, q^{\frac m2} k, q^{\frac m2} (k-1), q^{\frac m2} k)$ with 
$k= q^{\frac m2}-1$ $($except for $\Gamma_{2,2}(1)=2K_2)$.
\end{enumerate}
In particular, $\mathcal{G}_{q,m} \cap \bar{\mathcal{G}}_{q,m} = \varnothing$. 
\end{thm}

\begin{proof}
If $\ell \ne \tfrac m2$, then $\Gamma_{q,m}(\ell)$ is connected by Proposition \ref{properties} and hence, since it has 3 eigenvalues, it is a strongly regular graph. If $\ell = \tfrac m2$, by Proposition \ref{properties} the graph $\Gamma_{q,m}(\ell)$ is a disjoint union of complete graphs, and hence strongly regular also. The complement of an SRG is an SRG.
It is straightforward to check that \eqref{conf} holds if and only if $\ell=0$, but $\ell=0$ is not allowed for the GP-graphs in the families $\mathcal{G}_{q,m}$ and $\bar{\mathcal{G}}_{q,m}$ (see \eqref{family} and \eqref{comp fam}).

\noindent (\textit{a}) 
We already know that $\Gamma_{q,m}(\ell)$ is a $srg(v,k,e,d)$ with $v = q^m$ and $k = \frac{q^m-1}{q^{\ell}+1}$. 
To compute $e$ and $d$ note that by \eqref{srg auts} we have 
$$e= k + \lambda_2^+ \lambda_2^- + \lambda_2^+ + \lambda_2^- \qquad \text{ and } \qquad d=k+\lambda_2^+\lambda_2^-,$$ 
where $\lambda_2^{\pm}$ are the non-trivial eigenvalues $\upsilon_\ell$ and $\mu_\ell$ in Theorem \ref{Spec Gamma}.
Solving for $e$ and $d$ in the above expressions we get the parameters for $\Gamma_{q,m}(\ell)$. The parameters of the complementary graph follow from \eqref{srg comp}. 

\noindent (\textit{b}) 
It is known that $aK_n = srg(an, n-1,n-2,0)$ and $K_{a \times n} = srg(an, (a-1)n, (a-2)n, (a-1)n)$.
The result follows by taking $a=n=q^{\frac m2}$, except for $\Gamma_{2,2}(1)$ since the parameters $(4,1,0,0)$ has no sense.

The remaining assertion is clear after some straightforward computations.
\end{proof}

\begin{rem} \label{inf fam}
(\textit{i}) The above  proposition gives two infinite families of strongly regular graphs with explicit parameters. 
In particular, $e, d >0$, $\bar e, \bar d>0$ and $e \ne d$, $\bar e \ne \bar d$ in all cases with the only exceptions of
the Clebsch graph $\Gamma_{2,4}(1)=srg(16,5,0,2)$ and its complement $\bar \Gamma_{2,4}(1) = srg(16,10,6,6)$, respectively. 
In particular, this proves that the Clebsch graph is the \textsl{only} graph in our families without triangles. 

\noindent 
(\textit{ii}) By taking $\ell=0$ in \eqref{eded} we recover the parameters in \eqref{srg Pq} for Paley graphs.

\noindent
(\textit{iii}) In \cite{MTR}, primitive strongly regular graphs are classified into 3 classes.
If $\Gamma = srg(n,k,e,d)$ and $\Gamma$ is not the Clebsch graph or its complement 
(i.e.\@ $\Gamma \ne \Gamma_{2,2}(1), \bar \Gamma_{2,2}(1)$) then either
$$ (\text{\textit{a}}) \quad n\le \min \big\{ \tfrac{f(f+1)}{2}, \tfrac{g(g+1)}{2} \big\} \qquad \text{ or } \qquad 
(\text{\textit{b}}) \quad n = \min \big \{ \tfrac{f(f+3)}2, \tfrac{g(g+3)}2 \big \},$$ 
where $f,g$ are the multiplicities of the non-trivial eigenvalues. 
It is easy to check that all the graphs $\Gamma_{q,m}(\ell)$ and $\bar \Gamma_{q,m}(\ell)$ --with $(q,m,\ell) \ne (2,2,1)$-- 
satisfy condition ($a$) strictly.
\end{rem}

A connected strongly regular graph $\Gamma$, being a distance regular graph of diameter $\delta =2$, has an intersection array of the form $\mathcal{A}(\Gamma) = \{b_0,b_1, b_2; c_0, c_1, c_2\}$. For every $i=0,1,2$ and every pair of vertices $x,y$ at distance $i$, the intersection numbers are defined by 
$$b_i = \#\{z\in N(y): d(x,z)=i+1 \} \qquad \text{and} \qquad c_i = \#\{ z\in N(y) : d(x,z)=i-1 \},$$ 
where $N(y)$ denotes the set of neighbors of $y$. Since we trivially have $b_2=c_0=0$, we will simply write $\mathcal{A}(\Gamma) = \{b_0,b_1;  c_1, c_2\}$, as it is usual.
\black 

\begin{coro} \label{coro array}
If $\ell \ne \frac m2$, the intersection arrays of $\Gamma=\Gamma_{q,m}(\ell)$ and $\bar \Gamma=\bar \Gamma_{q,m}(\ell)$ are given by
\begin{equation} \label{arrays} 
\begin{split}
& \mathcal{A}(\Gamma) = \{ k, k-e-1; 1, d \} = \{k, q^\ell d; 1, d\}, \\[1mm]
& \mathcal{A}(\bar \Gamma) = \{ v-k-1, k-d; 1, v-2k+e \} = \{ q^\ell k, k-d; 1, q^\ell(e+d)\},
\end{split}
\end{equation}
where $k$, $d$ and $e$ are given in Theorem \ref{prop srg}.
\end{coro}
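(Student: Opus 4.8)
The plan is to derive the intersection arrays directly from the strongly regular parameters already computed in Proposition~\ref{prop srg}, using the standard dictionary between an $srg(v,k,e,d)$ and its intersection array as a distance-regular graph of diameter $2$. First I would recall that for a connected strongly regular graph the intersection array is $\{b_0,b_1;c_1,c_2\}$ with $b_0=k$ (the valency), $c_1=1$ (there is exactly one vertex at distance $0$ from $x$ among the neighbours of an adjacent $y$, namely $x$ itself), $c_2=d$ (the number of common neighbours of two non-adjacent vertices), and $b_1=k-e-1$ (from a neighbour $y$ of $x$, the neighbours of $y$ split as: $x$ itself, the $e$ common neighbours of $x$ and $y$, and the remaining $k-e-1$ vertices at distance $2$ from $x$). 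This gives the first description $\mathcal{A}(\Gamma)=\{k,k-e-1;1,d\}$ and, applied to $\bar\Gamma$ with its parameters $\bar k=v-k-1$, $\bar e$, $\bar d$ from \eqref{srg comp}, the description $\mathcal{A}(\bar\Gamma)=\{v-k-1,k-d;1,v-2k+e\}$ after substituting $\bar e=v-2-2k+d$ and $\bar d=v-2k+e$ and simplifying $\bar k-\bar e-1=(v-k-1)-(v-2-2k+d)-1=k-d$.

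Next I would convert these generic expressions into the explicit closed forms on the right-hand side of \eqref{arrays}, using the main relation \eqref{main relation} $(v-k-1)d=k(k-e-1)$ together with $v=q^m$ and $\bar k=q^\ell k$ from Proposition~\ref{prop srg}. Since $v-k-1=\bar k_\ell=q^\ell k_\ell$ by Corollary~\ref{prop eigen comps}, the main relation gives $q^\ell k\,d=k(k-e-1)$, hence $k-e-1=q^\ell d$, which is exactly the entry $b_1$ in $\mathcal{A}(\Gamma)$. For $\mathcal{A}(\bar\Gamma)$, the entry $c_2=v-2k+e$ should be rewritten as $q^\ell(e+d)$: here I would use $v-k-1=q^\ell k$ to write $v-2k+e=(v-k-1)-k+1+e=q^\ell k-(k-e-1)=q^\ell k-q^\ell d=q^\ell(k-d)$... wait, that needs a second check; more safely, apply the main relation to $\bar\Gamma$, namely $(v-\bar k-1)\bar d=\bar k(\bar k-\bar e-1)$, i.e. $k\,\bar d=q^\ell k\,(k-d)$, so $\bar d=q^\ell(k-d)$, and independently $\bar d=v-2k+e$ by \eqref{srg comp}; combined with $v-k-1=q^\ell k$ one gets $v-2k+e=q^\ell k-(k-1-e)$, and since $k-e-1=q^\ell d$ this equals $q^\ell k-q^\ell d=q^\ell(k-d)$. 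I would then reconcile this with the claimed $q^\ell(e+d)$ by noting the identity $k-d=e+d$ is \emph{not} generally true, so the correct simplification to record is whichever the algebra yields — I expect $\{q^\ell k,\,k-d;\,1,\,q^\ell(k-d)\}$, and I would double-check against Proposition~\ref{prop srg} that $\bar e=\bar k-1-q^\ell(k-d)$ holds, adjusting the statement's right-hand side accordingly.

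The routine-but-careful part — and the only real obstacle — is this last algebraic bookkeeping: making sure the four quantities $k-e-1$, $d$, $v-2k+e$, $k-d$ are expressed consistently in terms of $k=k_\ell$, $d=d_\ell$, $q^\ell$, using \eqref{main relation}, \eqref{srg comp}, and the relation $v-k-1=q^\ell k$ from Corollary~\ref{prop eigen comps}, rather than re-expanding the fractions in \eqref{eded}. Everything reduces to the single substitution $k-e-1=q^\ell d$ (equivalently $e=k-1-q^\ell d$), which follows from the main relation in one line; the rest is pure substitution. So the proof is short: state the generic array for any connected SRG and its complement, then plug in $v=q^m$, $\bar k=q^\ell k$, and $e=k-1-q^\ell d$, and simplify.
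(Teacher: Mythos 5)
Your proposal is correct and follows essentially the same route as the paper: write down the generic array $\{k,\,k-e-1;\,1,\,d\}$ for a connected strongly regular graph of diameter $2$, apply it to the complement via \eqref{srg comp}, and then simplify using \eqref{main relation} together with $v-k-1=q^{\ell}k$. Your hesitation about the last entry is in fact justified and you resolved it correctly: since $k-e-1=q^{\ell}d$, one gets $v-2k+e=q^{\ell}k-(k-e-1)=q^{\ell}(k-d)$, and since $(k-d)-(e+d)=(k-e-1)-2d+1=(q^{\ell}-2)d+1>0$, the closed form $q^{\ell}(e+d)$ printed in the corollary can never equal $v-2k+e$; the correct expression is $q^{\ell}(k-d)$, as the paper's own example bears out ($\bar\Gamma_{2,12}(1)$ has $v-2k+e=4096-2730+440=1806=2\cdot 903=q^{\ell}(k-d)$, whereas $q^{\ell}(e+d)=1804$). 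So your adjustment of the statement's right-hand side is the right call, and the rest of your argument coincides with the paper's proof.
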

\begin{proof}
We know that $\Gamma = srg(v,k,e,d)$ is primitive for $\ell \ne \frac m2$. 
Since $\Gamma = srg(v,k,e,d)$ is connected with $\delta=2$, its intersection array is $\{ k, k-e-1; 1, d \}$. 
In fact, it is clear that $b_0=k$ and $c_1=1$. Let $x,y$ be vertices of $\Gamma$. 
Thus, if $d(x,y)=1$, then 
$$b_1 = \#(N(y) \smallsetminus\{x\}) - \#N(x) =k-1-e.$$ 
If $d(x,y)=2$ then 
$c_2 = \#(N(x) \cap N(y))$. 
Since $\bar \Gamma$ is also connected with diameter 2, its intersection array is $\{ \bar k, \bar k - \bar e-1; 1, \bar d \}$.
Now, since $v-2k+e = (v-k+1)-(k-e+1)$, by using \eqref{main relation} and \eqref{srg comp} we get the desired result.
\end{proof}

\subsubsection*{Latin square type graphs}
For the definitions and results in this subsection we refer to \cite{CvL}.
A strongly regular graph $\Gamma$ is \textit{geometric} if it is the point graph of a partial geometry $pg(\sigma,t,\alpha)$. 
In this case, $\Gamma$ has parameters (see Proposition 7.3 in \cite{CvL})
\begin{equation} \label{pg}
srg( \tfrac{1}{\alpha} (\sigma+1)(\sigma t+\alpha), (t+1)\sigma, \sigma-1+t(\alpha-1), (t+1)\alpha).
\end{equation}

A graph is \textit{pseudo-geometric} if it has the parameters in \eqref{pg}. 
Strongly regular graphs with parameters 
\begin{equation} \label{PLS}
\begin{aligned}
PL_{t}(u) = srg(u^2, t(u-1), t^2-3t+u, t(t-1)), \\[1.25mm]
NL_{t}(u) = srg\big( u^2, t(u+1), t^2+3t-u, t(t+1) \big),
\end{aligned}
\end{equation}
for some $t,u>0$, are respectively called \textit{pseudo Latin square graph} and \textit{negative Latin square graph} (notice that $NL_{t}(u)=PL_{-t}(-u)$). If a pseudo Latin square graph $PL_t(u)$ is geometric then it is called \textit{Latin square graph}
and it is denoted $L_t(u)$. In particular, if $t=2$, $L_2(u)$ is called a \textit{lattice} graph.

We now show that the connected graphs $\Gamma_{q,m}(\ell)$ in $\mathcal{G}_{q,m}$ are pseudo-geometric. 
\begin{prop} \label{teo latin}
Any $\Gamma_{q,m}(\ell)$ in $\mathcal{G}_{q,m}$ with $\ell \ne \frac m2$ is pseudo-geometric. More precisely: 
\begin{enumerate}[$(a)$]
\item If $\tfrac 12 m_\ell$ is odd then $\Gamma = PL_{t}(q^{\frac m2})$. \msk  
	
\item If $\tfrac 12 m_\ell$ is even then $\Gamma = NL_{t}(q^{\frac m2})$. 
\end{enumerate}	
Here $t=\tfrac{q^{\frac m2}-\varepsilon_{\ell}}{q^\ell+1}=|\upsilon_\ell|$ and $\varepsilon_{\ell}=(-1)^{\frac 12 m_{\ell}}$, where $\upsilon_\ell$ is one of the eigenvalues of $\Gamma$ in \eqref{Autval}. 
\end{prop}

\begin{proof}
Let $\Gamma = \Gamma_{q,m}(\ell)$ with $0 < \ell < \frac m2$ and $\ell \mid m$. Note that the regularity degree of $\Gamma$ equals the multiplicity of a non-trivial eigenvalue, namely $k=m(\mu_\ell)$, by ($a$) of Theorem \ref{Spec Gamma}. 
Thus, by Proposition 8.14 in \cite{CvL}, $\Gamma$ is either a pseudo Latin square graph, a negative Latin square graph or a conference graph. 
Since $\Gamma$ is not a conference graph, by ($a$) in Theorem~\ref{prop srg}, 
then $\Gamma$ is a pseudo Latin square graph or a negative Latin square graph, hence with parameters as in \eqref{PLS}.

Recall that $\Gamma=srg(n,k,e,d)$ with $n=q^m$, $k=\tfrac{q^m-1}{q^\ell+1}$ and $e$, $d$ as in \eqref{eded}. 
Take $u=|\mu_\ell - \upsilon_\ell|$ and $t=|\upsilon_\ell|$. 
One can check that $u=q^{\frac m2}$ for $\tfrac 12 m_\ell$ even or odd, and hence $n=u^2$.
It is clear that $k=t(u-1)$ holds (resp.\@ $k=t(u+1)$) if and only if $\varepsilon_\ell=-1$ (resp.\@ $\varepsilon_\ell=1$), 
that is if and only if $\tfrac 12 m_\ell$ is odd (resp.\@ even). It is straightforward to check that $t^2-3t+u=e$ and $t(t-1)=d$
(resp.\@ $t^2+3t-u=e$ and $t(t+1)=d$) for $\tfrac 12 m_\ell$ is odd (resp.\@ even).
Hence, $\Gamma=PL_t(u)$ if $\tfrac 12 m_\ell$ is odd and $\Gamma=NL_t(u)$ if $\tfrac 12 m_\ell$ is even, proving 
($a$) and ($b$).
\end{proof}

\subsection{Invariants} \label{sec:6a}
For completeness, given a graph $\Gamma$ in our families, we now summarize the exact values, or bounds, for some classical invariants (such as the diameter $\delta(\Gamma)$ and the girth $\gamma(\Gamma)$, the independence number $\alpha(\Gamma)$, the clique number $\omega(\Gamma)$, the chromatic number $\chi(\Gamma)$, the vertex, edge and algebraic connectivities $\kappa(\Gamma)$, $\varepsilon(\Gamma)$, and $\theta_2(\Gamma)$ respectively, and the isoperimetric constant $h(\Gamma)$), for $\Gamma$ and $\bar \Gamma$ in terms of the spectrum of $\Gamma$.

\begin{prop} \label{prop invariants}
Let $\Gamma = \Gamma_{q,m}(\ell) \in \mathcal{G}_{q,m}$ and $\bar \Gamma = \bar \Gamma_{q,m}(\ell) \in \bar{\mathcal{G}}_{q,m}$. 
Then, we have:
\begin{enumerate}[$(a)$] \setlength\itemsep{1.2mm}
\item The diameters of $\Gamma$ and $\bar \Gamma$ are $\delta(\Gamma) = \delta(\bar \Gamma) = 2$. \sk 

\item The girths of $\Gamma$ and $\bar \Gamma$ are $\gamma(\Gamma) = \gamma(\bar \Gamma) = 3$, except for $\gamma(\Gamma_{2,4}(1)) = 4$.
\sk 

\item The independence and clique numbers of $\Gamma$ and $\bar \Gamma$ satisfy: \sk

\begin{enumerate}[$(i)$]
	\item If  \small{$\frac 12$}$m_{\ell}$ is even then	
		$$ \omega( \Gamma) = \alpha(\bar\Gamma) \le \frac{q^m+q^{\frac{m}{2}+\ell}}{q^{\frac{m}{2}+\ell}+1} \qquad \text{and} \qquad 
		\omega(\bar \Gamma) = \alpha(\Gamma) \le \frac{q^m+q^{\frac{m}{2}-\ell}}{q^{\frac{m}{2}-\ell}+1}.$$ 
		
\item If  \small{$\frac 12$}$m_{\ell}$ is odd then 
$\omega(\Gamma)=\alpha(\bar\Gamma)=\omega(\bar \Gamma) =\alpha(\Gamma)=q^{\frac m2}$. 
	\end{enumerate}
\sk

\item The chromatic numbers of $\Gamma$ and $\bar \Gamma$ satisfy: \sk

\begin{enumerate}[$(i)$]
\item If  \small{$\frac 12$}$m_{\ell}$ is even then
$$\chi(\Gamma)\ge \frac{(q^m-1)(q^{\frac{m}{2}-\ell}+1)}{q^m+q^{\frac{m}{2}-\ell}} \qquad \text{and}\qquad 
\chi(\bar \Gamma)\ge\frac{(q^m-1)(q^{\frac{m}{2}+\ell}+1)}{q^m+q^{\frac{m}{2}+\ell}}.$$
		
\item	If  \small{$\frac 12$}$m_{\ell}$ is odd then 
$\chi(\Gamma)=\chi(\bar \Gamma)= q^{\frac{m}{2}}$.
\end{enumerate}
\sk

\item The vertex connectivity and edge connectivity of $\Gamma$ and $\bar \Gamma$ are given by
$\kappa(\Gamma) = \varepsilon(\Gamma) = \tfrac{q^m-1}{q^\ell+1}$ and 
$\kappa(\bar \Gamma) = \varepsilon(\bar \Gamma) = \tfrac{q^\ell(q^m-1)}{q^\ell+1}$.
In particular, $\kappa(\Gamma) =q^\ell \kappa(\bar \Gamma)$ and $\varepsilon(\Gamma) =q^\ell \varepsilon(\bar \Gamma)$.  

\sk

\item The algebraic connectivity of $\Gamma$ and $\bar \Gamma$ are given by
$$ \theta_2(\Gamma) = \begin{cases} 
\upsilon_\ell & \:\: \text{if \small{$\frac 12$}$m_\ell$ is even}, \\
\mu_\ell      & \:\: \text{if \small{$\frac 12$}$m_\ell$ is odd},
\end{cases}
\qquad \text{and} \qquad  
\theta_2(\bar \Gamma) = \begin{cases} 
\bar \mu_\ell        & \:\: \text{if \small{$\frac 12$}$m_\ell$ is even}, \\
\bar \upsilon_\ell   & \:\: \text{if \small{$\frac 12$}$m_\ell$ is odd}.
\end{cases}$$

\sk 

\item The isoperimetric constants of $\Gamma$ and $\bar \Gamma$ satisfy: \sk 

\begin{enumerate}[$(i)$]
	\item If  \small{$\frac 12$}$m_{\ell}$ is odd, then 
	$$ \tfrac 12 (k_{\ell} - \upsilon_{\ell}) \le h(\Gamma) \le \sqrt{k_{\ell}^2-\upsilon_{\ell}^2} 
	\qquad \text{and} \qquad  \tfrac 12 (q^{\ell} k_{\ell} - \bar\mu_{\ell}) \le h(\bar \Gamma) \le
	\sqrt{q^{2\ell} k_{\ell}^2 - \bar{\mu}_{\ell}^2} .$$					
	
	\item If  \small{$\frac 12$}$m_{\ell}$ is even, then 
	$$\tfrac 12 (k_\ell - \mu_\ell) \le h(\Gamma) \le \sqrt{k_\ell^2 - \mu_\ell^2} 
	\qquad \text{and} \qquad \tfrac 12 (q^{\ell}k_{\ell} - \bar\upsilon_{\ell}) \le h(\bar \Gamma) \le 
	\sqrt{q^{2\ell} k_{\ell}^2 - \bar\upsilon_{\ell}^2} .$$
\end{enumerate}
\end{enumerate}
In items $(a)$, $(e)$ and $(f)$ one has to take $\ell\neq\tfrac m2$.
\end{prop}

\noindent \textit{Note.} In the proposition, the case $\ell=\frac m2$ is contained in the case $\frac 12 m_\ell$ odd.

\begin{proof}
Since $\Gamma_{q,m}(\tfrac m2)$ is disconnected, we exclude the cases $\ell=\tfrac m2$ in items ($a$), $(e)$ and $(f)$.
  	
To prove (\textit{a}) and ($b$) note that, by Theorem \ref{prop srg}, every $\Gamma$ considered is an $srg(v,k,e,d)$ with $e>0$ (except for $\Gamma=\Gamma_{2,4}(1)$) and $d>0$ (except when $\ell =\tfrac m2$), hence $\delta(\Gamma)=2$ and $\gamma(\Gamma)=3$. 
We already saw in Example \ref{triangles} that $\gamma(\Gamma_{2,4}(1)) = 4$  
(alternatively, $\Gamma_{2,4}(1)=srg(16,5,0,2)$, and the values $e=0$ and $d=2$ clearly imply that $\gamma(\Gamma)=4$).

To prove (\textit{c}), first recall that, by definitions, the clique number of a graph $X$ (resp.\@ $\bar X$) is the independence number of its complement $\bar X$ (resp.\@ $X$), so $\omega(\Gamma) = \alpha(\bar\Gamma)$ and $\omega(\bar \Gamma) = \alpha(\Gamma)$. Now, we use the known bound 
$\omega(X) \le 1-\tfrac k\mu$, where $X$ is a SRG with degree of regularity $k$ and $\mu < 0$ is the smallest eigenvalue of $X$. 
By Notes \ref{note1} and \ref{note2}, if $\frac{1}{2}m_{\ell}$ is even, then  $\mu=\mu_{\ell}=-\frac{q^{\frac{m}{2}+\ell}+1}{q^{\ell}+1}$ and 
$\mu = \bar{\upsilon}_{\ell} = \tfrac{-q^\ell ( q^{\frac m2-\ell} + 1)}{q^\ell+1}$ for $\Gamma$ and $\bar\Gamma$, respectively. 
Then we have that 
$$\alpha(\bar \Gamma) = \omega( \Gamma)\le \tfrac{q^m+q^{\frac{m}{2}+\ell}}{q^{\frac{m}{2}+\ell}+1}\qquad 
\text{and}\qquad \omega(\bar \Gamma) = \alpha(\Gamma) \le \tfrac{q^m+q^{\frac{m}{2}-\ell}}{q^{\frac{m}{2}-\ell}+1}.$$ 
Thus, (\textit{d}) is a consequence of (\textit{c}) because of the general bound 
$\alpha(X) \chi(X) \ge n$, where $X$ is a 
graph or order $n$. If $\tfrac{1}{2}m_{\ell}$ is odd, $\Gamma$ contains the graph $\Gamma_{q,m}(\tfrac{m}{2})$ by 
Proposition~\ref{teo subg}, then $q^{\ell}+1\mid q^{\frac m2}+1$. Therefore, 
$\omega(\Gamma) = \alpha(\bar\Gamma) = \omega(\bar \Gamma) = \alpha(\Gamma) = \chi(\Gamma) = \chi(\bar \Gamma) = q^{\frac m2}$
 (see \cite{SS}).

For ($e$), it is a well-known fact that the vertex and edge connectivity 
of a strongly regular graph is equal to its regularity degree (see \cite{BH2}, \cite{BM}).
So, in particular we have that $\kappa(\Gamma)= \varepsilon(\Gamma) = k_\ell = \tfrac{q^m-1}{q^\ell+1}$ and $\kappa(\bar \Gamma)= \varepsilon(\bar \Gamma) = \bar k_\ell=q^\ell k_\ell$, as desired.


Now, for ($f$), $\theta_2(\Gamma)$ is by definition the minimum nonzero Laplacian eigenvalue of $\Gamma$. 
Recall that the Laplacian eigenvalues $\{\mu_i\}$ of a $k$-regular graph are related with the eigenvalues $\{\lambda_i\}$ by 
$\mu_i=k-\lambda_i$. Since $\Gamma$ is $k_{\ell}$-regular, then 
$\theta_2(\Gamma) = k_\ell - \upsilon_\ell$ if $\tfrac 12 m_{\ell}$ is odd and 
$\theta_2(\Gamma) = k_{\ell} - \mu_{\ell}$ if $\tfrac 12 m_\ell$ is even. On the other hand $\bar\Gamma$ is $q^{\ell}k_{\ell}$-regular, then $\theta_2(\Gamma) = q^{\ell}k_\ell - \bar\mu_\ell$ if $\tfrac 12 m_{\ell}$ is odd and 
$\theta_2(\Gamma) = q^{\ell}k_{\ell} - \bar\upsilon_{\ell}$ if $\tfrac 12 m_\ell$ is even.

Finally, to see (\textit{g}), given a graph $X$ the isoperimetric constant $h(X)$ satisfies (see \cite{Mo})
$$\tfrac 12 \theta_2(X) \le h(X) \le \sqrt{\theta_2(X) (2\Delta(X)-\theta_2(X))},$$ 
where $\Delta(X)$ is the maximal degree of $X$.
Since 
$\Delta(\Gamma) = k_{\ell}$ 
and $\Delta(\bar \Gamma) = q^{\ell}k_{\ell}$, 
the inequalities in the statement follows directly from the above ones and item ($f$), and the proof is complete.
\end{proof}

We now illustrate several previous results. 
\begin{exam}
The smallest graph in $\mathcal{G}_{q,m}$ having a connected subgraph ($\ell \ne \frac m2$) corresponds to 
$q=2$, $m=12$ and $\ell=1$, giving the pair of subgraphs 
$$\Gamma_{2,12}(3) \subset \Gamma_{2,12}(1) \qquad \text{and} \qquad \bar \Gamma_{2,12}(1) \subset \bar \Gamma_{2,12}(3).$$ 
In the following table we show their parameters as strongly regular graphs, the intersection arrays and their eigenvalues computed with Theorem \ref{Spec Gamma},  
Propositions \ref{Spec complement} and \ref{prop srg}, and Corollary \ref{coro array}.

\renewcommand*{\arraystretch}{1.15}
\begin{center}
\begin{tabular}{|l|l|c|lll|}
\hline
{graphs} 
& {srg parameters} & {intersection array} 
& & {eigenvalues} 
&  \\
\hline
$\Gamma_{2,12}(1)$ 			& $(4096, 1365, 440, 462)$ &  $(1365, 924; 1, 462)$ & $k_1 = 1365$, & $\upsilon_1 = 21$, & $\mu_1 = -43$ \\
\hline
$\Gamma_{2,12}(3)$ 			& $(4096, 455, 54, 50)$ 		&	$(455, 400; 1, 50)$ & $k_3= 455$, \; & $\upsilon_3= 7$, \; & $\mu_3= -57$      \\
\hline
$\bar \Gamma_{2,12}(1)$ & $(4096, 2730, 1826, 1086)$ & $(2730, 903; 1, 1804)$ & $\bar k_1 = 2730$, \; & $\bar \upsilon_1 = 42$, \; & $\bar \mu_1 = -22$ \\
\hline
$\bar \Gamma_{2,12}(3)$ & $(4096, 3640, 3234, 3240)$ & $(3640, 405; 1,  832)$ & $\bar k_3 = 3640$, \; & $\bar \upsilon_3 = -8$, \; & $\bar \mu_3 = 56$ \\
\hline
\end{tabular}
\end{center}
From the above table one can check that $924=2^1 \cdot 462$ and $400=2^3 \cdot 50$. Moreover, $k_i = (2^6+1) \upsilon_i$ for $i=1,3$ and that $k_3 \mid k_1$ and $\upsilon_3 \mid \upsilon_1$.
Also, $\bar k_1 = 2 k_1$, $\bar \mu_1 = 2 \mu_1$, $\bar \upsilon_1 +1= \tfrac{\mu_1+1}{2}$ and 
$\bar k_3 = 2^3 k_3$, $\bar \mu_3 = 2^3 \upsilon_3$, $\bar \upsilon_3 +1= \tfrac{\mu_3+1}{2^3}$.
\hfill $\lozenge$
\end{exam}

\section{Energy} \label{sec:6}
Another important spectral invariant of a graph is its energy. Given a graph $\Gamma$ of $n$ vertices with eigenvalues $\{\lambda_i\}_{i=1}^n$, 
the \textit{energy} of $\Gamma$ is defined by 
\begin{equation} \label{energy} 
E(\Gamma) = \sum_{i=1}^n |\lambda_i|.
\end{equation}

As a corollary of Theorem \ref{Spec Gamma}, we obtain the energy of the graphs $\Gamma_{q,m}(\ell)$ and $\bar \Gamma_{q,m}(\ell)$.
\begin{prop} \label{prop energy}
	The energy of $\Gamma=\Gamma_{q,m}(\ell) \in \mathcal{G}_{q,m}$ and $\bar \Gamma = \bar \Gamma_{q,m}(\ell) \in \mathcal{G}_{q,m}$ are given by  
\begin{align} \label{energys}
\begin{split}
E(\Gamma) 
& = \begin{cases} 
2k_\ell |\mu_\ell|, 						 & \quad  \text{ if $\frac 12 m_\ell$ is even}, \\[1.5mm]
2k_\ell q^\ell |\upsilon_\ell|,				 & \quad  \text{ if $\frac 12 m_\ell$ is odd},
\end{cases}  
\\[1.5mm]
E(\bar \Gamma) 
&= \begin{cases} 
2 \bar k_\ell |\bar \upsilon_\ell|, 		& \qquad  \text{ if $\frac 12 m_\ell$ is even}, \\[1.5mm]
2  \bar k_\ell |\upsilon_\ell|, 			& \qquad  \text{ if $\frac 12 m_\ell$ is odd}, 
\end{cases}
\end{split} 
\end{align}
for $\ell \ne \tfrac m2$ and 
\begin{equation} \label{energies m2}
E(\Gamma_{q,m}(\tfrac m2)) = E(\bar \Gamma_{q,m}(\tfrac m2)) =  2q^{\frac m2}(q^{\frac m2}-1) = 2 k_\ell q^\ell = 2\bar k_\ell,
\end{equation}
where $k_\ell$, $\upsilon_\ell$, $\mu_\ell$ and $\bar k_\ell$, $\bar \upsilon_\ell$, $\bar \mu_\ell$ 
are the eigenvalues of $\Gamma_{q,m}(\ell)$ and $\bar \Gamma_{q,m}(\ell)$ as given in 
Theorem~\ref{Spec Gamma} and Proposition \ref{Spec complement}, respectively. 
In particular, $2k\mid E(\Gamma)$ and $2\bar k \mid E(\bar \Gamma)$ for any $\ell$ and also
$\lambda_{min} \mid E(\Gamma)$ and $\bar \lambda_{min} \mid E(\bar \Gamma)$ for $\ell \ne \frac m2$, 
where $\lambda_{min}$, $\bar \lambda_{min}$ are the smallest eigenvalues of $\Gamma_{q,m}(\ell)$,
$\bar \Gamma_{q,m}(\ell)$ respectively. 
\end{prop}

\begin{proof}
By Theorem~\ref{Spec Gamma} we have that 
$Spec(\Gamma_{q,m}(\ell)) = \{ [k_\ell]^1, [\nu_\ell]^{q^\ell k_\ell}, [\mu_\ell]^{k_\ell} \}$ 
for $\ell\ne \frac m2$ with $k_\ell$, $\nu_\ell$ and $\mu_\ell$ as in \eqref{Autval} and 
$Spec(\Gamma_{q,m}(\frac m2)) = \{ [q^{\frac m2}-1]^{q^{\frac m2}}, [-1]^{q^{\frac m2}(q^{\frac m2}-1)} \}$.
By \eqref{energy} we have
	$$E(\Gamma_{q,m}(\ell)) = k_\ell + m(\upsilon_\ell) \, |\upsilon_\ell| + m(\mu_\ell) \, |\mu_\ell| \qquad \text{and} \qquad  
E(\Gamma_{q,m}(\tfrac m2)) = 2q^{\frac m2}(q^{\frac m2}-1)$$
for $\ell \ne \frac m2$. 
The expression for $E(\Gamma_{q,m}(\ell))$ with $\ell \ne \frac m2$ follows directly after straightforward but tedious computations from \eqref{Autval}, considering the different cases. To compute $E(\bar \Gamma_{q,m}(\ell))$ we proceed similarly using Proposition \ref{Spec complement}.
The divisibility of the energies of $\Gamma$ and $\bar \Gamma$ by $2k$ and $2\bar k$ respectively are obvious from \eqref{energys}. 

For the remaining assertion,  notice that by Notes \ref{note1} and \ref{note2} and Corollary \ref{prop eigen comps}, in case $\ell \ne \frac m2$ we have $E(\Gamma) = 2 k |\lambda_{min}|$ if $\tfrac 12 m_\ell$ is even and $E(\Gamma) = 2 \bar k |\lambda_{min}|$ if $\tfrac 12 m_\ell$ is odd, while $E(\bar \Gamma) = 2 \bar k |\bar \lambda_{min}|$, 
if $\tfrac 12 m_\ell$ is even and, $E(\Gamma) = 2 k |\bar \lambda_{min}|$ if $\tfrac 12 m_\ell$ is odd.
The result clearly follows from this observation.  
\end{proof}

\subsubsection*{Equienergetic non-isospectral graphs}
Two graphs of the same order are said to be \textit{equienergetic} if they have the same energy and \textit{isospectral} if they have the same spectrum.
Thus, by \eqref{energys}, no two of the graphs in $\mathcal{G}_q$ nor in $\bar{\mathcal{G}}_q$ are mutually equienergetic; 
that is 
$$E(\Gamma_{q,m}(\ell)) \ne E(\Gamma_{q,m}(\ell')) \qquad \text{and} \qquad E(\bar \Gamma_{q,m}(\ell)) \ne E(\bar \Gamma_{q,m}(\ell'))$$
for every fixed $q$, $m$ and every $\ell \ne \ell'$. 
By definition, isospectrality implies equienergeticity. The converse is false; the simplest counterexample 
is given by the pair $C_4$ and $K_2 \otimes K_2$, since $Spec(C_4) = \{[0]^2, [2]^2\}$ and 
$Spec(K_2 \otimes K_2) = \{[-1]^2, [1]^2 \}$, and hence $E(C_4)=E(K_2 \otimes K_2)=4$.
There are some more examples in the literature of equienergetic non-isospectral graphs. 
	
Equienergetic and non-isospectral Cayley graphs were studied for unitary Cayley graphs $G_R=X(R,R^*)$ over a finite commutative ring with identity (\cite{PV Equi1}) and for generalized Paley graphs (\cite{PV Equi3}, \cite{PV Equi2}).
As a consequence of the expressions for the energy in Proposition \ref{prop energy}, 
we now give some pairs of equienergetic and non-isospectral integral graphs obtained with our families.  

\begin{prop} \label{coro pairs}
We have the following pairs of equienergetic and non-isospectral graphs: 
	\begin{enumerate}[$(a)$] 
		\item $\Gamma_{q,m}(\ell)$ and $\bar \Gamma_{q,m}(\ell)$, for $\tfrac 12 m_\ell$ odd. \msk
		
		\item $\Gamma_{q,m}(\frac m2) = q^{\frac m2} K_{q^{\frac m2}}$ and 
		$\bar \Gamma_{q,m}(\frac m2) = K_{q^{\frac m2} \times q^{\frac m2}}$. \msk 
		 
		\item $\Gamma_{q,m}(\ell) \times K_2$ and $\Gamma_{q,m}(\ell) \otimes K_2$, for $\ell \ne \frac m2$. \msk 
		
		\item $\bar \Gamma_{q,m}(\ell) \times K_2$ and $\bar \Gamma_{q,m}(\ell) \otimes K_2$, for $\ell \ne \frac m2$.
	\end{enumerate}
\end{prop}

\begin{proof}
	The pairs of graphs in ($a$) and ($b$) are equienergetic by \eqref{energys}, since $\bar k = kq^\ell$, and 
	\eqref{energies m2} respectively. 
	The graphs in each equienergetic pair are non-isospectral to each other since $\Gamma_{q,m}(\ell)$ and $\bar \Gamma_{q,m}(\ell)$ have different spectra by Theorem \ref{Spec Gamma} and Proposition \ref{Spec complement}.
	
	Parts ($c$) and ($d$) are consequences of Theorem 8 in \cite{BVM}, which asserts that if $G$ is a connected graph with eigenvalues $\lambda_1,\ldots,\lambda_n$, then $G \times K_2$ and $G \otimes K_2$ are equienergetic and non-isospectral if and only if $|\lambda_i|\ge 1$ for $i=1,\ldots,n$. The graphs $\Gamma_{q,m}(\ell)$ and $\bar \Gamma_{q,m}(\ell)$ are integral. 
	For $\ell \ne \frac m2$, the graphs 
	$\Gamma_{q,m}(\ell)$ are connected with nonzero eigenvalues, hence ($c$) follows. The graphs $\bar \Gamma_{q,m}(\ell)$ are always connected and have non-trivial eigenvalues if $\ell \ne \frac m2$, this imply ($d$).
\end{proof}

Related to items ($a$) and ($b$), in \cite{PV Equi1} we have obtained pairs of integral equienergetic and non-isospectral graphs $\{G_R,\bar G_R\}$. On the other hand, in Theorem 5.4 (resp.\@ Proposition 6.6) in \cite{PV Equi2}, we have classified all strongly regular graphs (resp.\@ semiprimitive GP-graphs) which are equienergetic and non-isospectral with their complements. The graphs in item ($a$) (resp.\@ ($a$) and ($b$)) fall into this case. 
Also, in \cite{PV Equi3} we have obtained infinite pairs of integral graphs $\Gamma(3,q)$ and $\Gamma(4,q)$ as in \eqref{Gkq} which are equienergetic and non-isospectral with their complements in the non-semiprimitive case (in the semiprimitive case,  $\Gamma(3,p^m)=\Gamma_{2,m}(1)$ and $\Gamma(4,p^m)=\Gamma_{3,m}(1)$).

\subsubsection*{Hyperenergeticity and maximal energy}
It is known that $E(\Gamma) \ge 2\sqrt{n-1}$ with equality if and only if $\Gamma = K_{1,n-1}$, i.e.\@ for $1$-stars (\cite{Gu}).
A graph $\Gamma$ is called \textit{hyperenergetic} if 
$$E(\Gamma) > E(K_n) = 2(n-1).$$
We now show that all the graphs considered are generically hyperenergetic.

\begin{coro} \label{coro hyper}
	The graphs $\Gamma_{q,m}(\ell) \in \mathcal{G}_{q,m}$ and $\bar \Gamma_{q,m}(\ell) \in \bar{\mathcal{G}}_{q,m}$ 
	with $\ell \ne \frac m2$ are hyperenergetic, except for the Clebsch graph $\Gamma_{2,4}(1)$.
\end{coro}

\begin{proof}
	The result follows directly from \eqref{energys} using Theorem \ref{Spec Gamma} and Proposition~\ref{Spec complement}. In fact, 
	for $\ell \ne \frac m2$, one checks that 
	$E(\Gamma_{q,m}(\ell)) > 2(q^m-1)$ and $E(\bar \Gamma_{q,m}(\ell)) > 2(q^m-1)$ for any $(q,m,\ell) \ne (2,4,1)$. 
	(This is a direct proof. It is known that every SRG is hyperenergetic unless four cases (\cite{PM}); and out of these,
	only $srg(16,5,0,2)=\Gamma_{2,4}(1)$ is in our family.)
\end{proof}

Koolen and Moulton (\cite{KM}) gave two upper bounds for the energy of a graph $\Gamma$ with $n$ vertices and $e$ edges: 
one in terms of $n$ only
\begin{equation} \label{energy upper1}
E(\Gamma) \le \tfrac 12 n(1+\sqrt n);
\end{equation} 
and another one which, in the case that $\Gamma$ is $k$-regular, takes the following simple form
\begin{equation} \label{energy upper2}
E(\Gamma) \le k + \sqrt{k(n-1)(n-k)}.
\end{equation} 
A graph has \textit{maximal energy} 
if equality holds in \eqref{energy upper1} and we will say that it has \textit{$k$-maximal energy} 
if equality holds in \eqref{energy upper2}. 

\begin{coro} \label{comp clebsch}
	No graph in the families $\mathcal{G}_{q,m}$ and $\bar{\mathcal{G}}_{q,m}$ has maximal energy.
	The only $k$-maximal energetic graph in $\mathcal{G}_{q,m} \cup \bar{\mathcal{G}}_{q,m}$ is the complement of the Clebsch graph $\bar \Gamma_{2,4}(1)=srg(16,10,6,6)$.
\end{coro}

\begin{proof}
	The first assertion follows from Proposition \ref{prop energy} by a case-by-case analysis.
	For the second one, by Theorem 3 in \cite{KM}, equality holds in \eqref{energy upper1} if and only if $\Gamma$ is an SRG 
	with parameters	$srg(n, \frac{n+\sqrt n}2, \frac{n+\sqrt n}4, \frac{n+\sqrt n}4)$. By Remark \ref{inf fam}, the only possibility is $\bar \Gamma_{2,4}(1)=srg(16,10,6,6)$.
\end{proof}

Although the graphs in the families $\mathcal{G}_{q,m}$ and $\bar{\mathcal{G}}_{q,m}$ are not $k$-maximal energetic, we will show that they are \textit{asymptotically} $k$-maximal energetic. 

Denote by $E=E_0(\Gamma) = k + \sqrt{k(n-1)(n-k)}$ the right hand side of \eqref{energy upper2} and let
$\bar E = E_0(\bar \Gamma)$ 
be the corresponding one for the complement graph $\bar \Gamma$.
Balakrishnan proved (\cite{Bal}) that for each $\epsilon >0$, there exist infinitely many non-complete unitary Cayley graphs 
$\Gamma_n = X(\mathbb{Z}_n,\mathbb{Z}_n^*)$ such that $E(\Gamma_n)< \epsilon E_0$. We now show that there is an infinite family of generalized Paley graphs satisfying the same condition.

\begin{prop} \label{min Energy}
Given $\epsilon>0$, for fixed $m, \ell$ (resp.\@ $q$) there exist $q_0$ (resp.\@ $m_0$, $\ell_0$) 
such that for every $q\ge q_0$ (resp.\@ $m\ge m_0$, $\ell\ge \ell_0$) we have
$E(\Gamma_{q,m}(\ell)) < \epsilon E_0$ and $E(\bar \Gamma_{q,m}(\ell)) < \epsilon \bar E_0$.
\end{prop}

\begin{proof}
It is enough to show that $\lim\limits_{q \rightarrow \infty} \tfrac{E(\Gamma_{q,m}(\ell))}{E_0}=0$ and 
$\lim\limits_{m, \ell \rightarrow \infty} \tfrac{E(\Gamma_{q,m}(\ell))}{E_0}=0$, and similarly for the limits involving $\bar \Gamma_{q,m}(\ell)$.
Notice that $E_0=k_{\ell}+\sqrt{k_{\ell}(q^m-1)(q^m-k_{\ell})}$ where $k_{\ell}=\tfrac{q^m-1}{q^\ell+1}$, then we have 
	$$E_0=k_{\ell}(1+\sqrt{q^{m+\ell}+1}).$$ 
	By Theorem \ref{Spec Gamma} and Proposition \ref{prop energy} we have that    
$$ 	\tfrac{E(\Gamma_{q,m}(\ell))}{E_0} = \tfrac{2k_\ell |\mu_\ell|}{k_{\ell}(1+\sqrt{q^{m+\ell}+1})} = \tfrac{2(q^{\frac m2 + \ell}+1)}{(q^\ell+1)(1+\sqrt{q^{m+\ell}+1})}.$$
in the case $\tfrac 12 m_\ell$ even. 
Thus, 
$$\lim_{q \rightarrow \infty} \tfrac{E(\Gamma_{q,m}(\ell))}{E_0} =  \lim_{q \rightarrow \infty} \tfrac{1}{\sqrt{q^\ell}} =0$$
and similarly $\lim_{m, \ell \rightarrow \infty} \tfrac{E(\Gamma_{q,m}(\ell))}{E_0} =  \lim_{m, \ell \rightarrow \infty} q^{- \frac{\ell}{2}} =0$.
The case $\tfrac 12 m_\ell$ odd is analogous.  
For the complements we proceed in the same way, but we omit the details, and the proof is complete. 
\end{proof}

\section{The Waring's problem} \label{sec:8}
Here we give an application of our results to the Waring's problem over finite fields. The Waring's problem is about to compute or estimate the Waring number, which is the smallest number 
\begin{equation} \label{WN}
s=g(k,q)	
\end{equation}

such that the diagonal equation 
	$$x_1^k +\cdots + x_s^k = b$$ 
has a solution for every $b\in \ff_q$ with respect to $k$. In other words, determine the minimum number $s$ such that every element of $\ff_q$ can be written as a sum of at most $s$ terms of $k$-th powers in the field.  

The Waring number $g(k, p^n)$ exists if and only if $\frac{p^n-1}{p^d-1} \nmid k $ for all $d \mid n$, $d\ne n$. 
Also, since $g(k, q) = g(d, q)$ with $d = (k, q-1)$, it is enough to assume that $k\mid p^m-1$.
It is well known that $g(k, p) \le k$ and 
\begin{equation} \label{gkpk}
g(k, p) = k \qquad \Leftrightarrow \qquad k = 1, \, 2, \, \tfrac{p-1}2, \, p-1.
\end{equation}
Moreover, Small proved (see \cite{Sm}) that
\begin{equation} \label{small}
k\mid q-1 \qquad \text{and} \qquad 2 \le k < q^{\frac m4}+1 \qquad \Rightarrow \qquad g(k, q) = 2.
\end{equation}
To find the exact value of $g(k,q)$ is in general difficult and many upper bounds are known 
(see for instance \cite{MP}, or the survey \cite{Win}). The Waring numbers of arbitrary generalized Paley graphs were recently studied by the authors in \cite{PV} and \cite{PV2}.  

\begin{thm} \label{teo waring}
Let $q$ be a prime power and let $m,\ell$ be positive integers such that $\ell \mid m$ with $m_\ell$ even. 
If $\ell \neq \tfrac m2$, then  
$g(q^\ell+1,q^m) = 2$.
\end{thm}

\begin{proof}
By Proposition \ref{properties}, the graph $\Gamma_{q,m}(\ell)$ is connected since $\ell \neq \frac m2$. 
Moreover, $\Gamma_{q,m}(\ell)$ has diameter 2 since it is a strongly regular graph. 
Thus, if $a\in \ff_{q^m}^*$ then $1\le d(a,0) \le 2$. 
If $d(a,0)=1$, then $a0$ is an edge an hence $a-0 \in S_\ell$, i.e.\@ 
$a=x^{q^\ell+1}$ for some $x\in \ff_{q^m}^*$. 
On the other hand, if $d(a,0)=2$ then there is a path $ab0$, that is $a-b\in S_\ell$ and 
$b-0\in S_\ell$. In other words, there exist elements $x_1, x_2 \in \ff_{q^m}^*$ such that 
	$a-b=x_1^{q^\ell+1}$ 
	and $b=x_2^{q^\ell+1}$, 
from which we get that 
	$$a=x_1^{q^\ell+1} + x_2^{q^\ell+1}$$ 
and hence $g(q^\ell+1, q^m) = 2$, as desired.  
\end{proof}
Note that if $q$ is even and $m_\ell$ is odd, $\Gamma_{q,m}(\ell)$ is the complete graph $K_{q^m}$ which has diameter one. Hence, 
$g(k,q^m)=1$ in this case.

\begin{rem}
$(i)$ If $q=p^m$ with $m=4t$, taking $\ell = \frac{m}{4}$ we have that $m_{\ell}=4$ is even and thus
$g(p^{\frac{m}{4}}+1, q)=2$, by Theorem \ref{teo waring} since $p^{\frac m4}+1 \mid p^m+1$. 
In this way, we have extended a little bit Small's result \eqref{small}, that is $g(k,q)=2$ for any $2\le k\le q^{\frac 14}+1$.

\noindent ($ii$) 
In 2008, Moreno and Castro \cite{MC} showed that $g(k,p^{2\ell s})=2$ for any $k\mid p^{\ell}+1$ with $k<p^{\ell}+1$ and $s>1$.
Taking $q=p$ prime in Theorem \ref{teo waring} we obtain, in particular, the extreme case missing in Moreno-Castro's result, that is $g(p^{\ell}+1,p^{2\ell s}) = 2$ for any $s>1$.
\end{rem}

We now stress that there are generalized Paley graphs not in the families considered in this paper that are not strongly regular.
\begin{rem}
Consider the generalized Paley graph $\Gamma = GP(p^m,\frac{p^m-1}{k})$ with $k$ not necessarily equal to $p^\ell+1$ 
(see Remark \ref{GP}). By using the same argument as in the proof of Theorem \ref{teo waring}, one can show that if 
$\Gamma=srg(v,k',e,d)$ is strongly regular with $d>0$, then $g(k,p^m)=2$. Since it is known that in general $g(k,p^m)\ne 2$ 
(see Chapter 7 of \cite{MP}) then there must exist generalized Paley graphs which are not strongly regular. 

For instance, we have $g(\frac{p-1}2,p)=\frac{p-1}2$ by \eqref{gkpk}, hence $g(\frac{p-1}2,p) \ne 2$ 
for $p\ne 5$. In this case, 
the graph 
	$$GP(p,2)=X(\ff_p, \{x^{(p-1)/2} :x\ne 0 \})$$ 
is the cycle $C_p$ which is not strongly regular. 
For non-trivial examples of this kind we can take the pairs $(k,p) = (4,17),(5,31)$ with Waring number $g(k,p)=3$ 
(see the table in \S7.3.50 of \cite{MP}). 
This implies that the graphs 
	$$GP(17,4) = X(\ff_{17},\{x^4 : x\ne 0\}) \qquad \text{ and } \qquad GP(31,6) = X(\ff_{31},\{x^5:x\ne 0\})$$ 
are not strongly regular. 
\end{rem}

\section{Ramanujan graphs} \label{sec:9}
We recall that a $k$-regular graph $G$ is Ramanujan if for any eigenvalue $\lambda$ of $G$ it holds $\lambda=k$ or $|\lambda|\le 2\sqrt{k-1}$. It is very well-known that the complete graphs $K_n$ are Ramanujan for every $n$ and Paley graphs $P({q})$ are Ramanujan for any $q\equiv 1 \pmod 4$.

We will next show that some of the graphs $\Gamma_{q,m}(\ell)$ are Ramanujan (this can only happen in characteristics $2$ and $3$)  while all the complements $\bar \Gamma_{q,m}(\ell)$ are Ramanujan graphs.

\begin{thm} \label{Ram q23}
The graph $\Gamma_{q,m}(\ell)\in \mathcal{G}_{q,m}$ is Ramanujan if and only if 
$q\in \{2, 3, 4\}$, $\ell=1$ and $m\ge 4$. 
Furthermore, any graph $\bar\Gamma_{q,m}(\ell)\in \bar{\mathcal{G}}_{q,m} $ is Ramanujan.
\end{thm}

\begin{proof}
Suppose first that $\Gamma_{q,m}(\ell)$ is Ramanujan. 
Thus, since $m_\ell$ is even (hence $m$ is even), by Theorem~\ref{Spec Gamma} we have that $\# S_{\ell} = k_\ell$ and that the non-trivial eigenvalue with the biggest absolute value is $\mu_\ell$ given in \eqref{Autval}, that is $\lambda(\Gamma_{q,m}(\ell)) = |\mu_\ell|$. 
Then, they satisfy the inequality \eqref{rama} which now takes the form
$$\tfrac{q^{\frac{m}{2}+\ell} + \varepsilon}{q^{\ell}+1} \leq 2 \big(\tfrac{q^m-1}{q^{\ell}+1}-1 \big)^{\frac 12}$$
which is equivalent to 
\begin{equation} \label{desig raman}
(q^{\frac{m}{2}+\ell} + \varepsilon)^2\leq 4(q^{\ell}+1) (q^m-q^{\ell}-2).
\end{equation} 
From this we get 
$q^{\frac m2} (q^{\frac m2 +\ell} + 2\varepsilon q^\ell-4(q^\ell+1)q^{\frac m2}) \le -4(q^\ell+1)(q^\ell+2) <0$.
In particular, we must have 
	$$q^{\frac m2 +\ell} + 2\varepsilon q^\ell-4(q^\ell+1)q^{\frac m2}<0.$$
Taking $x=q^\ell$, this is equivalent to $x^2-2bx-4\le 0$ with $b=\varepsilon q^{-\frac m2}-2<0$. This implies that $x <-2b$, that is
$$q^\ell \le 4- \tfrac{2\varepsilon}{q^{\frac m2}}.$$ 
This can only happen when $q=2,3,4$ with $\ell=1$ or $q=2$ with $\ell=2$. But this last case cannot occur since if $q=2$ and $\ell=2$, the graph $\Gamma_{2,m}(2)$ satisfy \eqref{rama} only for $m=2,4$. But the graphs $\Gamma_{2,2}(2)$ and $\Gamma _{2,4}(2)$ are not allowed. 

Now, if $m=2$, the graphs $\Gamma_{2,2}(1)$, $\Gamma_{3,2}(1)$ and $\Gamma_{4,2}(1)$ are not Ramanujan. These graphs, by part ($b$) of Theorem \ref{Spec Gamma} are a disjoint union of complete graphs, hence not connected (although $K_n$ is Ramanujan), since $\ell=1=\tfrac m2$ in this case. 
Therefore, we must have $m \ge 4$ when $q=2,3,4$ with $\ell=1$.

\smallskip

Now we check that the graphs $\Gamma_{q,m}(1)$ with $q=2,3,4$ and $m\ge 4$ are indeed Ramanujan. 
Suppose that $q=2$. 
In this case, \eqref{desig raman} reads $(2^{\frac{m}{2}+1}+ \varepsilon)^2 \leq 12 (2^m-4)$,  
which in turn is equivalent to $2^{m+3}-\varepsilon 2^{\frac m2 +2}\ge 49$
which clearly holds for any $m\ge 4$.
Therefore $\Gamma_{2,2t}(1)$ is Ramanujan for all $t\ge 2$. 
Now, assume that $q=3$. 
In this case, \eqref{desig raman} reads
$$3^{m+2} + 2\varepsilon 3^{\frac{m}{2}+1}+1\leq 16(3^m-5)=3^{m+2}+7\cdot3^m-80,$$ 
which in turn gives
$7\cdot3^m - 2 \varepsilon 3^{\frac{m}{2}+1} \ge 81$, which holds 
for $m\geq 4$. Therefore $\Gamma_{3,2t}(1)$ is Ramanujan for all $t\ge 2$.
Finally, if $q=4$, \eqref{desig raman} takes the form 
$(4^{\frac m2+1}+\varepsilon)^2 \le 20(4^m-6)$, which is equivalent to $4^{m+3}-2\varepsilon 4^{\frac m2 +1}\ge 121$, which holds for every $m\ge 2$. However, $\Gamma_{4,2}(1)$ is not Ramanujan since it is not connected. Thus $\Gamma_{4,2t}(1)$ is Ramanujan for every $t\ge 2$, and the result thus follows.

\sk 

Finally, we show that any complementary graph $\bar \Gamma =\bar\Gamma_{q,m}(\ell)$ in the family $\bar{\mathcal{G}}_{q,m}$ is Ramanujan. We know that $\bar \Gamma$ is a connected $q^{\ell}k_{\ell}$-regular graph. By Proposition \ref{Spec complement}, the non-trivial eigenvalue of highest absolute value of $\Gamma$ is $\bar{\mu}_{\ell} = q^{\ell} \upsilon_{\ell}$. Then we just have to prove that \eqref{rama} holds, that is  
$$\tfrac{q^{\ell}(q^{\frac{m}{2}}-\varepsilon)}{q^{\ell}+1} 
\le 2 \big\{ \tfrac{q^{\ell}(q^m-1)}{q^{\ell}+1}-1 \big\}^{\frac 12}.$$
This inequality is equivalent to 
$q^{2\ell}(q^m-2\varepsilon q^{\frac{m}{2}}+1)\le 4(q^{\ell}+1)(q^{m+\ell}-2q^{\ell}-1)$,	
which in turn holds if and only if 
$$3q^{m+2\ell} +4q^{m+\ell} + 2\varepsilon q^{\frac{m}{2}+\ell} \ge 8q^{2\ell} +12q^{\ell} + 4.$$ 
This inequality can be further simplified to 
$$ q^{\frac m2} (3q^{\frac m2+\ell} +4q^{\frac m2} + 2\varepsilon) \ge 8 (q^{\ell} +2)$$ 
which is always true, because $\ell\ne \frac{m}{2}$ and $m\ge4$ in all these cases.
\end{proof}

\begin{rem}
We have shown that $\mathcal{G} = \{\mathcal{G}_{q,m}\}_{q,m}$ contains 3 infinite families of Ramanujan graphs, namely
\begin{equation} \label{familes234} 
\mathcal{G}_2 := \{ \Gamma_{2,2t}(1) \}_{t\ge 2}, \qquad \mathcal{G}_3 := \{ \Gamma_{3,2t}(1) \}_{t\ge 2}, 
\qquad \text{and} \qquad  \mathcal{G}_4 := \{\Gamma_{4,2t}(1) \}_{t\ge 2}.
\end{equation}
Thus, we have obtained infinite families of Ramanujan graphs defined over $\ff_2, \ff_3$ and $\ff_4$.

By Proposition \ref{teo subg}, $\Gamma_{q,m}(1) \subset \Gamma_{q,m'}(1)$ if and only if $m \mid m'$. In this way, 
we can construct several infinite towers of Ramanujan graphs in $\mathcal{G}_q = \{\mathcal{G}_{q,m}\}_m$ (i.e.\@ over $\ff_q$) 
with fixed $q=2,3$ or $4$, although with different regularity degree. 
For instance, for each fixed $q \in\{2,3,4\}$, we can take the following tower 
of $\frac{q^{2^n}-1}{4}$-regular graphs ($n\ge 2$)
$$\Gamma_{q,4}(1) \subset \Gamma_{q,8}(1) \subset \cdots \subset \Gamma_{q,2^n}(1) \subset \Gamma_{q,2^{n+1}}(1) \subset \cdots  $$  
\end{rem}

\begin{rem} 
The first graph in the family $\mathcal{G}_2$ is $\Gamma_{2,4}(1)=srg(16,5,0,2)$ known as the Clebsch graph. 
The first graph in the family $\mathcal{G}_3$, i.e.\@ $\Gamma_{3,4}(1)$, is by Theorem \ref{prop srg} 
an $(81,20,1,6)$ strongly regular graph. Brouwer and Haemers proved that this graph is unique (\cite{BH}). 
In particular, we have proved that the the Clebsch graph $srg(16,5,0,2)$ and the Brouwer-Haemers graph $\Gamma_{3,4}(1)=srg(81,20,1,6)$, as well as their complements, are all Ramanujan graphs. 
\end{rem}

We now give the parameters of the 3 families of Ramanujan graphs over $\ff_2$, $\ff_3$ and $\ff_4$ 
and their complementary families (using Theorem \ref{prop srg}); while for the first 3 graphs in each of the families we give their spectra (using Theorem \ref{Spec Gamma} and Proposition \ref{Spec complement}):

(\textit{a}) Families over $\ff_2$:
\begin{align*}
 & \Gamma_{2,2t}(1)      = srg(4^t, \tfrac{4^t-1}3, \tfrac{4^t + (-2)^{t+1} - 8}{9}, \tfrac{4^t + (-2)^{t} - 2}{9}) \\
 & \bar \Gamma_{2,2t}(1) = srg(4^t, \tfrac{2(4^t-1)}3, \tfrac{4^{t+1} + (-2)^{t} - 14}{9}, \tfrac{4^{t+1} + (-2)^{t+1} - 2}{9})
\end{align*}
\begin{center}
\renewcommand*{\arraystretch}{1.1}
\begin{table}[h!]
\caption{The families over $\ff_2$.}  \label{tabla f2}
\begin{tabular}{|c|l|l|c|}
\hline
$t$ 	& {graph} 				 
&     {srg parameters} 		  
& {spectrum} 
\\ \hline
2			& $			\Gamma_{2,4}(1)$ & $(16,5,0,2)$							& $\{ [5]^{1}, [1]^{10}, [-3]^{5} \}$ \\
2			& $\bar \Gamma_{2,4}(1)$ & $(16,10,6,6)$						& $\{ [10]^{1}, [2]^{5}, [-2]^{10} \}$ \\ \hline
3			& $			\Gamma_{2,6}(1)$ & $(64,21,8,6)$						& $\{ [21]^{1}, [5]^{21}, [-3]^{42} \}$ \\

3			& $\bar \Gamma_{2,6}(1)$ & $(64,42,26,30)$				  & $\{ [42]^{1}, [2]^{42}, [-6]^{21} \}$ \\ \hline
4			& $			\Gamma_{2,8}(1)$ & $(256,85,24,30)$					& $\{ [85]^{1}, [5]^{170}, [-11]^{85} \}$ \\
4			& $\bar \Gamma_{2,8}(1)$ & $(256,170,114,110)$			& $\{ [170]^{1}, [10]^{85}, [-6]^{170} \}$ \\ \hline
\end{tabular}
\end{table}
\end{center}

\renewcommand*{\arraystretch}{1.1}
(\textit{b}) Families over $\ff_3$:
\begin{align*}
 & \Gamma_{3,2t}(1)      = srg(9^t, \tfrac{9^t-1}4, \tfrac{9^t + 2 (-3)^{t+1} - 11}{16}, \tfrac{9^t + 2(-3)^{t} - 3}{16}) \\
 & \bar \Gamma_{3,2t}(1) = srg(9^t, \tfrac{3(9^t-1)}4, \tfrac{9^{t+1} + 2(-3)^{t} - 27}{16}, \tfrac{9^{t+1} + 2(-3)^{t+1} - 3}{16})
\end{align*}
\begin{center}
\begin{table}[h!]
\caption{The families over $\ff_3$.} \label{tabla f3}
\begin{tabular}{|c|l|l|c|}
\hline
$t$ 	& {graph} 
				 &    {srg parameters} 
				 		  & {spectrum} 
				 		  \\ \hline
2			& $			\Gamma_{3,4}(1)$ & $(81,20,1,6)$							& $\{ [20]^{1},  [2]^{60}, [-7]^{20} \}$ \\
2			& $\bar \Gamma_{3,4}(1)$ & $(81,60,45,42)$						& $\{ [60]^{1},  [6]^{20}, [-3]^{60} \}$ \\ \hline
3			& $			\Gamma_{3,6}(1)$ & $(729,182,55,42)$					& $\{ [182]^{1}, [20]^{182}, [-7]^{546} \}$ \\
3			& $\bar \Gamma_{3,6}(1)$ & $(729,546,405,420)$				& $\{ [546]^{1}, [6]^{546}, [-21]^{182}\}$ \\ \hline
4			& $			\Gamma_{3,8}(1)$ & $(6561,1640,379,420)$			& $\{ [1640]^{1}, [20]^{4920}, [-61]^{1640}\}$ \\
4			& $\bar \Gamma_{3,8}(1)$ & $(6561,4921,3699,3660)$		& $\{ [4921]^{1}, [60]^{1640}, [-21]^{4920}\}$ \\ \hline
\end{tabular}
\end{table}
\end{center}

(\textit{c}) Families over $\ff_4$:
\begin{align*}
 & \Gamma_{4,2t}(1)      = srg(16^t, \tfrac{16^t-1}5, \tfrac{16^t +3(-4)^{t+1} - 14}{25}, \tfrac{16^t + 3(-4)^{t} - 4}{25}) \\
 & \bar \Gamma_{4,2t}(1) = srg(16^t, \tfrac{4(16^t-1)}5, \tfrac{16^{t+1} + 3(-4)^{t} - 44}{25}, \tfrac{16^{t+1} + 3(-4)^{t+1} - 4}{25})
\end{align*}
\begin{center}
\begin{table}[h!]
\caption{The families over $\ff_4$.} \label{tabla f4}
\begin{tabular}{|c|l|l|c|}
\hline
$t$ 	& {graph} 				 
&      {srg parameters} 		  
& {spectrum} 
\\ \hline
2			& $			\Gamma_{4,4}(1)$ & $(256,51,2,12)$								& $\{ [51]^{1}, [3]^{204}, [-13]^{51} \}$ \\
2			& $\bar \Gamma_{4,4}(1)$ & $(256,204,164,156)$						& $\{ [204]^{1}, [12]^{51}, [-4]^{204} \}$ \\ \hline
3			& $			\Gamma_{4,6}(1)$ & $(4096,819,194,156)$						& $\{ [819]^{1}, [51]^{819}, [-13]^{3276} \}$ \\
3			& $\bar \Gamma_{4,6}(1)$ & $(4096,3276,2612,2652)$				& $\{ [3276]^{1}, [12]^{3276}, [-52]^{819} \}$ \\ \hline
4			& $			\Gamma_{4,8}(1)$ & $(65536,13107,2498,2652)$			& $\{ [13107]^{1}, [51]^{52428}, [-205]^{13107}\}$ \\
4			& $\bar \Gamma_{4,8}(1)$ & $(65536,52428,41972,41820)$		& $\{ [52428]^{1}, [204]^{13107}, [-52]^{52428}\}$ \\ \hline
\end{tabular}
\end{table}
\end{center}

The parameters of $\Gamma_{2,4}(1)$, $\bar \Gamma_{2,4}(1)$, $\Gamma_{2,6}(1)$, $\bar \Gamma_{2,6}(1)$, 
$\Gamma_{2,8}(1)$, $\bar \Gamma_{2,8}(1)$, $\Gamma_{3,4}(1)$, $\bar \Gamma_{3,4}(1)$, $\Gamma_{3,6}(1)$, $\bar \Gamma_{3,6}(1)$ and 
$\Gamma_{4,4}(1)$, $\bar \Gamma_{4,4}(1)$ are in coincidence with the parameters in the tables of Brouwer's web page (see \cite{BWP}). 
The remaining graphs, 
	$$\Gamma_{3,8}(1), \bar \Gamma_{3,8}(1), \qquad \Gamma_{4,6}(1), \bar \Gamma_{4,6}(1) \qquad \text{and} \qquad  \Gamma_{4,8}(1), \bar \Gamma_{4,8}(1),$$ 
are not in those tables.

\goodbreak

\subsubsection*{Ihara zeta function}
Ihara proved that a graph is Ramanujan if and only if the Ihara zeta function of $\Gamma$ satisfies the Riemann hypothesis in this context (\cite{Ih}). So, we now focus on these functions for our graphs.

\goodbreak 

The Ihara zeta function $\zeta_\Gamma(t)$ of a $k$-regular graph $\Gamma$ 
has a determinantial expression (\cite{H}) 
\begin{equation} \label{zeta1}
\zeta_\Gamma (t) = \frac{(1-t^2)^{1-r(\Gamma)}}{\det(Id -tA + (k-1)t^2 Id)}
\end{equation} 
where $r(\Gamma)$ is the circuit rank of $\Gamma$ defined by 
$r(\Gamma)=\chi(\Gamma)+c$, 
where $\chi(\Gamma)=e-n$ is the Euler characteristic ($e$ is the number of edges and $n$ the number of vertices, and $c$ the number of connected component of $\Gamma$) and 
$A$ is the adjacency matrix of $\Gamma$. 
Also, by \cite{Coo}, if $\Gamma$ is connected, non-bipartite and has minimal degree at least 2 then $1-t^2$ does not divide the determinant $\det(Id -tA- (k-1)t^2 Id)$.
By Ihara's result,  $\zeta_{\Gamma}(t)$ and $\zeta_{\bar \Gamma}(t)$ satisfy the Riemann hypothesis for all $\Gamma \in \mathcal{G}_2 \cup \mathcal{G}_3 \cup \mathcal{G}_4$ and all $\bar \Gamma=\bar \Gamma_{q,m}(\ell)$.

For a connected $k$-regular graph we have that $1-r(\Gamma) = n-e =n-\frac{nk}2 <0$. 
Further, $A$ is conjugated to the diagonal matrix $diag(\lambda_1, \lambda_2, \ldots, \lambda_n)$. Hence, if
	$$Q(s,t) = 1-st +(k-1)t^2 \in \Z[t],$$ 
then, by \eqref{zeta1}, the reciprocal of $\zeta_{\Gamma}(u)$ is an integral polynomial of degree $2e=kn$ given by
\begin{equation} \label{zeta2}
{\zeta_\Gamma (t)}^{-1} = (1-t^2)^{e-n} \prod_{1\le i \le n} Q(\lambda_i,t) \in \Z[t].
\end{equation} 
Note that $Q(k,1)=0$ and hence $Q(k,t)= (t-1)((k-1)t-1)$.
By the previous comments, the following result is automatic. 
\begin{prop} \label{ihara}
The Ihara zeta function of $\Gamma \in \mathcal{G}_{q,m} \cup \bar{\mathcal{G}}_{q,m}$ is given by
\begin{equation} \label{zeta3}
{\zeta_{\Gamma}(t)}^{-1} = (1-t^2)^{(\frac k2 -1)q^m} (t-1)((k-1)t-1) \, Q(\upsilon,t)^{m_\upsilon} \, Q(\mu,t)^{m_\mu}
\end{equation}
where $Spec(\Gamma) = \{ [k]^1, [\upsilon]^{m_\upsilon}, [\mu]^{m_\mu}\}$. 
\end{prop}

The \textit{complexity} $K(\Gamma)$ of a graph $\Gamma$ is the number of spanning trees of $\Gamma$. 
By Kirchhoff's formula, the complexity of the graphs $\Gamma \in \mathcal{G}_{q,m} \cup \bar{\mathcal{G}}_{q,m}$
is 
\begin{equation} \label{compl fla}
K(\Gamma) = \tfrac{1}{q^m} (k-\upsilon)^{m_\lambda}(k-\mu)^{m_\mu}.
\end{equation}
This invariant is also computable in terms of the Ihara zeta function of $\Gamma$.
By a result of Hashimoto (\cite{H}), we have 	
$$K(\Gamma) = \frac{1}{2^r \chi(\Gamma)}  \lim_{t \rightarrow 1} \frac{\zeta_\Gamma^{-1}(t)}{(1-t)^r}.$$
Thus, by \eqref{zeta3} and using that $Q(\lambda,1)= (k-\upsilon)$ and $Q(\mu,1)= (k-\mu)$ we get \eqref{compl fla}.

\begin{exam} \label{ex zetas}
	We now explicitly give the Ihara zeta functions for the Clebsch and the Brouwer-Haemers graphs and for their complements, and we also compute their complexities. We use Tables \ref{tabla f2} and \ref{tabla f3}.
	
	(\textit{i}) For the Clebsch graph $\Gamma = \Gamma_{2,4}(1)$ and its complement we have 
	\begin{align*}
	& \zeta_{\Gamma}^{-1}(t)  = (1-t^2)^{24} \, (1-5t+4t^2) \, (1-t+4t^2)^{10} \, (1+3t+4t^2)^{5}, \\[1mm] 
	& \zeta_{\bar \Gamma}^{-1}(t)  = (1-t^2)^{64} \, (1-10t+9t^2) \, (1-2t+9t^2)^{5} \, (1+2t+9t^2)^{10}.
	\end{align*}
	The complexities are given by
	$$	 K(\Gamma) = \tfrac{1}{16}(5-1)^{10}(5+3)^{5}= 2^{31}, \qquad 
	K(\bar \Gamma) = \tfrac{1}{16}(10-2)^{5}(10+2)^{10}= 2^{31} 3^{10}.$$ 

	(\textit{ii}) For the Brouwer-Haemers graph $\Gamma = \Gamma_{3,4}(1)$ and its complement we have
	\begin{align*}
	& \zeta_{\Gamma}^{-1}(t)      = (1-t^2)^{729} \, (1-20t+19t^2) \, (1-2t+19t^2)^{60} \, (1+7t+19t^2)^{20}, \\[1mm]
	& \zeta_{\bar \Gamma}^{-1}(t) = (1-t^2)^{2349} \, (1-60t+59t^2) \, (1-6t+59t^2)^{20} \, (1+3t+59t^2)^{60}.
	\end{align*}
	Thus, the complexities of the graphs are 
	$$	 K(\Gamma) = \tfrac{1}{81} 18^{60} 13^{20}= 2^{60} 3^{116} 13^{20}, \qquad 
K(\bar \Gamma) = \tfrac{1}{81} 54^{20} 63^{60}= 2^{20} 3^{174} 7^{60}.$$

\end{exam}

\end{document}